 %%%%%%%%%%%%%%%%%%%%%%%%%%%%%%%%%%%%%%%%%%%%%%%%%%%
\documentclass[leqno,11pt]{amsart}
\usepackage{amsfonts, amssymb, amsmath, amsthm, hyperref, color, float, enumerate, enumitem}
\usepackage{mathrsfs}
\usepackage{url}
\usepackage{graphicx}        %                               % when including figure files
\usepackage[bottom]{footmisc}% places footnotes at page bottom
\usepackage{esint}

\usepackage{newtxtext}       % 
\usepackage[varvw]{newtxmath}   
\makeatletter
% --- Mostrar solo Figure 1, Figure 2, ... ---
\makeatletter

\makeatother

% --- Caption: "Figure" en vez de "Fig." ---

% --- Referencias con autoref: usar "Figure" ---
\makeatletter

\makeatother

\makeindex             

%%%%%%%%%%%%%%%%%%%%%%%%%%%%%%%%%%%%%%%%%%%%%%%%%%%%%%%%%%%%%%%%%%%%%%%%%%%%%%%%%%%%%%%%%

% Definición de integral con barra inclinada

% Integral con barra inclinada (promedio)
% Define \intavg
\usepackage{scalerel}[2014/03/10]
\usepackage[usestackEOL]{stackengine}
\def\avgint{\,\ThisStyle{\ensurestackMath{%
    \stackinset{c}{0\LMpt}{c}{0\LMpt}{\SavedStyle-}{\SavedStyle\phantom{\int}}}%
    \setbox0=\hbox{$\SavedStyle\int\,$}\kern-\wd0}\int} 
  
 \theoremstyle{plain}
   \newtheorem{teo}{Theorem}
   
   \newtheorem{lemma}{Lemma}

   \newtheorem{defi}{Definition}
\theoremstyle{remark}
 \newtheorem{remark}{Remark}

\begin{document}

\title{On Tent Spaces for the Gaussian Measure}
% Use \titlerunning{Short Title} for an abbreviated version of
% your contribution title if the original one is too long
\author{Liliana Forzani}
\address{Facultad de Ingeniería Química and CONICET, Santiago del Estero 2829, Santa Fe, Argentina.}
\email{liliana.forzani@gmail.com}
\author{Roberto Scotto}
\address{Facultad de Ingeniería Química, Santiago del Estero 2829, Santa Fe, Argentina.}
 \email{roberto.scotto@gmail.com} 
\author{Wilfredo Urbina}
\address{Department of Mathematics and Actuarial Sciences, Roosevelt University, 430 S Michigan Ave. Chicago, IL 60605, USA. }
\email{wurbinaromero@roosevelt.edu} 
% Use \authorrunning{Short Title} for an abbreviated version of
% your contribution title if the original one is too long
%
% Use the package "url.sty" to avoid
% problems with special characters
% used in your e-mail or web address
%
 \subjclass{Primary 42B35; Secondary 42B25}

\keywords{Tent spaces, Gaussian measure, atomic decomposition, duality, area functions, Carleson measures.}

%\begin{flushright}
%\footnotesize\itshape
%To Carlos, whose passion for problems and learning is matched only by his enthusiasm for organizing gatherings that bring mathematics to life.
%\end{flushright}

\begin{abstract} Following the scheme of tent spaces in harmonic analysis developed by R. Coifman, Y. Meyer, and E. Stein in \cite{cms}, we succeed in doing so for the Gaussian setting. In \cite{MNP}, part of this theory (an atomic decomposition) is developed for a specific tent space where functions are defined just in a proper subset of $\mathbb{R}^{n+1}_+,$ and without the use of an area function. In the present paper, using a variation of the area function considered in \cite{FSU}, we define the Gaussian area function and Gaussian tent spaces and prove both their atomic decompositions and the characterization of their dual spaces. Some applications are also considered.
\end{abstract}
 \maketitle

\section{Introduction}
\label{sec:1}

In 1985, R. Coifman, Y. Meyer, and E. Stein in their paper {\em Some New Function Spaces and Their Applications to Harmonic Analysis} \cite{cms} introduced the (classical)  tent spaces as a class of functions especially adapted to the study of singular integral operators. The theory of tent spaces has become a fundamental tool in harmonic analysis, both for the study of singular integrals and for Hardy spaces. In their seminal work, the authors developed a rich atomic theory in the Euclidean case, providing insights into the structure and a dual characterization of these function spaces.

In 2012, J. Maas, J. Van Nerveen and P. Portal in \cite{MNP}, 
introduced local Gaussian tent spaces, that is, tent spaces for the Gaussian measure whose functions are defined only on a proper subset of $\mathbb{R}^{n+1}_+$,  
and proved an atomic decomposition for elements of some of these local tent spaces. 
In fact, their approach was restricted to functions defined in the truncated domain \( D = \{(x,t)\in \mathbb{R}^{n+1}_+ : t < \min (1, 1/|x|)\} \), relying on ad hoc constructions rather than the explicit use of an area function. 
Due to the nature of the Gaussian measure, they introduced a sui generis concept of the Gaussian Whitney set, where the Whitney covering lemma can be applied in this setting.

However, the definition of an area function for the Gaussian measure has been
somewhat problematic, due to the lack of a good definition of
\emph{cone region} in this context. In 1994, L. Forzani and E.
Fabes, see \cite{for} and also \cite{FSU}, a Gaussian area function was defined by choosing a {\em pencil-type zone} as a possible {\em Gaussian cone}. Furthermore, the $L^p(\gamma)$ boundedness of that operator was proved.

In this paper, we define an area function equivalent to the one in \cite{FSU}, and use it to define Gaussian tent spaces that resemble the classical harmonic approach developed in \cite{cms}. With that, we lift the support restriction imposed in \cite{MNP}.

We also introduce Gaussian Carleson measures, which are essential in describing the dual of the endpoint tent space \( T^{1,\infty}_{\alpha, \beta}(\gamma) \), and establish duality results for general spaces \( T^{p,q}_{\alpha, \beta}(\gamma) \).  
 
The paper is organized as follows. In Section \ref{prelim}, we introduce the preliminary definitions and properties of Gaussian tent spaces, including the definitions of cutoff function, admissible ball, Gaussian cone and Gaussian tent, area function, atom and Carleson measure. Section \ref{main} outlines the main results developed in this paper: atomic decomposition, immersion of tent spaces in the classical Gaussian function spaces in $\mathbb{R}^n$, the theory of duality in this context, and the independence of tent spaces from the aperture of the cones and the position of the cutoff function. In Section \ref{conclusiones} we give a summary of the achievements obtained and specify some open problems to develop. Section 5 proves the claims made in Section 2 and Sections 6, 7, and 8 deal with the technical proofs of the core of the paper, i.e. the atomic decomposition of the space $T^{1,q}_{\alpha, \beta}(\gamma)$, $1\le q\le \infty,$ an immersion theorem, which is considered as an application of this topic, the dual space characterization of each Gaussian tent space, and finally the independence of the spaces \( T^{p,q}_{\alpha, \beta}(\gamma) \) of $\alpha$ and $\beta$.

Throughout the paper, we adopt the following notation. For $y\in\mathbb{R}^n$, set
\[
\gamma(y)=e^{-|y|^2},\qquad d\gamma(y)=\gamma(y)\,dy.
\]

\noindent For $c_B\in\mathbb{R}^n$ and $r_B>0$, let
$ 
B=B(c_B,r_B)=\{y\in\mathbb{R}^n:\ |y-c_B|<r_B\},$  $
\overline{B}=\{y\in\mathbb{R}^n:\ |y-c_B|\le r_B\}$ and
$B^c=\mathbb{R}^n\setminus B,
$ 
and write $B(a,r)$ for the open ball of center $a$ and radius $r$. Here, $|\cdot|$ denotes the Euclidean norm on $\mathbb{R}^n$ ($n\ge1$). If $E\subset\mathbb{R}^n$ is Lebesgue measurable, we set
\[
\gamma(E):=\int_{\mathbb{R}^n}\mathcal{X}_E(y)\,d\gamma(y)=\int_{\mathbb{R}^n}\mathcal{X}_E(y)\,e^{-|y|^2}\,dy.
\]
If $E$ is a subset of a topological space $X$, we denote by $\mathring{E}$ its interior, i.e., the largest open set contained in $E$, and with $\overline{E}$ we denote the closure of $E,$ i.e., the smallest closed set that contains $E$.

\noindent We denote by $\mathscr{C}(\mathbb{R}^{n+1}_+)$ the space of continuous functions on $\mathbb{R}^{n+1}_+$ and $\mathscr{C}_c(\mathbb{R}^{n+1}_+)$ for the subset of $\mathscr{C}(\mathbb{R}^{n+1}_+)$ with compact support.
 We write $\mathscr{C}_0(\mathbb{R}^{n+1}_+)$ for the space of continuous functions on $\mathbb{R}^{n+1}_+$ that vanish at infinity. 
\noindent We also use $\mathscr{C}^1(\mathbb{R}^n)$ for continuously differentiable functions on $\mathbb{R}^n$ and the analogous
$\mathscr{C}_c^1(\mathbb{R}^{n}).$

\noindent For a measurable function $f:\mathbb{R}^n\to\mathbb{R}$, we denote by $\widehat{f}$ its Fourier transform.
And for $1\le p<\infty$, $L^p(\gamma)$ is the space of (equivalence classes of) measurable real-valued functions $f$ in $\mathbb{R}^n$ such that $|f(y)|^p$ is integrable with respect to the Gaussian measure. 
 Since $\gamma$ is absolutely continuous with respect to the Lebesgue measure and viceversa, $L^\infty(\gamma)$ coincides with the classical $L^\infty(\mathbb{R}^n,dy)$.

\noindent For $1\le p<\infty$, $L^p(\mathbb{R}^{n+1}_+,d\gamma\,\tfrac{dt}{t})$ denotes the space of measurable functions $f$ defined in $\mathbb{R}^{n+1}_+,$ such that
\[
\iint_{\mathbb{R}^{n+1}_+} |f(y,t)|^p\, d\gamma(y)\,\frac{dt}{t}<\infty.
\]
We also set $L^p_c(\mathbb{R}^{n+1}_+,d\gamma\,\tfrac{dt}{t})$ for those measurable $f$ with compact support $K\subset\mathbb{R}^{n+1}_+$ 
when we replace $\mathbb{R}^{n+1}_+$ by $K$ in the above integral.
For $p=\infty$, we will use $L^\infty(\mathbb{R}^{n+1}_+,d\gamma\,dt)$; since $d\gamma(y)\,dt$ and $dy\,dt$ are equivalent measures in $\mathbb{R}^{n+1}_+$, this space is in agreement with $L^\infty(\mathbb{R}^{n+1}_+,dy\,dt)$.

For $a,b\in\mathbb{R}$, we set $a\wedge b=\min(a,b)$ and $a\vee b=\max(a,b)$, with $a\wedge(+\infty)=a$. For real-valued functions $f,g$ with common domain we define pointwise
\[
(f\wedge g)(x)=f(x)\wedge g(x),\qquad (f\vee g)(x)=f(x)\vee g(x).
\]
Finally, $X\lesssim Y$ means that $X\le C\,Y$ for some absolute constant $C>0$; when the constant depends on parameters $\alpha$, we write $X\lesssim_\alpha Y$.

\section{Preliminaries}\label{prelim}
In this preliminary section, we first present the key notions and results that lead to an atomic decomposition of certain Gaussian tent spaces, and then introduce Gaussian Carleson measures, which are essential for understanding the duality theory associated with one of these spaces.

\subsection{Area function and Tent spaces}
In this section, we gather the main definitions and preliminary notions needed to later develop an atomic decomposition of tent spaces, as well as a complete characterization of dual spaces in the Gaussian setting.
First, in Definition~\ref{cutting}, we define admissible balls in this context. In Definition~\ref{edison}, we introduce Gaussian tent spaces, which will be proven to be Banach spaces in the Lemma~\ref{Bannach}.
To define Gaussian tent spaces, we need the concept of an area function in this context (Definition~\ref{area}) that, in turn, involves the concept of a Gaussian cone whose definition can be found in
Definition~\ref{tiendas}. We also define Gaussian tents that are the natural sets in which the supports of the Gaussian atoms 
are contained.
  At the same time, we compare the Gaussian tents with the classical ones (Lemma~\ref{GaussTent}).
  
  Finally, in Definition \ref{ATOMOS}, we define atoms that are the building blocks of an atomic decomposition.

\begin{defi}\label{cutting} Let $m$ be a cutoff function defined as follows
\[m(x)=\left\{\begin{array}{ccl}
     1 & \textrm{ if} & |x|\leq 1; \\
     \frac{1}{|x|} & \textrm{ if} & |x|\geq 1.
\end{array}\right.\] Therefore, $m(x)=1\wedge \frac{1}{|x|}.$

For $\beta>0$, let us define $m_\beta (y)=\beta m(y),$ and consider the family $\mathscr{B}_\beta$ consisting of \textit{admissible balls at level $\beta$}, that is, balls of the form $B=B(c_B,r_B)$ with $r_B\leq  m_\beta (c_B)$. For simplicity, we may write $\mathscr{B}=\mathscr{B}_1$.
\end{defi}

{
In the proof of some theorems, we are going to use
the following maximal functions:
The centered Gaussian Hardy-Littlewood maximal function on $\mathscr{B}_{\lambda},$ $\lambda>0$ defined as
$$\mathcal{M}^c_{\lambda} f (x):=\sup_{0<r<m_{\lambda}(x)}\frac{1}{\gamma(B(x,r))}\int_{B(x,r)} |f(y)|\, d\gamma (y),$$ 
and the  non-centered Gaussian Hardy-Littlewood maximal function in $\mathscr{B}_\lambda$ defined as
 $$\mathcal{M}_\lambda f(x)=\underset{B\in \mathscr{B}_\lambda: x\in B}{\sup}\frac{1}{\gamma(B)}\int_B |f(y)|\, d\gamma(y).$$

 \begin{remark}\label{wilfredo}
Clearly, the centered maximal function is bounded by the non-centered one. The non-centered Gaussian Hardy-Littlewood maximal function (and therefore the centered one) is of weak type $(1,1)$ with respect to the Gaussian measure and bounded on $L^p (\gamma),$ for $ 1<p\le \infty$ (see \cite{ur}).
 \end{remark}

\begin{defi}\label{tiendas} (See Figure \ref{uno}) Given $\alpha>0,$ $\beta>0,$ and $x\in \mathbb{R}^n$ we define two types of  \emph{Gaussian cones} with vertex at $x$, and aperture $\alpha,$
\begin{equation}\label{GaussCone1}
\Gamma_\gamma^{\alpha,\beta}(x)=\left\{(y,t)\in \mathbb{R}^{n+1}_+: |y-x|<\alpha t\wedge m_\beta(y)\right\},
\end{equation}
when $\alpha=\beta=1$, we simply write $\Gamma_\gamma(x):=\Gamma_\gamma^{1,1}(x);$ and
\begin{equation}\label{GaussCone2}
\widetilde{\Gamma}_{\gamma}^{\alpha, \beta}(x):=\left\{(y,t)\in \mathbb{R}^{n+1}_+: |y-x|<\alpha t\wedge m_\beta (x)\right\},
\end{equation}
and when $\alpha=\beta=1$, we simply write $\widetilde{\Gamma}_{\gamma}(x):=\widetilde{\Gamma}_{\gamma}^ {1,1}(x).$

\begin{figure}
\begin{center}
\includegraphics[scale=1]{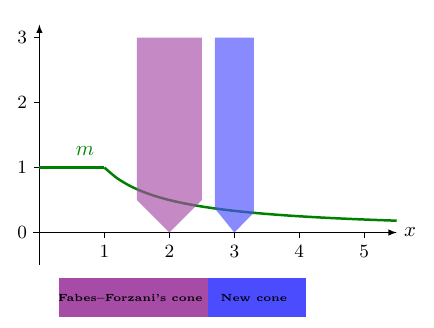}
\caption{Fabes-Forzani's cone $\widetilde{\Gamma}_\gamma(2)$ vs new cone $\Gamma_\gamma(3)$}
\label{uno}
\end{center}
\end{figure}

For a subset $F\subseteq \mathbb{R}^n$ we define
\begin{equation}
R^{\alpha,\beta}_\gamma(F)=\bigcup_{x\in F} \Gamma_\gamma^{\alpha,\beta}(x)=\bigcup_{x\in F}\left\{(y,t)\in \mathbb{R}^{n+1}_+: |y-x|<\alpha t\wedge m_\beta(y)\right\},
\end{equation}
and for $O$ another subset of $\mathbb{R}^n,$ we define the \emph{Gaussian tent over} $O$ \emph{with aperture} $\alpha$ as
\begin{eqnarray}
 \nonumber  T^{\alpha, \beta}_\gamma(O)&=&\left(R^{\alpha, \beta}_\gamma (O^c)\right)^c\\
   \nonumber    &=&\bigcap_{x\in O^c}\left\{(y,t)\in \mathbb{R}^{n+1}_+: |y-x|\geq\alpha t\wedge m_\beta (y)\right\}\\
    &=&\left\{(y,t)\in \mathbb{R}^{n+1}_+: \hbox{dist}(y,O^c)\geq\alpha t\wedge m_\beta (y)\right\}.
\end{eqnarray}
Whenever $\alpha=\beta=1$ we omit the superscripts written in the Gaussian tent. 

\noindent In most cases, $F$ will be closed and $O$ will be open subsets of $\mathbb{R}^n.$ See Figures \ref{dos} and \ref{tres} with $O=B(c_B, r_B),$ $c_B=2, m(c_B)=.5, r_B=.49,$ for figure \ref{dos} and $r_B=.6$ for figure \ref{tres}.
\end{defi}

Now, let us turn our attention to the comparison between a Gaussian tent and that one studied in the classical harmonic context. The following lemma shows that it is the same tent as the classical one as long as we take a $\beta$-admissible ball as the basis upon which the tent is set up.
\begin{lemma}\label{GaussTent}
Given $B\in \mathscr{B}_\beta,$ let us define $q_B$ the point in $\bar{B}$ such that $|q_B|=\inf_{y\in B}|y|,$ and assume that $\beta\ge 1$ and $|q_B|\ge \sqrt{\beta}.$ Then $$T_\gamma^{\alpha, \beta}(B)\subset D_{\alpha, \beta}\cup (\{c_B\}\times (0, \infty)),$$ where $D_{\alpha,\beta}=\{(y,t)\in \mathbb{R}^{n+1}_+: \alpha t<m_\beta(y)\}.$ Moreover,
\begin{equation*}
T_\gamma^{\alpha, \beta}(B)=
\left\{\begin{array}{ccl}
T^\alpha(B) & \text{if} & r_B<m_\beta(c_B)\\
T^\alpha (B)\cup (\{c_B\}\times (0, \infty)) & \text{if} & r_B=m_\beta(c_B)
\end{array}\right. .
\end{equation*}
where  $T^\alpha(B)=\{(y,t)\in \mathbb{R}^{n+1}_+:\text{dist}(y,B^c)\ge \alpha t\}$ is the classical tent over $B.$
\end{lemma}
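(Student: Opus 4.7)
The whole lemma reduces to a single sharp pointwise comparison: for every $y\in B$,
\[
\dist(y,B^c)\;=\;r_B-|y-c_B|\;\le\;m_\beta(y),
\]
with equality occurring only in the borderline case $y=c_B$ and $r_B=m_\beta(c_B)$. Once this comparison is in hand, both assertions of the lemma follow from the definition $T_\gamma^{\alpha,\beta}(B)=\{(y,t):\dist(y,B^c)\ge \alpha t\wedge m_\beta(y)\}$ by elementary case analysis: outside the exceptional point, strict inequality forces $\alpha t\wedge m_\beta(y)=\alpha t$ on the Gaussian tent, which is precisely the defining condition for the classical tent.

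To prove the comparison, I would first observe that the hypothesis $|q_B|\ge\sqrt\beta\ge 1$ implies $|y|\ge\sqrt\beta\ge 1$ for every $y\in\overline{B}$, so $m_\beta(y)=\beta/|y|$; in particular $m_\beta(c_B)=\beta/|c_B|$ and the admissibility condition reads $r_B\le \beta/|c_B|$. The comparison then follows by chaining
\[
r_B-|y-c_B|\;\le\;\frac{\beta}{|c_B|}-|y-c_B|\;\le\;\frac{\beta}{|y|}=m_\beta(y),
\]
where the second inequality is equivalent to $\beta(|y|-|c_B|)\le |c_B||y|\cdot |y-c_B|$ and follows from the triangle inequality $|y|-|c_B|\le |y-c_B|$ together with $|c_B||y|\ge\beta$. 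For the sharpness clause, the first inequality is strict unless $r_B=m_\beta(c_B)$, while equality in the second forces $|y|-|c_B|=|y-c_B|$ and $|c_B||y|=\beta$; the latter combined with $|c_B|,|y|\ge\sqrt\beta$ pins down $|c_B|=|y|=\sqrt\beta$, which then forces $|y-c_B|=0$, i.e., $y=c_B$.

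With the comparison established, the first inclusion is immediate: for $(y,t)\in T_\gamma^{\alpha,\beta}(B)$ with $y\ne c_B$, one has $\dist(y,B^c)<m_\beta(y)$ strictly, so necessarily $\alpha t\wedge m_\beta(y)=\alpha t$ (otherwise $\dist(y,B^c)\ge m_\beta(y)$ would contradict strict inequality), and in particular $\alpha t<m_\beta(y)$, placing $(y,t)$ in $D_{\alpha,\beta}$. For the second assertion, if $r_B<m_\beta(c_B)$ the comparison is strict for every $y\in B$, so the same reasoning applied pointwise gives $T_\gamma^{\alpha,\beta}(B)=T^\alpha(B)$. If $r_B=m_\beta(c_B)$, the argument still identifies the Gaussian and classical tents away from the center, while at $y=c_B$ the equality $\dist(c_B,B^c)=r_B=m_\beta(c_B)$ makes the Gaussian tent condition $r_B\ge \alpha t\wedge m_\beta(c_B)$ trivially satisfied for every $t>0$, contributing the entire vertical ray $\{c_B\}\times(0,\infty)$.

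The main technical obstacle I anticipate is the sharpness clause in the comparison: one must carefully track equality through the chain, and in particular use the hypothesis $|q_B|\ge\sqrt\beta$ in an essential way to prevent the boundary case $|c_B||y|=\beta$ from occurring for any $y\ne c_B$. Without this hypothesis one could produce points where $\dist(y,B^c)=m_\beta(y)$ away from the center, which would destroy the clean identification with the classical tent.
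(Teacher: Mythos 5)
Your proof is correct and follows essentially the same route as the paper's: both arguments reduce to the pointwise comparison $\dist(y,B^c)=r_B-|y-c_B|\le m_\beta(y)$ on $B$, with equality only at $y=c_B$ and $r_B=m_\beta(c_B)$, obtained from the admissibility bound $r_B\le\beta/|c_B|$ together with $|c_B|\,|y|\ge\beta$ (which the paper packages as monotonicity of $s\mapsto s\pm\beta/s$ on $[\sqrt{\beta},\infty)$). The only difference is presentational: you prove the non-strict comparison and track the equality case, while the paper argues by contradiction from a tent point outside $D_{\alpha,\beta}$.
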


\noindent The proof can be found in Section~\ref{LASTENT}.

 \begin{figure}
\begin{center}
\includegraphics[scale=1.7]{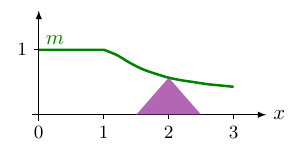}
\caption{Gaussian tent for $r_B < m(c_B)$ }
\label{dos}
\end{center}
\end{figure}

\begin{figure}
\begin{center}
\includegraphics[scale=1.7]{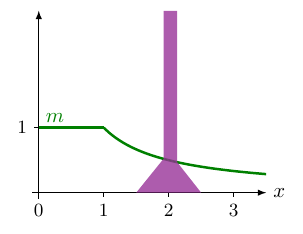}
\caption{Gaussian tent for $r_B \ge m(c_B)$ }
\label{tres}
\end{center}
\end{figure}

\begin{defi}\label{area}
Given a measurable function $f:\mathbb{R}^{n+1}_+\to \mathbb{R},$ for $\alpha, \beta >0,$ and $1\le q<\infty,$ let us consider the following area function with aperture $\alpha>0$, 
\begin{equation}\label{area4}
S_{q,\alpha, \beta}f(x)=\left(\iint_{\Gamma_\gamma^{\alpha, \beta}(x)}\frac{|f(y,t)|^q}{\gamma(B(y,\alpha t\wedge m_\beta(y)))}\,d\gamma(y)\,\frac{dt}{t}\right)^{1/q}.
\end{equation}
For $q=\infty,$ we consider
\begin{equation}
S_{\infty, \alpha, \beta}f(x)=\sup\{|f(y,t)|: (y,t)\in \Gamma_\gamma^{\alpha, \beta}(x)\},
\end{equation}
here we take $f\in \mathscr{C}(\mathbb{R}^{n+1}_+).$
Another function which we will be dealing with is
\begin{equation}
C_{q,\alpha, \beta}f(x)= \sup_{B: \; x\in B(c_B, \alpha r_B\wedge m_\beta(c_B)) }\left\{
\left(\frac{1}{\gamma(B)}\iint_{T_{\gamma}^{\alpha,\beta}(B)}|f(y.t)|^q\, d\gamma(y)\frac{dt}{t}\right)^{1/q}  \right\}.
\end{equation}
\end{defi}
We leave out the subindices $\alpha$ and $\beta$ whenever they are equal to one.
 
\begin{remark}
In \cite{FSU}, we studied the $L^p(\gamma)$-boundedness, $1<p<\infty,$ of the area function
$$\tilde{S}_2\, g(x)=\left(\iint_{\widetilde{\Gamma}_\gamma(x)}|t\nabla P^tg(y)|^2  (t^{-n}\vee |x|^n\vee 1)\,dy\, \frac{dt}{t}\right)^{1/2},$$ where $P^t$ represents the Gaussian Poisson semigroup and $\nabla$ stands for the gradient operator.  
It is immediate to prove that, pointwise, the area function $\tilde{S}_2 \, g$ is between $S_{2,1/2,1}f$ and $S_{2,2,1}f$ for $f=|t \nabla P^t g|.$
\end{remark}
 
In the proof of an atomic decomposition, we need some kind of continuity of the area function. For that, the semi-continuity of such a function is proved in the next lemma.

\begin{lemma}\label{continuity}
For every measurable function $f$ defined in $\mathbb{R}^{n+1}_+,$ the functions $S_{q,\alpha, \beta, } f,$ for $1\le q<\infty$ and $C_{q, \alpha, \beta}f,$ for $1<q<\infty$ are lower semi-continuous. If $f$ is further continuous, then $S_{\infty, \alpha, \beta}f$ turns out to be also lower semi-continuous.
\end{lemma}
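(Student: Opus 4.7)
My plan is to reduce all three assertions to a single topological observation: the Gaussian cone $\Gamma_\gamma^{\alpha,\beta}(x)$ is cut out by the strict inequality $|y-x|<\alpha t\wedge m_\beta(y)$, whose right-hand side is independent of $x$. Therefore, for any $(y,t)\in\Gamma_\gamma^{\alpha,\beta}(x)$ and any sequence $x_k\to x$, the point $(y,t)$ lies in $\Gamma_\gamma^{\alpha,\beta}(x_k)$ for all sufficiently large $k$, i.e.
\[
\chi_{\Gamma_\gamma^{\alpha,\beta}(x)}(y,t)\ \le\ \liminf_{k\to\infty}\chi_{\Gamma_\gamma^{\alpha,\beta}(x_k)}(y,t)\qquad\text{for every }(y,t)\in\mathbb{R}^{n+1}_+.
\]
Once this is in hand, the three cases will follow by routine arguments, and since $\mathbb{R}^n$ is a metric space, sequential lower semi-continuity is enough.

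For $S_{q,\alpha,\beta}f$ with $1\le q<\infty$ I would rewrite the defining integral as one over $\mathbb{R}^{n+1}_+$ against the indicator $\chi_{\Gamma_\gamma^{\alpha,\beta}(x_k)}$, note that the weight $|f(y,t)|^q/\gamma(B(y,\alpha t\wedge m_\beta(y)))$ is nonnegative and does not depend on $x$, and apply Fatou's lemma with respect to the $\sigma$-finite positive measure $d\gamma(y)\,dt/t$ combined with the pointwise inequality above; this yields $(S_{q,\alpha,\beta}f(x))^q\le\liminf_k(S_{q,\alpha,\beta}f(x_k))^q$, from which lower semi-continuity follows by taking $q$-th roots. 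For the endpoint case $S_{\infty,\alpha,\beta}f$ with $f\in\mathscr{C}(\mathbb{R}^{n+1}_+)$, I would instead fix any $(y,t)\in\Gamma_\gamma^{\alpha,\beta}(x)$, use the opening observation to see that $(y,t)\in\Gamma_\gamma^{\alpha,\beta}(x_k)$ for large $k$ and hence $|f(y,t)|\le S_{\infty,\alpha,\beta}f(x_k)$, and then pass to $\liminf_k$ and to the supremum over $(y,t)\in\Gamma_\gamma^{\alpha,\beta}(x)$.

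For $C_{q,\alpha,\beta}f$ with $1<q<\infty$ the plan is the same, but applied to the outer supremum rather than the cone: the admissibility condition $x\in B(c_B,\alpha r_B\wedge m_\beta(c_B))$ is itself a strict, hence open, condition in $x$, so that if a ball $B$ is admissible in the supremum defining $C_{q,\alpha,\beta}f(x)$, then $B$ remains admissible for $x_k$ for all large $k$; since the inner quantity
\[
\frac{1}{\gamma(B)}\iint_{T_\gamma^{\alpha,\beta}(B)}|f(y,t)|^q\,d\gamma(y)\,\frac{dt}{t}
\]
does not depend on the vertex, it is a lower bound for $C_{q,\alpha,\beta}f(x_k)$ for those $k$, and taking $\liminf_k$ followed by the supremum over admissible $B$ finishes the case. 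I expect no genuine obstacle; the only points requiring care are verifying that every defining inequality is strict, so that the relevant subsets are open in the vertex variable $x$, and confirming that continuity of $f$ is needed only in the $S_\infty$ case to make the pointwise supremum well-behaved. No hard harmonic analysis enters.
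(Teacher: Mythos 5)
Your argument is correct, and the $q=\infty$ and $C_{q,\alpha,\beta}$ cases coincide with what the paper does: in both, one observes that membership of a fixed $(y,t)$ in $\Gamma_\gamma^{\alpha,\beta}(x)$ (respectively, admissibility of a fixed ball $B$ for the vertex $x$) is a strict, hence open, condition in $x$, and that the quantity being maximized does not depend on $x$. The only genuine difference is in the case $1\le q<\infty$: the paper introduces the truncated cones $\Gamma_{\gamma,h}^{\alpha,\beta}(x)=\{(y,t):|y-x|<\alpha(t-h)\wedge(m_\beta(y)-\alpha h)\}$, picks $h_0$ so that the integral over $\Gamma_{\gamma,h_0}^{\alpha,\beta}(x)$ already exceeds $\lambda$, and then verifies the uniform inclusion $\Gamma_{\gamma,h_0}^{\alpha,\beta}(x)\subset\Gamma_\gamma^{\alpha,\beta}(z)$ for all $z\in B(x,\alpha h_0)$, thereby exhibiting an explicit open neighborhood inside the superlevel set. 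You instead apply Fatou's lemma to the pointwise inequality $\chi_{\Gamma_\gamma^{\alpha,\beta}(x)}\le\liminf_k\chi_{\Gamma_\gamma^{\alpha,\beta}(x_k)}$ and use the sequential characterization of lower semi-continuity on the metric space $\mathbb{R}^n$. Your route is slightly more economical (no auxiliary cone family, no uniformity to check), while the paper's yields quantitative information (an explicit radius $\alpha h_0$ for the neighborhood) that is of the same flavor as estimates used elsewhere in the paper; both rest on exactly the same geometric fact. One small remark: as you suspect, continuity of $f$ plays no role in your $S_{\infty,\alpha,\beta}$ argument (nor in the paper's); it is only there so that the pointwise supremum is meaningful for a genuine function rather than an a.e.\ equivalence class.
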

\noindent  The proof can be found in Section \ref{falta}.

Now, let us turn to the definition of the Gaussian tent space.

\begin{defi}\label{edison}  Given $ \alpha, \beta >0,$ $1 \leq p <\infty$ and $1 \leq q< \infty$, the Gaussian tent space $T^{p,q}_{\alpha, \beta} (\gamma)$ is the set of measurable functions $f:{\mathbb R}^{n+1}_+\to \mathbb{R}$ such that $S_{q,\alpha, \beta}f\in L^p(\gamma)$, i.e.,
$$T^{p,q}_{\alpha, \beta} (\gamma)=\left\{f:{\mathbb R}^{n+1}_+\to \mathbb{R}:\  S_{q,\alpha, \beta}f\in L^p(\gamma)\right\},$$
 with norm
\[\|f\|_{T_{\alpha,\beta}^{p,q}(\gamma)}:=\|S_{q,\alpha, \beta}f\|_{L^p(\gamma)}.\]
For $q=\infty,$ then \[T^{p,\infty}_{\alpha,\beta}(\gamma)=\{f\in \mathscr{C}(\mathbb{R}_+^{n+1}):\  S_{\infty, \alpha,\beta}f\in L^p(\gamma) 
\},\]
with norm
\[\| f\|_{T^{p, \infty}_{\alpha, \beta}(\gamma)}:=\|S_{\infty, \alpha,\beta}\, f\|_{L^p(\gamma)}.\]
Finally,
for $p=\infty$ and $1< q<\infty$ we have
{
\[ T_{\alpha, \beta}^{\infty, q}(\gamma)=\{f:\mathbb{R}^{n+1}_+\to \mathbb{R}:\  C_{q, \alpha, \beta}f\in L^\infty(\gamma)\},\]}
{with norm $$\|f\|_{T^{\infty, q}_{\alpha, \beta}(\gamma)}:=\|C_{q,\alpha,\beta}\, f\|_{L^\infty(\gamma)}.$$}
Whenever $\alpha=\beta=1$ we omit the subscripts written on the Gaussian tent space.
\end{defi}

\begin{remark}It can be seen immediately that the space $T^{p,p}_{\alpha, \beta}(\gamma)=L^p(\mathbb{R}^{n+1}_+, d\gamma(y)\frac{dt}{t}),$ for $1\le p<\infty.$ 
\end{remark}

We next establish the completeness of the Gaussian tent spaces with respect to their natural norms.

\begin{lemma}\label{Bannach}
For every $1\le p<\infty,$ $1\le q\le \infty,$ and $\alpha,\beta>0,$ the tent space $T_{\alpha, \beta}^{p,q}(\gamma)$ turns out to be a Banach space with the norm $\|\cdot\|_{T_{\alpha, \beta}^{p,q}(\gamma)}.$
For $p=\infty$ and $1< q <\infty$ the tent space $T^{\infty, q}_{\alpha, \beta}(\gamma)$ is also a Banach space.
\end{lemma}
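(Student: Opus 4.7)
The plan is to first check the norm axioms and then prove sequential completeness via the standard dyadic subsequence. Homogeneity is immediate; the triangle inequality follows from Minkowski in $L^q(d\gamma(y)\,dt/t)$ applied inside the integral defining $S_{q,\alpha,\beta}f$ (or subadditivity of $\sup$ when $q=\infty$) followed by Minkowski in $L^p(\gamma)$, and the parallel step works for $C_{q,\alpha,\beta}$ in the $p=\infty$ case. For definiteness, the characterization $(y,t)\in \Gamma_\gamma^{\alpha,\beta}(x)\iff x\in B(y,\alpha t\wedge m_\beta(y))$ combined with Fubini yields the identity
\[
\int_{\mathbb{R}^n}\bigl(S_{q,\alpha,\beta}f(x)\bigr)^q d\gamma(x) \;=\; \iint_{\mathbb{R}^{n+1}_+}|f(y,t)|^q\,d\gamma(y)\,\frac{dt}{t},
\]
since the weight $\gamma(B(y,\alpha t\wedge m_\beta(y)))$ in the denominator of $S_{q,\alpha,\beta}$ cancels exactly the $x$-mass of the ball. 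Hence $S_{q,\alpha,\beta}f=0$ $\gamma$-a.e.\ forces $f=0$ $d\gamma(y)\,dt/t$-a.e. The same Fubini calculation with Gaussian tents in place of cones handles $T^{\infty,q}_{\alpha,\beta}(\gamma)$, and for $T^{p,\infty}_{\alpha,\beta}(\gamma)$ the continuity of $f$ makes $\{f\ne 0\}$ open, hence empty once null.

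For completeness, take a Cauchy sequence $\{f_n\}\subset T^{p,q}_{\alpha,\beta}(\gamma)$, extract a subsequence with $\|f_{n_{k+1}}-f_{n_k}\|_{T^{p,q}_{\alpha,\beta}(\gamma)}<2^{-k}$, and set $g=\sum_{k=1}^\infty |f_{n_{k+1}}-f_{n_k}|$. Applying the triangle inequality to finite partial sums and monotone convergence inside $S_{q,\alpha,\beta}$ gives $\|g\|_{T^{p,q}_{\alpha,\beta}(\gamma)}\le 1$, so $S_{q,\alpha,\beta}g(x)<\infty$ for $\gamma$-a.e.\ $x$. A second Fubini swap then shows that $\{(y,t): g(y,t)=\infty\}$ is $d\gamma(y)\,dt/t$-null: each slice $\{x:(y,t)\in\Gamma_\gamma^{\alpha,\beta}(x)\}=B(y,\alpha t\wedge m_\beta(y))$ has positive $\gamma$-measure, so a positive-measure set of $(y,t)$ with $g=\infty$ would force $S_{q,\alpha,\beta}g=\infty$ on a positive-measure set of $x$. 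Hence $\sum_k|f_{n_{k+1}}(y,t)-f_{n_k}(y,t)|<\infty$ for a.e.\ $(y,t)$, and the subsequence converges a.e.\ to a measurable limit $f$.

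Norm convergence is then automatic: from $f-f_{n_k}=\sum_{j\ge k}(f_{n_{j+1}}-f_{n_j})$ a.e., subadditivity and monotone convergence give $\|f-f_{n_k}\|_{T^{p,q}_{\alpha,\beta}(\gamma)}\le 2^{-k+1}\to 0$, and the Cauchy property extends this to the original sequence; in particular $f\in T^{p,q}_{\alpha,\beta}(\gamma)$. For $q=\infty$ one must also transfer continuity to $f$: smallness in $L^p$ of $\sum_k S_{\infty,\alpha,\beta}(f_{n_{k+1}}-f_{n_k})$ upgrades to uniform convergence on each cone $\Gamma_\gamma^{\alpha,\beta}(x)$ for $x$ in a full-measure set, so $f$ is continuous on the open set obtained as the union of these cones, and uniqueness of the pointwise limit extends continuity to all of $\mathbb{R}^{n+1}_+$. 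The case $p=\infty$, $1<q<\infty$ is entirely parallel with $C_{q,\alpha,\beta}$ and the Gaussian tent $T_\gamma^{\alpha,\beta}(B)$ in place of $S_{q,\alpha,\beta}$ and $\Gamma_\gamma^{\alpha,\beta}(x)$. I expect the main obstacle to be the soft \emph{Fubini positivity} step in each variant: one has to identify, in each of the cone, tent, and supremum formulations, a pivot set of positive $\gamma$-measure (the ball $B(y,\alpha t\wedge m_\beta(y))$ in the cone version, or $B(c_B,\alpha r_B\wedge m_\beta(c_B))$ in the tent version) so that the swap of integration order is both legitimate and informative.
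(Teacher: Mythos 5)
Your proof is correct, but it follows a genuinely different route from the paper's. You run the classical Riesz--Fischer scheme directly on the tent-space norm: the norm axioms via the exact Fubini identity $\int_{\mathbb{R}^n}(S_{q,\alpha,\beta}f)^q\,d\gamma=\iint_{\mathbb{R}^{n+1}_+}|f|^q\,d\gamma\,\frac{dt}{t}$, then a rapidly Cauchy subsequence, the majorant $g=\sum_k|f_{n_{k+1}}-f_{n_k}|$, and the Fubini-positivity observation that a $d\gamma\,\frac{dt}{t}$-positive set where $g=\infty$ would force $S_{q,\alpha,\beta}g=\infty$ on a $\gamma$-positive set of vertices; your covering remark (every $(y,t)$ lies in $\Gamma_\gamma^{\alpha,\beta}(x)$ exactly for $x\in B(y,\alpha t\wedge m_\beta(y))$, a set of positive measure, so the cones over any full-measure set of vertices cover $\mathbb{R}^{n+1}_+$) is also exactly what is needed to transfer continuity to the limit when $q=\infty$. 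The paper instead first proves a density/localization lemma (Lemma~\ref{density}): $L^q_c(\mathbb{R}^{n+1}_+,d\gamma\,dt/t)\subset T^{p,q}_{\alpha,\beta}(\gamma)$ and, conversely, $\|f\mathcal{X}_K\|_{L^q(K,d\gamma\,dt/t)}\lesssim_K\|f\|_{T^{p,q}_{\alpha,\beta}(\gamma)}$ for every compact $K$; a Cauchy sequence is then Cauchy in each $L^q(K,d\gamma\,dt/t)$, the local limits are glued along an exhaustion, and a $4\epsilon$ argument finishes (with the tents $T_\gamma^{\alpha,\beta}(B(0,j))$ playing the role of the compacts when $p=\infty$). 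Your argument is more self-contained and elementary, and it yields a.e.\ pointwise convergence of a subsequence as a bonus; the paper's heavier route is justified because the localization estimate and the density of compactly supported functions are reused later (in the duality theorems), so completeness there comes essentially for free from machinery the paper must build anyway.
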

\noindent Its proof can be found in Section~\ref{proofbanach}.

We now define the Gaussian atoms that are the building blocks of the atomic decomposition in the Gaussian context.

\begin{defi}\label{ATOMOS}
For $1\le q\le \infty$, let $\alpha>0,$ $\beta>0$ and $\delta>0$ be given. A measurable function $a:\mathbb{R}^{n+1}_+\to \mathbb{R}$ is said to be a $T^{1,q}_{\alpha, \beta}(\gamma)$ $\delta$-atom if there exists a ball $B\in \mathscr{B}_{\delta}$ such that, for $1\le q<\infty,$
\begin{enumerate}
    \item \label{atomsupport}$\hbox{supp} (a)\subset T_\gamma^{\alpha, \beta} (B)$;
    \item \label{normatom}$\|a\|_{L^q\left(\mathbb{R}^{n+1}_+,d\gamma \, \frac{dt}{t}\right)}\leq \gamma(B)^{-(1-1/q)}$.  % $1\le q<\infty$
\end{enumerate}
For $q=\infty$, we maintain the item~\ref{atomsupport}, add the condition to the atom that it must be continuous, and item~\ref{normatom} becomes $\|a\|_{L^\infty(\mathbb{R}^{n+1}_+,d\gamma dt)}\le \gamma(B)^{-1}.$
\end{defi}

\begin{remark}
Relationship between the atoms obtained in \cite{MNP} and the ones presented here:
According to what we have defined we observe that the atoms in \cite{MNP} are the restriction of our atoms to the region $D=\{(y,t)\in \mathbb{R}^{n+1}_+: 0<t<m(y)\},$ which we may call the ``local" region in $\mathbb{R}_+^{n+1}.$ Thus the tent in \cite{MNP}, we will call it $T_\gamma^D(B)$ for $B$ an admissible ball, will be the intersection of our tent $T_\gamma(B)$ with the region $D.$
\end{remark}

As is common in this context of tent spaces, we are going to prove that these atoms are in the closed unit ball of the space $T^{1,q}_{\alpha, \beta}(\gamma).$

\begin{lemma}\label{NOSE}If $a$ is a $T^{1,q}_{\alpha, \beta}(\gamma)$ $\delta$-atom, then $a\in T^{1,q}_{\alpha, \beta} (\gamma)$ and
\[\|a\|_{T^{1,q}_{\alpha, \beta}(\gamma)}=\|S_{q,\alpha, \beta}\, a\|_{L^1(d\gamma)}\leq 1.\]
\end{lemma}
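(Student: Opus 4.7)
The plan is to show that the area-function integral of an atom is pointwise controlled by geometry and then use H\"older plus Fubini to gain the correct power of $\gamma(B)$. I would carry it out in three steps.

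\textbf{Step 1: Localize the support of $S_{q,\alpha,\beta}a$.} First I would show that $S_{q,\alpha,\beta}a$ vanishes off $B$. Suppose $x\notin B$ and $(y,t)\in\Gamma_\gamma^{\alpha,\beta}(x)$. Then $|y-x|<\alpha t\wedge m_\beta(y)$. Since $x\in B^c$, this gives $\mathrm{dist}(y,B^c)<\alpha t\wedge m_\beta(y)$, which by the last formula in Definition~\ref{tiendas} means $(y,t)\notin T_\gamma^{\alpha,\beta}(B)$. By the support condition (i) on the atom, $a(y,t)=0$ throughout $\Gamma_\gamma^{\alpha,\beta}(x)$, so $S_{q,\alpha,\beta}a(x)=0$. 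Thus $S_{q,\alpha,\beta}a$ is supported in $B$, regardless of whether $B$ is $\delta$-admissible, since only the tent definition is used.

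\textbf{Step 2: The case $1\le q<\infty$.} On $B$, by H\"older's inequality with exponents $q/(q-1)$ and $q$ (with the convention $\gamma(B)^{1-1/q}=1$ when $q=1$),
\[
\int_{\mathbb{R}^n} S_{q,\alpha,\beta}a(x)\,d\gamma(x)=\int_{B} S_{q,\alpha,\beta}a(x)\,d\gamma(x)\le \gamma(B)^{1-1/q}\left(\int_{B} S_{q,\alpha,\beta}a(x)^{q}\,d\gamma(x)\right)^{1/q}.
\]
Expanding $S_{q,\alpha,\beta}a(x)^{q}$ from \eqref{area4} and switching the order of integration via Fubini, the inner $x$-integral becomes
\[
\gamma\bigl(\{x\in\mathbb{R}^n:\,|y-x|<\alpha t\wedge m_\beta(y)\}\bigr)=\gamma\bigl(B(y,\alpha t\wedge m_\beta(y))\bigr),
\]
which cancels exactly the denominator appearing in the definition of $S_{q,\alpha,\beta}a$. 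Hence
\[
\int_{B} S_{q,\alpha,\beta}a(x)^{q}\,d\gamma(x)\le \iint_{\mathbb{R}^{n+1}_+}|a(y,t)|^{q}\,d\gamma(y)\,\frac{dt}{t}=\|a\|_{L^q(\mathbb{R}^{n+1}_+,d\gamma\,dt/t)}^{q}\le \gamma(B)^{-(q-1)}
\]
by the size condition (ii). Combining the two estimates yields $\|S_{q,\alpha,\beta}a\|_{L^1(d\gamma)}\le \gamma(B)^{1-1/q}\cdot \gamma(B)^{-(q-1)/q}=1$.

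\textbf{Step 3: The case $q=\infty$.} The localization argument in Step 1 still applies since it uses only the support property. Moreover, for every $x$,
\[
S_{\infty,\alpha,\beta}a(x)=\sup_{(y,t)\in\Gamma_\gamma^{\alpha,\beta}(x)}|a(y,t)|\le \|a\|_{L^\infty(\mathbb{R}^{n+1}_+,d\gamma\,dt)}\le \gamma(B)^{-1}.
\]
Integrating over $B$ gives $\int S_{\infty,\alpha,\beta}a\,d\gamma\le \gamma(B)^{-1}\gamma(B)=1$.

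The only non-mechanical step is Step~1 — once the support of $S_{q,\alpha,\beta}a$ is confined to $B$, the argument is purely bookkeeping of measures. The main subtlety there is correctly unwinding the definition $T_\gamma^{\alpha,\beta}(B)=(R_\gamma^{\alpha,\beta}(B^c))^c$ so that the cone of a point outside $B$ is automatically disjoint from $T_\gamma^{\alpha,\beta}(B)$; this is exactly the content of the $(y,t)$-inequality above. With that in hand, H\"older and Fubini finish the proof without any further subtleties.
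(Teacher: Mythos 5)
Your proposal is correct and follows essentially the same route as the paper: the key observation that the cone over a point outside $B$ misses the tent $T_\gamma^{\alpha,\beta}(B)$ (so $S_{q,\alpha,\beta}a$ is supported in $B$), followed by H\"older's inequality with exponents $q,q'$ and Fubini to cancel the normalizing factor $\gamma(B(y,\alpha t\wedge m_\beta(y)))$, with the trivial sup bound handling $q=\infty$. The only difference is cosmetic — the paper applies H\"older before expanding the $q$-th power rather than after — so nothing further is needed.
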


The proof can be found in Section \ref{atomitos}.

\subsection{Carleson Measure}

In this section, we also introduce the notion of a Gaussian Carleson measure, which plays a key role in the duality theory of tent spaces. This measure will be characterized in terms of its behavior in tents over admissible balls. Finally, we introduce the space $\mathscr{M}_C$ of Carleson measures.

\begin{defi} A signed measure $\mu$ defined on a Borel $\sigma$-algebra in $\mathbb{R}^{n+1}_+$ is said to be a Gaussian Carleson measure if  there exist a $\delta>\beta>0,$ and a constant $C$ such that for any ball $B\in \mathscr{B}_\delta,$
\begin{equation}
|\mu|(T_\gamma^{\alpha, \beta}(B)) \leq C  \gamma(B),
\end{equation}
being $|\mu|=\mu_++\mu_-$ the total variation of $\mu$ (see \cite[Chapters 2 \& 6]{ru}).

Equivalently,
\begin{equation}
\sup_{B\in \mathscr{B}_\delta}\frac{1}{ \gamma(B)} \iint_{T^{\alpha, \beta}_\gamma(B)} d|\mu|(x,t) \leq C.
\end{equation}
\end{defi}
The set $\mathscr{M}_C$ that collects all Carleson measures is a Banach space with the norm
\[\|\mu\|_C=\sup_{B\in \mathscr{B}_\delta}\frac{1}{ \gamma(B)} \iint_{T^{\alpha,\beta}_\gamma(B)} d|\mu|(x,t).\]

\section{Main results}\label{main}
 
In this section, we present the three main contributions of the paper. First, we prove the atomic decomposition of Gaussian tent spaces \( T^{1,q}_{\alpha, \beta}(\gamma) \) and explore their connections to classical Gaussian function spaces using a suitable operator. Then, we establish the duality theory for Gaussian tent spaces. This framework is analogous to the classical duality theory in harmonic analysis, where dual spaces are identified through natural pairings. In particular, at the endpoint \( p = 1, q = \infty \), the notion of Gaussian Carleson measures emerges as the appropriate dual objects in this setting. The last contribution has to do with the proof of the independence of tent spaces from the Gaussian cone aperture and from the cutoff function position, that is, the tent spaces do not depend neither on \( \alpha \) nor on \( \beta \), that is,
\[
T^{p,q}_{\alpha, \beta}(\gamma) = T^{p,q}_{1, 1}(\gamma), \quad \text{for all } \alpha \text{ and } \beta.
\] and it will be interpreted as $T^{p,q}(\gamma).$

\subsection{Atomic decomposition}\label{atomic}

In this section, we present the atomic decomposition of the space $T^{1,q}_{\alpha, \beta}(\gamma)$ for $1\le q \le \infty,$ and establish a connection between the Gaussian tent spaces $T^{1,2}(\gamma),$ $T^{p,2}(\gamma)$ for $1<p<\infty,$ $T^{\infty,2}(\gamma)$ and the corresponding $\mathbb{R}^n$-spaces $H^1(\gamma),$ $L^p(\gamma)$ for $1<p<\infty,$ $BMO(\gamma)$ through a bounded linear operator similar to the one defined in \cite{cms}.

\begin{teo}[Atomic decomposition]\label{teo: descomposicion atomica} For every $\alpha>0$, $\beta>0$, $ 1\le q \le +\infty $ and every $f\in T_{\alpha, \beta}^{1,q}(\gamma)$, there exist a sequence $\{\lambda_m\}_{m\in \mathbb{N}}\in \ell^1$ and a sequence of $T^{1,q}_{\alpha, \beta}(\gamma)$ $\delta$-atoms $\{a_m\}_{m\in \mathbb{N}}$ for some $\delta>0$ such that
\begin{enumerate}
    \item $f=\sum_{m=1}^\infty \lambda_m a_m$;
    \item $\sum_{m=1}^\infty |\lambda_m|\leq C\|f\|_{T_{\alpha,\beta}^{1,q}(\gamma)}$ for some positive constant $C$ independent of $f$.
\end{enumerate}
\end{teo}

  The proof of the Theorem can be found in Section \ref{elteorema}.

\begin{remark}
The converse of Theorem \ref{teo: descomposicion atomica} is also true using the fact that the atoms are in the unit ball of $T^{1,q}_{\alpha,\beta}(\gamma)$ according to Lemma \ref{NOSE}, the series associated with the sequence $\{\lambda_m\}$ is absolutely convergent, and $T^{1,q}_{\alpha,\beta}(\gamma)$ is a Banach space by Lemma \ref{Bannach}.
\end{remark}

We close this section with Theorem \ref{coneccion1}, which describes how Gaussian tent spaces are embedded into the classical function spaces associated with the Gaussian measure. The existence of such immersions allows us to interpret several properties of tent spaces within the more familiar framework of spaces like \( H^1(\gamma) \), \( L^p(\gamma) \), and \( BMO(\gamma) \).

\begin{teo}\label{coneccion1}
Immersions into classical Gaussian function spaces.
For every \( 1 \le p \le \infty \), the Gaussian tent space \( T^{p,2}(\gamma) \) embeds continuously into the corresponding classical Gaussian function space on \( \mathbb{R}^n \):
\begin{enumerate}
\item \( T^{1,2}(\gamma) \hookrightarrow H^1(\gamma); \)
\item \( T^{p,2}(\gamma) \hookrightarrow L^p(\gamma) \), for \( 1 < p < \infty \);
\item \( T^{\infty,2}(\gamma) \hookrightarrow BMO(\gamma). \)
\end{enumerate}
\end{teo}

The proof of Theorem~\ref{coneccion1} is given in Section~\ref{edison1}.

\begin{remark}
The definitions of \( H^1(\gamma) \) and \( BMO(\gamma) \) can be found in~\cite{MM}. 
What we really prove in 1. is that the immersion function takes the atoms from
Theorem \ref{teo: descomposicion atomica} to the $H^1(\gamma)$ atoms defined in \cite{MM}.
\end{remark}

\subsection{Duality}\label{carleson}

In this section, we characterize the dual of the endpoint space $T^{1,\infty}_{\alpha, \beta} (\gamma)$ in terms of Gaussian Carleson measures. Then, we generalize this result by identifying the dual of $T^{p,q}_{\alpha, \beta} (\gamma)$, for $1 \leq p < \infty$ and $1 < q < \infty$, with the space $T^{p',q'}_{\alpha, \beta} (\gamma)$ through a natural pairing. These duality results {also provide another way of proving that such Gaussian tent spaces are necessarily Banach spaces.}

\begin{teo}\label{car1}
The dual of $T^{1,\infty}_{\alpha, \beta} (\gamma)$ is the space $\mathscr{M}_C$ of Carleson measures, via the pairing
\begin{equation}
(\mu, f)\to \iint_{\mathbb{R}^{n+1}_+} f(x,t)\, d\mu(x,t),
\end{equation}
for every $(\mu, f)\in \mathscr{M}_C\times T^{1,\infty}_{\alpha, \beta}(\gamma).$
\end{teo}

The proof can be found in Section \ref{car1proof}.
\begin{teo}\label{car2}
For every $1\le p<\infty$ and $1<q<\infty,$ the dual space of $T^{p,q}_{\alpha, \beta}(\gamma)$ can be identified to $T^{p',q'}_{\alpha, \beta}(\gamma)$ via the pairing
\begin{equation}
(g,f)\to \iint_{\mathbb{R}^{n+1}_+} g(y,t)\, f(y,t)\, d\gamma (y)\frac{dt}{t},
\end{equation}
for $(g,f)\in T^{p',q'}_{\alpha,\beta}(\gamma)\times T^{p,q}_{\alpha, \beta}(\gamma).$
\end{teo}
 
The proof can be found in Section \ref{car2proof}.

\subsection{Independence of  $\alpha$ and $\beta$}\label{new}

 The following theorem states that tent spaces are actually independent of the parameters $\alpha$ and $\beta$. 
  
 In fact, we could have started just with $\alpha=\beta=1$ in their definition.  

\begin{teo}\label{independencia}
For either $1\le p,q<\infty,$ or $1< p<\infty, q=\infty,$ or $p=\infty, 1<q<\infty$ and for every  measurable function $f$ on $\mathbb{R}^{n+1}_+$ in the first and third cases and every continuous function $f$ in the second case the following equivalence of norms  $$\|f\|_{T_{\alpha, \beta}^{p,q}(\gamma)}\sim_{n, \alpha, \beta, \delta, \lambda}\, \|f\|_{T^{p,q}_{\delta, \lambda}(\gamma)}$$ holds for any $\alpha, \beta, \delta, \lambda>0.$
\end{teo}

The proof can be found in Section \ref{car3proof}
\section{Conclusions and open problems}\label{conclusiones}
In the classical setting of $(\mathbb{R}^{n+1}_+, dx\,\tfrac{dt}{t})$, Coifman-Meyer-Stein \cite{cms} introduced the tent spaces $T^{p,q}$ and developed a unified framework that includes atomic decompositions, duality, and interpolation. 
In this work, we treat the Gaussian counterpart: we define tent spaces on $(\mathbb{R}^{n+1}_+, d\gamma \,\tfrac{dt}{t})$ and establish an atomic decomposition and the full duality theory.

In the Gaussian setting, the dual of $T^{1,q}(\gamma)$ is described via the functional $C$ through $T^{\infty,q'}(\gamma)$, while for $1<p<\infty$ the dual of $T^{p,q}(\gamma)$ is described via the conical square function $S$ through $T^{p',q'}(\gamma)$. 
In analogy to \cite{cms}, it is natural to investigate the precise relationship between $S$ and $C$. 
We expect that $\|S(f)\|_{L^{p}(\gamma)}$ and $\|C(f)\|_{L^{p}(\gamma)}$ are equivalent for $2<p<\infty$, which would allow a uniform characterization of $T^{p,q}(\gamma)$ by the functional $C$ in the whole range $2<p<\infty$. 
Another direction, also suggested by \cite{cms}, is to develop the complex interpolation method for the Gaussian tent spaces $T^{p,q}(\gamma)$. On the other hand, another open problem is characterizing $H^1(\gamma)$ through the area function, either as the one in \cite{FSU} or as the one in \cite{MNP2}, belonging to space $L^1(\gamma).$

\section{Proofs of Section \ref{prelim}}\label{demostraciones}

\subsection{Proof of Lemma\ref{GaussTent}}\label{LASTENT}

Let $h(s)=s+\frac{\beta}{s}$ with $s>0$ then $h'(s)>0$ if and only  $s>\sqrt{\beta}.$ Thus $h$ is strictly increasing for $s\ge \sqrt{\beta}.$

Given $B\in \mathscr{B}_\beta,$ let us assume that there exists $(y,t)\in T^{\alpha, \beta}_\gamma(B)$ and $(y,t)\notin D_{\alpha, \beta},$  i.e., $\alpha t\ge m_\beta (y)$ and $\text{dist}(y, B^c)\ge \alpha t\wedge m_\beta (y); $
i.e. $r_B-|y-c_B|\ge m_\beta (y).$

Since $|q_B|\ge 1,$ then for all $y\in B, |y|\ge 1$ and $m_\beta (y)=\frac{\beta}{|y|},$ in particular, 
$m_\beta (c_B)=\frac{\beta}{|c_B|}.$ Taking into account all the above and that $r_B \le m_\beta (c_B) $ we have
$$|y|-|c_B|\le |y-c_B|\le r_B-\frac{\beta}{|y|}\le \frac{\beta}{|c_B|}-\frac{\beta}{|y|}.$$ That is $h(|y|)\le h(|c_B|).$ Since $h$ is strictly increasing for $ s\ge \sqrt{\beta}$ and $|y|$ and $|c_B| \ge |q_B| \ge \sqrt{\beta}$, we get that $|y|\le |c_B|.$  

In the same way, we now have $|c_B|-|y|\le r_B-\frac{\beta}{|y|}\le \frac{\beta}{|c_B|}-\frac{\beta}{|y|}$. Using that the function $k(s)=s-\frac{\beta}{s}$ is positive and strictly increasing for $s>\sqrt{\beta}$ we obtain $|c_B|\le |y|.$ Thus $|y|=|c_B|$ and so $m_\beta (y)=m_\beta (c_B),$ and $|y-c_B|\le r_B - m_\beta (y)\le 0.$ Therefore, $y=c_B.$
Thus, the first part of the lemma is proven.

For the second part, it is immediate to see that $T^\alpha (B)$ is always a subset of $T_\gamma^{\alpha, \beta}(B).$
Now, from the proof of the first part,
for all $(y,t)\in T^{\alpha, \beta}_\gamma (B)$ with $y\ne c_B$ we have $(y,t)\in D_{\alpha, \beta},$ and then $(y,t)\in T^\alpha (B).$

For $y=c_B,$ we have $\text{dist}(c_B,B^c)=r_B\le m_\beta(c_B),$ and $(c_B,t)\in T_\gamma^{\alpha, \beta}(B),$ implies $m_\beta (c_B)\ge r_B =\text{dist}(c_B,B^c)\ge \alpha t\wedge m_\beta(c_B).$ Now, if $r_B<m_\beta(c_B),$ then $r_B\ge \alpha t,$ i.e., $(c_B,t)\in T^\alpha(B).$ If $r_B=m_\beta(c_B),$ then $\text{dist}(c_B,B^c)=r_B=m_\beta(c_B)\ge \alpha t\wedge m_\beta (c_B)$ for all $t>0,$ thus $(c_B,t)\in T^\alpha(B)\cup (\{c_B\}\times (0,\infty)).$  This ends the proof of Lemma \ref{GaussTent}.

{{ 
\subsection{Proof of Lemma \ref{continuity}}\label{falta}

Let $h>0$ be given. For $\alpha,$ $\beta>0$ we define the cones
\begin{equation*}
\Gamma_{\gamma, h}^{\alpha,\beta}(x)=\{(y,t)\in\mathbb{R}^{n+1}_+: |y-x|<\alpha(t-h)\wedge (m_\beta(y)-\alpha h)\}.
\end{equation*}
This is a family of Gaussian cones so that $\bigcup_{h>0}\Gamma_{\gamma, h}^{\alpha,\beta}(x)=\Gamma_{\gamma}^{\alpha, \beta}(x),$ and for $1\le q<\infty,$ we also have
\begin{equation}\label{limite}
S_{q,\alpha,\beta}f(x)=\limsup_{h\to 0^+}\left(\iint_{\Gamma_{\gamma, h}^{\alpha, \beta}(x)}\frac{|f(y,t)|^q}{\gamma(B(y,\alpha t\wedge m_\beta (y)))}\, d\gamma(y)\, \frac{dt}{t}\right)^{1/q}.
\end{equation}
For $\lambda>0$, let us call $O_1=\{x\in \mathbb{R}^n: S_{q,\alpha,\beta}f(x)>\lambda\}$ for $1\le q\le \infty,$ and $O_2=\{x\in \mathbb{R}^n: C_{q,\alpha, \beta} f(x)>\lambda\}$ for $1<q<\infty.$

In order to prove that these functions are all lower semicontinuous it suffices to prove that $O_1$ and $O_2$ are open sets.

For $1\le q<\infty,$ and for every $x\in O_1$, taking into account (\ref{limite}), there exists $h_0>0$ so that $$\left(\iint_{\Gamma_{\gamma, h_0}^{\alpha, \beta}(x)}\frac{|f(y,t)|^q}{\gamma(B(y,\alpha t\wedge m_\beta (y)))}\, d\gamma(y)\, \frac{dt}{t}\right)^{1/q}>\lambda.$$ We also have that for any $  z\in B(x,\alpha h_0),$ $\Gamma_{\gamma, h_0}^{\alpha, \beta}(x)\subset \Gamma_\gamma^{\alpha, \beta}(z).$ Indeed, for $(y,t)\in \Gamma_{\gamma, h_0}^{\alpha, \beta}(x),$ $|y-z|<|y-x|+\alpha h_0<\alpha (t-h_0)\wedge(m_\beta(y)-\alpha h_0)+\alpha h_0\le \alpha t\wedge m_\beta(y).$ Hence, $(y,t)\in \Gamma_\gamma^{\alpha,\beta}(z).$ Thus, $B(x,\alpha h_0)\subset O_1.$ Therefore, $O_1$ is open.

For $q=\infty,$ let us recall that we are asking $f\in \mathscr{C}(\mathbb{R}^{n+1}_+).$ For every $x\in O_1$
there exists $(y,t)\in \Gamma_\gamma^{\alpha, \beta}(x)$ so that $|f(y,t)|>\lambda.$ Observe that for every $z\in B(y,\alpha t\wedge m_\beta(y)),$ we have $S_{\infty, \alpha, \beta}f(z)\ge |f(y,t)|>\lambda.$ So $x\in B(y,\alpha t\wedge m_\beta (y))\subset O_1.$ Thus $O_1$ is open in this case as well.

Now, for every $x\in O_2$ there exists an open ball $B$ such that $x\in B(c_B, \alpha r_B\wedge m_\beta (c_B))$ and $$\left(\frac{1}{\gamma(B)}\iint_{T^{\alpha, \beta}_\gamma(B)}|f(y,t)|^q\, d\gamma(y)\, \frac{dt}{t}\right)^{1/q}>\lambda.$$ Thus, $x\in B(c_B, \alpha r_B\wedge m_\beta (c_B))\subset O_2.$ Hence $O_2$ is also an open set.

\subsection{Proof of Lemma \ref{Bannach}}\label{proofbanach}

For the proof of Lemma \ref{Bannach} and other results we need the following lemma whose proof can be found in [Lemma 2.3 \cite{MNP}].

  \begin{lemma}\label{comparacion}
Let $b>0$ be given. If $|x-y|<b\, m(y)$ then 
$$m(y)<(b+1)m(x), \quad  \text{and} \quad m(x)<(b+1)m(y).$$
\end{lemma}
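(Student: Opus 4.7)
The plan is to reformulate the lemma in terms of the reciprocal function $w(x) := 1 \vee |x|$, so that $m(x) = 1/w(x)$. Under this reformulation, the hypothesis $|x-y| < b\, m(y)$ becomes $w(y)\,|x-y| < b$, while the two conclusions $m(y) < (b+1)\, m(x)$ and $m(x) < (b+1)\, m(y)$ become, respectively,
\[
w(x) < (b+1)\,w(y) \quad \text{and} \quad w(y) < (b+1)\,w(x).
\]
The whole argument then reduces to a single estimate on $w$.

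The key observation is that $w$ is $1$-Lipschitz on $\mathbb{R}^n$, because it is the composition of the norm $x \mapsto |x|$ (which is $1$-Lipschitz) with $t \mapsto 1 \vee t$ (which is also $1$-Lipschitz on $[0,\infty)$). Combined with the hypothesis this yields
\[
|w(x) - w(y)| \leq |x-y| < \frac{b}{w(y)}.
\]
From the bound $w(x) - w(y) < b/w(y)$, dividing by $w(y)$ and using $w(y)\geq 1$ gives $w(x)/w(y) < 1 + b/w(y)^2 \leq 1+b$, which is the first inequality. For the second, $w(y) - w(x) < b/w(y) \leq b$ and then $w(y) < w(x) + b \leq w(x) + b\,w(x) = (1+b)\,w(x)$, where the last step uses $w(x)\geq 1$.

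I do not expect any real obstacle here: the lemma is essentially the $1$-Lipschitz property of $1 \vee |\cdot|$ packaged together with two lines of algebra, and the strictness of the inequality is inherited directly from the strict hypothesis. A naive alternative would be a four-case analysis splitting on whether $|x|$ and $|y|$ lie above or below $1$; this also works but is unnecessarily long, and the Lipschitz reformulation bypasses it entirely.
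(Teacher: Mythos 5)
Your proof is correct. Note that the paper does not actually prove Lemma~\ref{comparacion}: it is quoted from \cite{MNP} (Lemma~2.3 there), so there is no internal argument to compare against. Your reformulation via $w(x)=1\vee|x|=1/m(x)$ is clean and complete: $w$ is indeed $1$-Lipschitz as a composition of $1$-Lipschitz maps, the hypothesis translates to $|w(x)-w(y)|<b/w(y)$, and both conclusions follow from $w\ge 1$ exactly as you write, with strictness inherited from the strict hypothesis (the degenerate case $x=y$ is also covered since $b>0$). The more common route, and the one effectively taken in the cited reference, is the four-case split on whether $|x|,|y|$ exceed $1$; your version avoids that entirely and is, if anything, tidier.
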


We also need to introduce subspaces that are dense in Gaussian tent spaces. We denote by $L_c^q(\mathbb{R}^{n+1}_+, d\gamma dt/t)$ the space of $L^q$-integrable functions with compact support.
\begin{lemma}\label{density}
The space $L^q_c(\mathbb{R}^{n+1}_+,d\gamma(y)dt/t)$ is a dense subspace of $T^{p,q}_{\alpha, \beta}(\gamma),$ for $1\le p<\infty$ and $1\le q<\infty$. On the other hand, $\mathscr{C}_c(\mathbb{R}^{n+1}_+)$ is a dense subspace of $T^{p,\infty}_{\alpha, \beta}(\gamma).$
\end{lemma}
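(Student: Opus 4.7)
\emph{Proof plan for Lemma~\ref{density}.} My approach is a standard truncation argument combined with the dominated convergence theorem. Given $f \in T^{p,q}_{\alpha,\beta}(\gamma)$ with $1 \le p < \infty$ and $1 \le q < \infty$, I would set
\[K_j := \{(y,t) \in \mathbb{R}^{n+1}_+ : |y| \le j,\ 1/j \le t \le j\},\]
a compact subset of $\mathbb{R}^{n+1}_+$ whose union over $j$ exhausts $\mathbb{R}^{n+1}_+$, and define
\[f_j(y,t) := f(y,t)\,\chi_{K_j}(y,t)\,\chi_{\{|f|\le j\}}(y,t).\]
Then $f_j$ is bounded by $j$, compactly supported in $K_j$, and since $1/t$ is bounded on $K_j$ and $\gamma$ is finite, one has $f_j \in L^q_c(\mathbb{R}^{n+1}_+, d\gamma(y)\,dt/t)$; moreover $f_j \to f$ pointwise a.e.

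To verify $\|f - f_j\|_{T^{p,q}_{\alpha,\beta}(\gamma)} \to 0$, I would apply the dominated convergence theorem twice. Fix $x$ with $S_{q,\alpha,\beta}f(x) < \infty$ (a.e.\ $x$, since $S_{q,\alpha,\beta}f \in L^p(\gamma)$). At the inner level, the integrand $|f(y,t) - f_j(y,t)|^q / \gamma(B(y,\alpha t \wedge m_\beta(y)))$ restricted to $\Gamma_\gamma^{\alpha,\beta}(x)$ tends to $0$ pointwise and is dominated by the integrable function $|f(y,t)|^q / \gamma(B(y,\alpha t \wedge m_\beta(y)))$, so DCT yields $S_{q,\alpha,\beta}(f - f_j)(x) \to 0$ a.e. At the outer level, $|f_j| \le |f|$ together with the sublinearity of $S_{q,\alpha,\beta}$ produces the global dominant $S_{q,\alpha,\beta}(f - f_j) \le 2\,S_{q,\alpha,\beta}f \in L^p(\gamma)$, and a second DCT in $L^p(\gamma)$ yields the conclusion.

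For $q = \infty$, I would instead pick continuous cutoffs $\phi_j \in \mathscr{C}_c(\mathbb{R}^{n+1}_+)$ with $0 \le \phi_j \le 1$, $\phi_j \equiv 1$ on $K_j$ and $\mathrm{supp}(\phi_j) \subset K_{j+1}$, and set $f_j := f\phi_j \in \mathscr{C}_c(\mathbb{R}^{n+1}_+)$; the same $L^p$-dominant $S_{\infty,\alpha,\beta}(f - f_j) \le 2\,S_{\infty,\alpha,\beta}f$ remains available. The main obstacle is the pointwise convergence $S_{\infty,\alpha,\beta}(f - f_j)(x) \to 0$ for a.e.\ $x$: unlike the integral case, a supremum over $\Gamma_\gamma^{\alpha,\beta}(x)$ (which extends to $t\to\infty$) does not automatically shrink as the cutoff support enlarges. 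Overcoming this step requires exploiting the Gaussian-specific constraint $m_\beta(y) \le \beta$, which keeps the spatial cross-section of the cone bounded, together with the a.e.\ finiteness of $S_{\infty,\alpha,\beta}f(x)$, so that the ``tail'' of the cone contributes negligibly as $j \to \infty$; this is the technical heart of the $q = \infty$ case.
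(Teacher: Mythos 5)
For $1\le q<\infty$ your plan is correct and is essentially the paper's argument: truncate by a compact exhaustion and pass to the limit (you invoke dominated convergence twice; the paper invokes monotone convergence for the decreasing sequence $|f-f_j|\le|f|$ — these are interchangeable here, since $S_{q,\alpha,\beta}f(x)<\infty$ a.e.\ supplies the integrable majorant on each cone and $S_{q,\alpha,\beta}f\in L^p(\gamma)$ supplies the outer one). Your extra value-truncation $\mathcal{X}_{\{|f|\le j\}}$ is a harmless variant that lets you see immediately that $f_j\in L^q_c$, whereas the paper instead proves the local estimate $\|f\mathcal{X}_K\|_{L^q(K,d\gamma\,dt/t)}\lesssim_K\|f\|_{T^{p,q}_{\alpha,\beta}(\gamma)}$ (which it reuses later, e.g.\ in the completeness and duality proofs). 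One piece of the statement you do not address: ``dense \emph{subspace}'' also requires $L^q_c(\mathbb{R}^{n+1}_+,d\gamma\,dt/t)\subseteq T^{p,q}_{\alpha,\beta}(\gamma)$, i.e.\ that an \emph{arbitrary} compactly supported $L^q$ function $g$ has $S_{q,\alpha,\beta}g\in L^p(\gamma)$. This is routine (on a compact $K$ the radius $\alpha t\wedge m_\beta(y)$ and hence $\gamma(B(y,\alpha t\wedge m_\beta(y)))$ are bounded below, and $S_{q,\alpha,\beta}g$ has bounded support), but the paper proves it explicitly and you should too.

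For $q=\infty$ there is a genuine gap, and you have put your finger on exactly the right spot — but the remedies you suggest cannot close it. Neither the bound $m_\beta(y)\le\beta$ nor the a.e.\ finiteness of $S_{\infty,\alpha,\beta}f$ forces $|f|$ to decay at infinity \emph{along} the cone: finiteness of the supremum is boundedness, not decay, and the cone $\Gamma_\gamma^{\alpha,\beta}(x)$ always contains the full vertical ray $\{x\}\times(0,\infty)$, so it meets the complement of every compact set. Concretely, $f\equiv 1$ is continuous with $S_{\infty,\alpha,\beta}f\equiv 1\in L^p(\gamma)$, hence $f\in T^{p,\infty}_{\alpha,\beta}(\gamma)$ as the space is defined; yet for any $g\in\mathscr{C}_c(\mathbb{R}^{n+1}_+)$ one has $S_{\infty,\alpha,\beta}(f-g)(x)\ge|1-g(x,t)|=1$ for $t$ large, so $\|f-g\|_{T^{p,\infty}_{\alpha,\beta}(\gamma)}\ge\gamma(\mathbb{R}^n)^{1/p}>0$ and no approximation is possible. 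So the step you flag as ``the technical heart'' is not merely technical: with the definitions as given, the $q=\infty$ assertion fails unless one imposes some vanishing at infinity (e.g.\ works with $\mathscr{C}_0$-type functions or redefines $S_{\infty,\alpha,\beta}$ over truncated cones). For what it is worth, the paper's own proof is equally cursory at this point (it says the claim ``follows the same steps'' as the integral case), so you have not overlooked an argument that is actually available there.
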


\noindent {\bf Case
$1\le q<\infty$, $1\le p <\infty$} 

We first prove that $L_c^{q}(\mathbb{R}^{n+1}_+,d\gamma\, dt/t) \subset T^{p,q}_{\alpha,\beta}(\gamma)$. In fact, letting $1\le q<\infty$ and $g \in 
 L_c^{q}(\mathbb{R}^{n+1}_+,d\gamma\, dt/t)$, then there exists a compact set $K$ such that
$\text{supp}(g)\subseteq K$ and $g \in L^q(K, d\gamma dt/t)$. Without loss of generality, we may assume that $K= \bar{B}\left(0, r\right) \times[a, b]$ for some open ball $B\left(0, r\right) \subset \mathbb{R}^n$ and $a, b \in(0, \infty)$.
Note that $S_{q,\alpha, \beta}(g)$ is supported on $\bar{B}\left(0, r+\alpha b\right) \subset \mathbb{R}^n$, which is a bounded subset of $\mathbb{R}^n$. In fact, for $|x|>r+\alpha b$ if $(y,t)\in K,$ then $|y-x|\ge |x|-|y|>\alpha b\ge \alpha t\ge \alpha t\wedge m_\beta(y),$ that is, $(y,t)\notin \Gamma_\gamma^{\alpha, \beta}(x).$
Now, let us prove that $g\in T_{\alpha, \beta}^{p,q}(\gamma).$
\begin{align*}
\|g\|_{T_{\alpha,\beta}^{p,q}(\gamma)}^p &  =\|S_{q,\alpha, \beta}\, g\|^p_{L^p(\gamma)} \\
& \leq \int_{B\left(0, r+\alpha b\right) }\left(\iint_{\Gamma_\gamma^{\alpha, \beta}(x)\cap K}\frac{|g(y,t)|^q}{\gamma(B(y,\alpha t\wedge m_\beta(y)))}\,d\gamma(y)\,\frac{dt}{t}\right)^{\frac{p}{q}} d \gamma(x) \\
& \leq C \int_{B\left(0, r+\alpha b\right)}\left(\frac{e^{|x|^2}}{\left[\alpha a\wedge \frac{m_\beta(x)}{\beta+1}\right]^{n}}\right)^{p/q} \left(\iint_{K} |g(y, t)|^q d\gamma(y)\, \frac{dt}{t}\right)^{\frac{p}{q}} dx\\
& \leq  C_{p,q,\alpha,\beta, K}\|g\|_{L^q(K, d\gamma\, dt/t)}^p .
\end{align*}

Hence $L_c^{q}(\mathbb{R}^{n+1}_+,d\gamma\, dt/t)$ is contained in $T^{p,q}_{\alpha,\beta}(\gamma).$

Now, if we take $f\in T^{p,q}_{\alpha,\beta}(\gamma),$ we will prove that for
any compact set $K$ we have
\begin{equation}\label{mm}
 \left\|f \mathcal{X}_K \right\|_{L^q(\mathbb{R}^{n+1}_+,d\gamma dt/t)} \leq   C_{n, \alpha, \beta, K}\ \|f\|_{T^{p, q}_{\alpha, \beta}(\gamma)}.
\end{equation}
  In fact, due to the compactness of $\bar{B}(0,r),$ we split $\bar{B}(0,r)$ into a finite open cover of balls of radius $\frac{1}{2} (\alpha a \wedge \frac{m_\beta(x)}{\beta+1})$, i.e., pick $x_1, \ldots, x_N$ such that $\bar{B}\left(0, r\right) \subset \bigcup_{j=1}^N B\left(x_j, \frac{1}{2} (\alpha a \wedge \frac{m_\beta(x_j)}{\beta+1})\right)$. And with the choice of these balls we have
$$B\left(x_j, \frac{1}{2}(\alpha a \wedge \frac{m_\beta(x_j)}{\beta+1})\right) \subset \left\{ x \in \mathbb{R}^n: B\left(x_j, \frac{1}{2}(\alpha a\wedge \frac{m_\beta(x_j)}{\beta+1}) \right) \times[a, b] \subset \Gamma_\gamma^{\alpha, \beta}(x) \right\},$$
 which ensures an average estimate on each of them as
\begin{align*}
 & \left(\iint_{B\left(x_j, \frac{1}{2}(\alpha a \wedge \frac{m_\beta(x_j)}{\beta+1})\right) \times[a, b]}\frac{|f(y,t)|^q}{\gamma(B(y,\alpha t\wedge m_\beta(y)))}\,d\gamma(y)\,\frac{dt}{t}\right)^{\frac{1}{q}}  \\
&  \leq \left(  \; \avgint_{B\left(x_j, \frac{1}{2}(\alpha a \wedge \frac{m_\beta(x_j)}{\beta+1}) \right)}\right. \left.\left(\iint_{\Gamma_\gamma^{\alpha, \beta}(x)}\frac{|f(y,t)|^q}{\gamma(B(y,\alpha t\wedge m_\beta(y)))}\,d\gamma(y)\,\frac{dt}{t}\right)^{\frac{p}{q}} d\gamma(x)\right)^{\frac{1}{p}} \\
 & \lesssim C_{n, p, \alpha, \beta, a, r}\left(\int_{B\left(x_j, \frac{1}{2} (\alpha a \wedge \frac{m_\beta(x_j)}{\beta+1})\right)}\left(S_{q,\alpha, \beta}(f)(x)\right)^p   d\gamma(x)\right)^{\frac{1}{p}} \\
&\leq C_{n, p, \alpha, \beta, K}  \|f\|_{T_{\alpha,\beta}^{p,q}(\gamma)},
\end{align*}
where $\avgint_A f(x)\, d\gamma(x)=\frac{1}{\gamma(A)} \int_A f(x)\, d\gamma(x).$
Adding these pieces and setting $C_n =\pi^{n/2}$, we get
\begin{align*}
&\left\|f {\mathcal X}_K \right\|_{L^q(\mathbb{R}^{n+1}_+, \gamma dt/t)} \leq C_n^{\frac{1}{q}}\left(\iint_K \frac{|f(y,t)|^q}{\gamma(B(y,\alpha t\wedge m_\beta(y)))}\,d\gamma(y)\,\frac{dt}{t}\right)^{\frac{1}{q}} \\
& \leq C_n \sum_{j=1}^N \left(\iint_{B\left(x_j, \frac{1}{2} (\alpha a\wedge \frac{m_\beta (x_j)}{\beta+1})\right) \times[a, b]}\frac{|f(y,t)|^q}{\gamma(B(y,\alpha t\wedge m_\beta(y)))}\,d\gamma(y)\,\frac{dt}{t}\right)^{\frac{1}{q}} \\
& \leq C_{n, \alpha, \beta, K}\ \|f\|_{T^{p, q}_{\alpha, \beta}(\gamma)}.
\end{align*}
That proves \eqref{mm}.

Now, we fix a sequence of compact subsets $K_m$ of $\mathbb{R}^{n+1}_+$ that form an exhaustion of $\mathbb{R}_{+}^{n+1}$, that is, $K_m \subset$ $K_{m+1}$ for all $m \in \mathbb{N}$ and $\bigcup_{m \in \mathbb{N}} K_m=\mathbb{R}_{+}^{n+1}$. Then, let us
take $f\in T^{p,q}_{\alpha,\beta}(\gamma),$ and consider a sequence $\{g_m\}_{m\in {\mathbb N}}\subset L^{q}_c(\mathbb{R}^{n+1}_+, d\gamma(y)\frac{dt}{t})$ defined as $g_m(y,t)=f(y,t)\mathcal{X}_{K_m}(y,t).$ 

Next, we have to prove that $g_m\to f$ as $m\to \infty$ in $T^{p,q}_{\alpha, \beta}(\gamma),$ that is, $\|S_{q,\alpha, \beta}(f-g_m)\|_{L^p(\gamma)}\to 0$ as $m\to \infty.$  

We start by noticing that the sequence $\{|f(y,t)-g_m(y,t)|\}_{m\in \mathbb{N}}$ is a nonnegative decreasing sequence bounded above by $|f(y,t)|$ that converges to $0$ as $m\to \infty.$ Taking into account that $S_{q,\alpha, \beta} f\in L^p(\gamma),$ we find that $S_{q,\alpha, \beta} f(x)$ is finite for almost every $x\in \mathbb{R}^n.$ So, by Beppo Levy's monotone convergence theorem $\lim_{m\to \infty}S_{q,\alpha,\beta}(f-g_m)(x)=0$
for almost every $x\in \mathbb{R}^n.$ And then again by applying the same theorem to the non-negative monotone decreasing sequence $\left\{S_{q,\alpha,\beta}(f-g_m)(x)\right\}$
bounded above by
$S_{q,\alpha, \beta}f(x)$ a.e. $x\in \mathbb{R}^n$, $\lim_{m\to \infty}\|S_{q,\alpha, \beta}(f-g_m)\|_{L^p(\gamma)}=0$ holds.

\noindent {\bf Case
$  q=\infty$, $1\le p <\infty$} 

Let $g$ be a continuous function in $\mathbb{R}^{n+1}_+$ such that $\text{supp} (g)\subseteq K$. 

Observe that $S_{\infty, \alpha, \beta}\, g(x)\le \|g\|_\infty$ for every $x\in \mathbb{R}^n,$  and taking into account that $\gamma$ is a finite measure, we conclude that $\|g\|_{T^{p,\infty}_{\alpha, \beta}(\gamma)}\le C_n \|g\|_\infty.$
Thus $\mathscr{C}_c(\mathbb{R}^{n+1}_+)\subset T^{p,\infty}_{\alpha, \beta}(\gamma).$

Given $f\in T^{p,\infty}_{\alpha, \beta}(\gamma),$ to get a continuous function with compact support that is close to $f$ in $T^{p,\infty}_{\alpha, \beta}(\gamma)$, we must produce a smooth truncation for $f$. For the given compact set $K$ we take $g_K=f\eta_K$ with $\eta_K$ being a mollification of the characteristic function $\mathcal{X}_K.$ In this case $\eta_K\in C_c(\mathbb{R}^{n+1}_+)$ with its support contained a small compact neighborhood of $K.$  
Proceeding as before, 
we obtain
$$
\left\|f \eta_K\right\|_{\infty}  \le C_{N,n,\alpha, \beta, K}\|f\|_{T^{p, \infty}_{\alpha, \beta}(\gamma)}.
$$
Having set a sequence of compact sets $\{K_m\}_{m\in {\mathbb N}} $ that exhausts $\mathbb{R}^{n+1}_+,$ as we have done above, for $f\in T^{p,\infty}_{\alpha, \beta}(\gamma)$ we define the sequence $\{f\eta_{K_m}\}_{m\in {\mathbb N}}$ in $\mathscr{C}_{c}(\mathbb{R}^{n+1}_+)$ approaching $f$ in $T^{p,\infty}_{\alpha, \beta}(\gamma)$. The proof of this claim follows the same steps as those done above for $1\le q<\infty.$

There is no density result for the spaces $T^{\infty,q}_{\alpha, \beta}(\gamma),$ $1<q<\infty.$

 \

\noindent {\bf Proof of Lemma \ref{Bannach}.}
The proof is based on the localization of a function: Lemma \ref{density}. Let $\left\{f_k\right\}_{k \in \mathbb{N}}$ be a Cauchy sequence in $T_{\alpha, \beta}^{p,q}(\gamma)$.

For the case $1 \le p, q<\infty$, by Lemma \ref{density}, the sequence $\left\{f_k \mathcal{X}_{K}\right\}_{k \in \mathbb{N}}$ turns out to be a Cauchy sequence in $L^q(K, d\gamma dt/t)$ for every compact set $K \subset \mathbb{R}_{+}^{n+1}$, which therefore converges to some $f_K \in L^q(K, d\gamma dt/t)$. Pick $\left\{K_m\right\}_{m \in \mathbb{N}}$ a sequence of compact sets as an exhaustion of $\mathbb{R}_{+}^{n+1}$. Define $f=\lim _{m \rightarrow \infty} f_{K_m}$ pointwise, which is well defined due to the following identification: For any $i, j \in \mathbb{N},$ with $i>j$, $f_{K_i} \mathcal{X}_{K_j}=f_{K_j}$. 

Let $\epsilon>0$ be given, taking into account that $\{f_k\}_{k\in {\mathbb N}}$ is a Cauchy sequence in $T^{p,q}_{\alpha, \beta}(\gamma)$ together with Lemma \ref{density}, there exist $N, m \in \mathbb{N}$ large enough so that $\left\|f_k-f_N\right\|_{T_{\alpha, \beta}^{p,q}(\gamma)}<\epsilon$ for every $k \geq N,$ $\left\|f-f \mathcal{X}_{K_m}\right\|_{T_{\alpha,\beta}^{p, q}(\gamma)}<\epsilon$, and $\left\|f_N-f_N \mathcal{X}_{K_m}\right\|_{T_{\alpha, \beta}^{p,q}(\gamma)}<\epsilon$. Thus, for every $k \geq N$,
\begin{align*}
\left\|f-f_k\right\|_{T_{\alpha, \beta}^{p,q}(\gamma)} & \leq\left\|f-f \mathcal{X}_{K_m}\right\|_{T_{\alpha, \beta}^{p,q}(\gamma)}+\left\|\left(f-f_k\right)
\mathcal{X}_{K_m}\right\|_{T_{\alpha, \beta}^{p,q}(\gamma)}\\
& +\left\|\left(f_k-f_N\right) \mathcal{X}_{K_m}\right\|_{T_{\alpha, \beta}^{p,q}(\gamma)}  +\left\|f_N \mathcal{X}_{K_m}-f_N\right\|_{T_{\alpha, \beta}^{p,q}(\gamma)}\\
&+ \left\|f_N-f_k\right\|_{T_{\alpha, \beta}^{p,q}(\gamma)} \\
& <4 \epsilon+\left\|\left(f-f_k\right) \mathcal{X}_{K_m}\right\|_{T_{\alpha, \beta}^{p,q}(\gamma)}\\ & \lesssim_m  4\epsilon+ \|f_{K_m}-f_k\mathcal{X}_{K_m}\|_{L^q(K_m, d\gamma dt/t)}.
\end{align*}
We conclude by taking $\limsup$ on both sides as $k\to \infty$. It remains to prove that $f \in T_{\alpha, \beta}^{p,q}(\gamma)$. In fact,
take $N$ so that $\|f-f_N\|_{T^{p,q}_{\alpha,\beta}(\gamma)}<1,$ then $\|f\|_{T^{p,q}_{\alpha,\beta}(\gamma)}<1+\|f_N\|_{T^{p,q}_{\alpha,\beta}(\gamma)}<\infty.$

The proof for case $1\le p <\infty, q=\infty$ is the same. Extra continuity follows from the Banach property of $\mathscr{C}(K)$ on each compact set $K$.

 For the case $p=\infty$, $1< q <\infty.$ Let $\{f_k\}_{k\in {\mathbb N}}$ be a Cauchy sequence in $T^{\infty, q}_{\alpha, \beta}(\gamma).$ Then there exists $N$ a subset of $\mathbb{R}^n$ of measure zero such that $\hbox{ for any } x\in \mathbb{R}^n\setminus N,$
$C_{q,\alpha, \beta}(f_k)(x)\le \|f_k\|_{T^{\infty, q}_{\alpha, \beta}(\gamma)}$ for all $k\ge 1;$ being $$C_{q,\alpha, \beta}(f_k)(x)=\sup_{B: x\in B_{\alpha, \beta}}\frac{1}{\gamma(B)^{1/q}}\|f_k\|_{L^q(T^{\alpha, \beta}_\gamma(B),d\gamma dt/t)},$$ and $B_{\alpha, \beta}=B(c_B, \alpha r_B\wedge m_\beta (c_B)).$

Let $\{B_j\}_{j\in {\mathbb Z}^+}$ be a sequence of balls with center at $0$ and radius $j.$ Then $\{T^{\alpha,\beta}_\gamma(B_j)\}_j$ is an increasing sequence of tents such that $\bigcup_j T^{\alpha,\beta}_\gamma(B_j)=\mathbb{R}^{n+1}_+.$ Notice that $\{f_k\}_{k\in {\mathbb N}}$ is a Cauchy sequence in $L^q\left(T^{\alpha, \beta}_\gamma(B_j), d\gamma dt/t\right)$, hence it converges to some $f_{B_j} \in L^q\left(T^{\alpha, \beta}_\gamma(B_j), d\gamma dt/t\right)$.  

Now, as we have discussed, we construct the limit function $f$ as a measurable function such that $f\mathcal{X}_{T^{\alpha,\beta}_\gamma (B_j)}=f_{B_j},$ since for $i>j,$ $f_{B_i}\mathcal{X}_{T^{\alpha,\beta}_\gamma(B_j)}=f_{B_j}.$

Let us prove that $f\in T_{\alpha, \beta}^{\infty, q}(\gamma).$ Since $\{f_k\}_{k\in {\mathbb N}}$ is a Cauchy sequence in $T^{\infty, q}_{\alpha, \beta}(\gamma),$ then it is bounded, i.e., $\sup_{k\in {\mathbb N}}\|f_k\|_{T^{\infty, q}_{\alpha, \beta}(\gamma)}<\infty.$
 Let us prove that $C_{q,\alpha, \beta}f(x)\le \sup_{k\in {\mathbb N}}\|f_k\|_{T^{\infty, q}_{\alpha, \beta}(\gamma)}$ for almost every $x\in \mathbb{R}^n.$ 
 
 In fact, for every open ball $B$, there exists $j\in {\mathbb N}$ such that $B\subset B_j\subseteq B_i,$ for all $i\ge j.$ Then
\begin{align*}
\left(\frac{1}{\gamma(B)} \right. & \left.\iint_{T^{\alpha, \beta}_\gamma(B)}|f(y, t)|^q d\gamma(y) \frac{dt}{t}\right)^{\frac{1}{q}} = \left(\frac{1}{\gamma(B)} \iint_{T^{\alpha, \beta}_\gamma(B)}|f_{B_j}(y, t)|^q d\gamma(y) \frac{dt}{t}\right)^{\frac{1}{q}}\\ &\leq \sup _{k \geq 1}\left(\frac{1}{\gamma(B)} \iint_{T^{\alpha, \beta}_\gamma(B)}\left|f_k(y, t)\right|^q d\gamma(y) \frac{d t}{t}\right)^{\frac{1}{q}}\\ & \leq \sup _{k \geq 1}\ C_{q,\alpha, \beta}(f_k) (x) \ \text{for\ } x\in B_{\alpha, \beta}.
\end{align*}
Thus, \begin{align*}
C_{q,\alpha, \beta}f(x)&\le \sup_{k\in {\mathbb N}}\ C_{q,\alpha, \beta}(f_k)(x) \\ &\le \sup_{k\in {\mathbb N}}\ \left\|f_k\right\|_{T_{\alpha, \beta}^{\infty,q}(\gamma)}\ \text{for a.e.}\ x\in \mathbb{R}^n.
\end{align*}
so $f \in T_{\alpha, \beta}^{\infty,q}(\gamma)$.

Now, let us prove $\|f_k-f\|_{T^{\infty, q}_{\alpha, \beta}(\gamma)}\to 0$ as $k\to \infty.$ 
Note that
\begin{align*}
C_{q,\alpha, \beta}(f-f_k)(x)& = \sup _{B:\ x\in B_{\alpha, \beta}} \gamma(B)^{-\frac{1}{q}}\left\|f-f_k\right\|_{L^q\left(T^{\alpha, \beta}_\gamma(B) , d \gamma \frac{dt}{t}\right)} \\
& \leq \sup _{B:\ x\in B_{\alpha, \beta}} \gamma(B)^{-\frac{1}{q}}\left[\limsup _{\substack{m \rightarrow \infty \\
m \geq k}}\left(\left\|f-f_m\right\|_{L^q\left(T^{\alpha, \beta}_\gamma(B) , d \gamma \frac{dt}{t}\right)} \right. \right.\\
& \;\;\; \left. \left. +\left\|f_m-f_k\right\|_{L^q\left(T^{\alpha, \beta}_\gamma(B) , d \gamma \frac{dt}{t}\right)}\right)\right] \\
& \leq \sup _{B:\ x\in B_{\alpha, \beta}} \limsup _{\substack{m \rightarrow \infty \\
m \geq k}} \gamma(B)^{-\frac{1}{q}}\left\|f-f_m\right\|_{L^q\left(T^{\alpha, \beta}_\gamma(B) , d \gamma \frac{dt}{t}\right)}\\
& \;\;\;  +\sup _{m \geq k}\left\|f_m-f_k\right\|_{T_{\alpha, \beta}^{\infty,q}(\gamma)}\\
& \leq \sup _{m \geq k}\left\|f_m-f_k\right\|_{T_{\alpha, \beta}^{\infty,q}(\gamma)}, \text{for a.e.}\ x\in \mathbb{R}^n.
\end{align*}

We conclude the proof since $\{f_k\}$ is a Cauchy sequence.

\subsection{Proof of Lemma \ref{NOSE}}\label{atomitos}

If $a$ is a $T^{1,q}_{\alpha, \beta}(\gamma)$ $\delta$-atom, it is supported on $T_\gamma^{\alpha, \beta}(B)$ for some $B\in \mathscr{B}_\delta$. Notice that for any $(y,t)\in T_\gamma^{\alpha, \beta}(B)$ and $x\in B(y, \alpha t\wedge m_\beta(y))$, we have $x\in B$. In fact, since $(y,t)\in T_\gamma^{\alpha, \beta} (B)$, $\alpha  t\wedge m_\beta(y)\leq \hbox{dist}(y,B^c)$, and, on the other hand, we know that $|x-y|<\alpha t\wedge m_\beta(y)$. Thus, $|y-x|< d(y,B^c)$ means that $x\notin B^c$, i.e., $x\in B$ as claimed.

Now, let us estimate $\|S_{q,\alpha, \beta}a\|_{L^1(\gamma)}$. For $1\le q<\infty,$ by using the above fact, H\"older's inequality and Fubini's Theorem, we get
\begin{align*}
   & \|S_{q, \alpha, \beta}a\|_{L^1(\gamma)}=\int_{\mathbb{R}^n} \left(\iint_{\Gamma_\gamma^{\alpha, \beta}(x)} \frac{|a(y,t)|^q}{\gamma(B(y,\alpha t\wedge m_\beta(y)))} d\gamma(y)\frac{dt}{t}\right)^{1/q} d\gamma(x)\\
    &=\int_{\mathbb{R}^n} \left(\iint_{T_\gamma^{\alpha, \beta} (B)} \mathcal{X}_{B(y,\alpha t\wedge m_\beta(y))}(x) \frac{|a(y,t)|^q}{\gamma(B(y,\alpha t\wedge m_\beta(y)))} d\gamma(y)\frac{dt}{t}\right)^{1/q} d\gamma(x)\\
    &=\int_{\mathbb{R}^n} \left(\iint_{\mathbb{R}^{n+1}_+} \mathcal{X}_{B(y,\alpha t\wedge m_\beta(y))}(x) \frac{|a(y,t)|^q}{\gamma(B(y,\alpha t\wedge m_\beta(y)))} d\gamma(y)\frac{dt}{t}\right)^{1/q} \mathcal{X}_B(x) d\gamma(x)\\
    &\leq \left(\int_{\mathbb{R}^n} \iint_{\mathbb{R}^{n+1}_+} \mathcal{X}_{B(y,\alpha t\wedge m_\beta(y))}(x) \frac{|a(y,t)|^q}{\gamma(B(y,\alpha t\wedge m_\beta(y)))} d\gamma(y)\frac{dt}{t} d\gamma(x)\right)^{1/q} \gamma(B)^{1/q'}\\
    &=\left(\iint_{\mathbb{R}^{n+1}_+} \frac{|a(y,t)|^q}{\gamma(B(y,\alpha t\wedge m_\beta(y)))} \int_{\mathbb{R}^n}  \mathcal{X}_{B(y,\alpha t\wedge m_\beta(y))}(x) d\gamma(x) d\gamma(y)\frac{dt}{t} \right)^{1/q} \gamma(B)^{1/q'}\\
    &=\left(\iint_{\mathbb{R}^{n+1}_+}|a(y,t)|^q d\gamma(y)\frac{dt}{t} \right)^{1/q} \gamma(B)^{1/q'}\\
    &=\|a\|_{L^q(\mathbb{R}^{n+1}, d\gamma \frac{dt}{t})}\gamma(B)^{1/q'}\leq 1.
\end{align*}
For $q=\infty,$ \begin{align*}\|S_{\infty, \alpha, \beta}f\|_{L^1(\gamma)}&=\int_{B}\sup\{|a(y,t)|:(y,t)\in \Gamma^{\alpha,\beta}_\gamma(x)\}\, d\gamma(x)\\ &\le \|a\|_{L^\infty(\mathbb{R}^{n+1}_+, d\gamma dt)}\gamma(B)\le 1.\end{align*}\qedhere

 \section{Proofs of Section \ref{atomic}}

\subsection{Preliminary results for the proof of Theorem \ref{teo: descomposicion atomica}}

To prove Theorem \ref{teo: descomposicion atomica}, we need some definitions and some lemmas.

We define the $\eta$-density points of a measurable set $A$ associated to $\lambda$-admissible balls.

Given $A$ a measurable subset of ${\mathbb R}^n$, $\eta>0,$ and $\lambda >0$ we write
\begin{equation}\label{unonuevo}A^{[\eta]}_{\lambda}=\left\{x\in \mathbb R^n: \frac{\gamma(A\cap B)}{\gamma(B)}\geq \eta,  \hbox{ for any } B=B(x,r)\in \mathscr B_\lambda\right\}.\end{equation}

We call $A_\lambda^{[\eta]}$ the set of \textit{$\eta$-density points of  $A$} associated to \textit{$\lambda$-admissible balls}. Note that $A_\lambda^{[\eta]}$ is a closed subset of $\mathbb R^n$ that happens to be contained in $\overline{A}$.

\begin{lemma}\label{cerrado} For any real number $\eta>0$ and $\lambda>0$, $A^{[\eta]}_\lambda$ is a closed subset of $\mathbb R^n$ and $A^{[\eta]}_\lambda \subseteq \overline{A}$.
\end{lemma}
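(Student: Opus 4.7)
To prove Lemma \ref{cerrado}, I split the statement into two claims: the inclusion $A^{[\eta]}_\lambda \subseteq \overline{A}$ and the closedness of $A^{[\eta]}_\lambda$ in $\mathbb{R}^n$. The inclusion is the easier of the two and I would dispatch it first by contraposition. If $x \notin \overline{A}$ there exists $\rho > 0$ with $B(x,\rho) \cap A = \emptyset$, and since $m_\lambda(x) = \lambda(1 \wedge |x|^{-1}) > 0$ for every $x \in \mathbb{R}^n$, one can choose $r = \min(\rho, m_\lambda(x)) > 0$, so that $B(x, r) \in \mathscr{B}_\lambda$ yet $\gamma(A \cap B(x,r))/\gamma(B(x,r)) = 0 < \eta$. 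Hence $x \notin A^{[\eta]}_\lambda$.

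For closedness I would take a sequence $\{x_k\} \subset A^{[\eta]}_\lambda$ with $x_k \to x$ and verify that $x$ satisfies the defining inequality for every admissible radius at $x$. Fix $r \leq m_\lambda(x)$; the goal is
\[
\gamma(A \cap B(x,r)) \;\geq\; \eta\,\gamma(B(x,r)).
\]
The strategy is a sandwich: pick $r' < r$, and for $k$ sufficiently large set $s_k := r' - |x_k - x| > 0$. Since $m_\lambda$ is continuous at $x$ (being a composition of continuous functions), one has $m_\lambda(x_k) > r' > s_k$ for all sufficiently large $k$, so $B(x_k, s_k) \in \mathscr{B}_\lambda$. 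The triangle inequality gives $B(x_k, s_k) \subseteq B(x, r')$, so the hypothesis $x_k \in A^{[\eta]}_\lambda$ yields
\[
\gamma(A \cap B(x, r')) \;\geq\; \gamma(A \cap B(x_k, s_k)) \;\geq\; \eta\,\gamma(B(x_k, s_k)).
\]
A dominated convergence argument, based on the pointwise convergence $\mathcal{X}_{B(x_k, s_k)} \to \mathcal{X}_{B(x, r')}$ off the (Lebesgue-null) sphere $\partial B(x, r')$, then gives $\gamma(B(x_k, s_k)) \to \gamma(B(x, r'))$, and therefore $\gamma(A \cap B(x, r')) \geq \eta\,\gamma(B(x, r'))$ for every $r' < r$.

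The last step is to let $r' \nearrow r$: since $B(x, r')$ increases to $B(x, r)$ (the open ball), continuity of measure from below promotes the inequality to the endpoint radius, in particular to $r = m_\lambda(x)$, so $x \in A^{[\eta]}_\lambda$ as required. I expect the main (though minor) technical obstacle to be the admissibility bookkeeping: the auxiliary ball $B(x_k, s_k)$ must actually lie in $\mathscr{B}_\lambda$, and because admissibility is tested against $m_\lambda(x_k)$ rather than $m_\lambda(x)$, it is essential to work with the strict inequality $r' < r$ and use continuity of $m_\lambda$ to secure the condition $s_k \leq m_\lambda(x_k)$ uniformly for large $k$. Once that is in place, the passage $r' \nearrow r$ and the final statement are purely measure-theoretic limits.
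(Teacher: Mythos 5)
Your proof is correct and follows essentially the same route as the paper's: perturb the ball so that it is admissible at the approximating points $x_k$, pass to the limit, and recover the endpoint radius $r=m_\lambda(x)$ by continuity of measure from below, with the inclusion $A^{[\eta]}_\lambda\subseteq\overline{A}$ being just the contrapositive of the paper's direct argument. If anything, your nesting $B(x_k,s_k)\subseteq B(x,r')$ makes the limit passage slightly more explicit than the paper's version, which keeps the radius fixed, moves the center, and asserts convergence of the density quotient without detail.
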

\begin{proof}
Let $\{x_j\}_{j\in{\mathbb N}}$ be a sequence in $A_\lambda^{[\eta]}$ such that $x_j\to x$ as $j\to \infty$. Fix $0<r<\lambda m(x)$. For the cutoff function $m$ being continuous on $\mathbb{R}^n$, then there exists $N\in \mathbb N$ such that for all $j\geq N$, $0<r<\lambda m(x_j)\leq \lambda m(x)$. Since $x_j\in A_\lambda^{[\eta]}$, we have
\[\frac{\gamma(A\cap B(x_j,r))}{\gamma(B(x_j,r))}\geq \eta,\]
for each $j\geq N$. On the other hand, as $x_j\to x$, we also get
\begin{equation}\label{densidad}
    \frac{\gamma(A\cap B(x,r))}{\gamma(B(x,r))}\geq \eta.
\end{equation}

Now, if $r=\lambda m(x)$, we have $0<r-\varepsilon<r=\lambda m(x)$ for any $0<\varepsilon<r$. Taking into account the case $r<\lambda m(x)$,
\[\frac{\gamma(A\cap B(x,r-\varepsilon))}{\gamma(B(x,r-\varepsilon))}\geq \eta,\]
for every $0<\varepsilon<r$. Due to the continuity of the measure $\gamma$, \eqref{densidad} also holds for $r=\lambda m(x)$.  Thus, $x\in A_\lambda^{[\eta]}$ and this proves that $A_\lambda^{[\eta]}$ is closed.

To see that $A_\lambda^{[\eta]}\subseteq \overline{A}$, we take $x\in A_\lambda^{[\eta]}$ and a neighborhood $V$ of $x$. Then, there exists a ball $B=B(x,r)\subset V$ with $r\leq \lambda m(x)$ such that $\gamma(B\cap A)\geq \eta\, \gamma(B)$. Thus, $A\cap B\neq \emptyset$. Therefore, $V\cap A\neq \emptyset$ for any neighborhood $V$ of $x$. Thus, $x\in \overline{A}$.\\
\end{proof}

\begin{lemma}\label{lema: desigualdad integral, relacion entre eta y beta}
For every $\eta\in (0,1)$ there exists $\bar{\eta}\in (0,1)$ such that for every measurable subset $A$ of $\mathbb R^n$ and every non-negative measurable function $H$ on $\mathbb R^{n+1}_+$, we have
\begin{align*}
    \iint_{R^{(1-\eta)\alpha, (1-\eta)\beta}_\gamma\left(A_{\beta(1+\beta)}^{[\bar{\eta]}}\right)} & H(y,t) d\gamma(y) \frac{dt}{t} \\
    &\lesssim \int_A \left(\iint_{\Gamma_\gamma^{\alpha, \beta}(x)} \frac{H(y,t)}{\gamma(B(y,\alpha t\wedge m_\beta(y))} d\gamma(y) \frac{dt}{t}\right) d\gamma(x).
\end{align*}
\end{lemma}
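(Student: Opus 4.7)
The plan is to reduce the inequality to a pointwise comparison between $\gamma(A\cap B(y,\rho))$ and $\gamma(B(y,\rho))$ for $\rho := \alpha t \wedge m_\beta(y)$, and then to exploit Lemma~\ref{comparacion} together with the standard fact that, on admissible balls, the Gaussian measure behaves essentially like $r^{n}e^{-|z|^2}$.

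First, I apply Fubini to the right-hand side. The condition $(y,t)\in\Gamma_\gamma^{\alpha,\beta}(x)$ is equivalent to $x\in B(y,\rho)$, so
\[
\int_A \iint_{\Gamma_\gamma^{\alpha,\beta}(x)} \frac{H(y,t)}{\gamma(B(y,\rho))}\, d\gamma(y)\,\frac{dt}{t}\, d\gamma(x)
= \iint_{\mathbb{R}^{n+1}_+} H(y,t)\, \frac{\gamma(A\cap B(y,\rho))}{\gamma(B(y,\rho))}\, d\gamma(y)\,\frac{dt}{t}.
\]
Thus it suffices to establish the pointwise bound $\gamma(A\cap B(y,\rho))/\gamma(B(y,\rho)) \gtrsim 1$ for every $(y,t)\in R^{(1-\eta)\alpha,(1-\eta)\beta}_\gamma\bigl(A^{[\bar\eta]}_{\beta(1+\beta)}\bigr)$, with constant depending on $n,\eta,\bar\eta,\alpha,\beta$.

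Next, I fix such a pair $(y,t)$ and pick $x\in A^{[\bar\eta]}_{\beta(1+\beta)}$ with $|x-y|<(1-\eta)\rho$, which is legitimate since $(1-\eta)\alpha t \wedge m_{(1-\eta)\beta}(y) = (1-\eta)\rho$. The auxiliary ball $B(x,\eta\rho)$ enjoys two properties I need. First, $B(x,\eta\rho)\subset B(y,\rho)$, directly from the triangle inequality and $\eta\rho+(1-\eta)\rho=\rho$. Second, $B(x,\eta\rho)\in\mathscr{B}_{\beta(1+\beta)}$: since $|x-y|<(1-\eta)\rho\le m_\beta(y)=\beta m(y)$, Lemma~\ref{comparacion} with $b=\beta$ gives $m(y)<(1+\beta)m(x)$, hence $\eta\rho\le\rho\le m_\beta(y)<\beta(1+\beta)m(x)$.

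Finally, the second property and the definition of $A^{[\bar\eta]}_{\beta(1+\beta)}$ in \eqref{unonuevo} give $\gamma(A\cap B(x,\eta\rho))\ge \bar\eta\,\gamma(B(x,\eta\rho))$. To close the argument I compare the Gaussian volumes of $B(x,\eta\rho)$ and $B(y,\rho)$. Both radii are bounded by a fixed multiple of the cutoff at the relevant center, so the standard estimate $\gamma(B(z,r))\sim_{n,\lambda} r^{n}e^{-|z|^2}$ for $B(z,r)\in\mathscr{B}_\lambda$ applies at both points, while $|x-y|<\beta m(y)$ forces $e^{-|x|^2}\sim_\beta e^{-|y|^2}$ (a one-line expansion of $|z|^2-|x|^2$). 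Therefore
\[
\gamma(B(x,\eta\rho))\sim_{n,\eta,\beta}(\eta\rho)^{n}e^{-|x|^2}\sim_{n,\eta,\beta}\eta^{n}\rho^{n}e^{-|y|^2}\sim_{n,\eta,\beta}\eta^{n}\,\gamma(B(y,\rho)),
\]
and combined with the containment this produces $\gamma(A\cap B(y,\rho))\gtrsim \bar\eta\,\eta^{n}\,\gamma(B(y,\rho))$. Inserting this lower bound back into the Fubini identity of Step~1 yields the claim; any fixed $\bar\eta\in(0,1)$ will work, with the implicit constant depending on $n,\eta,\bar\eta,\alpha,\beta$. The main technical point is the Gaussian-volume comparison at the end, but it reduces to the elementary observation that $|z|^2-|x|^2=O_\beta(1)$ whenever $z$ lies in a $\beta$-admissible ball centered at $x$; everything else is triangle-inequality bookkeeping together with Lemma~\ref{comparacion}.
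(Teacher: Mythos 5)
Your proof is correct, and it follows the same overall skeleton as the paper's (Fubini to reduce to a lower bound on $\gamma(A\cap B(y,\rho))/\gamma(B(y,\rho))$ with $\rho=\alpha t\wedge m_\beta(y)$, then Lemma~\ref{comparacion} plus the admissible-ball volume estimate), but the key step is handled differently and, in fact, more cleanly. The paper applies the density hypothesis to the \emph{large} ball $B(x,\rho)$, which is not contained in $B(y,\rho)$, and must then subtract $\gamma(B(x,\rho)\cap B(y,\rho)^c)$; making the leftover positive forces $\bar\eta$ to be chosen close to $1$, specifically $\bar\eta>1-\tfrac1C$ where $C$ is the doubling constant of $\gamma$ on $\mathscr{B}_{\beta(1+\beta)}$ at scale $\eta$. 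You instead apply the density hypothesis to the \emph{small} ball $B(x,\eta\rho)$, which is genuinely contained in $B(y,\rho)$, so no subtraction is needed; the price is the volume comparison $\gamma(B(x,\eta\rho))\gtrsim_{n,\eta,\beta}\gamma(B(y,\rho))$, which follows from $\gamma(B(z,r))\sim_{n,\lambda}r^ne^{-|z|^2}$ on admissible balls together with $\bigl||x|^2-|y|^2\bigr|\le\beta(2+\beta)$ (the same estimate the paper uses at the end of its own argument anyway). Your route buys the stronger conclusion that \emph{every} $\bar\eta\in(0,1)$ works, with implied constant $\sim\bar\eta\,\eta^n$ times a constant in $n,\beta$, whereas the paper's only yields the inequality for $\bar\eta$ sufficiently close to $1$; since the lemma asserts only existence of $\bar\eta$ and the downstream applications in Theorem~\ref{teo: descomposicion atomica} work with any fixed $\bar\eta$, both versions serve equally well. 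All the supporting checks in your write-up (that membership in $R^{(1-\eta)\alpha,(1-\eta)\beta}_\gamma$ means $|x-y|<(1-\eta)\rho$, that $B(x,\eta\rho)\in\mathscr{B}_{\beta(1+\beta)}$ via Lemma~\ref{comparacion}, and the containment $B(x,\eta\rho)\subset B(y,\rho)$) are accurate.
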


\begin{proof}
By applying Fubini's Theorem, we get
\begin{align*}
    \int_A &\left(\iint_{\Gamma_\gamma^{\alpha, \beta}(x)} \frac{H(y,t)}{\gamma(B(y,\alpha t\wedge m_\beta(y)))} d\gamma(y) \frac{dt}{t}\right) d\gamma(x) \\
    &= \iint_{\mathbb R^{n+1}_+} \left(\int_A \mathcal{X}_{B(y,\alpha t\wedge m_\beta (y))}(x) d\gamma(x)\right)  \frac{H(y,t)}{\gamma(B(y,\alpha t\wedge m_\beta(y)))} d\gamma(y) \frac{dt}{t}\\
    &= \iint_{\mathbb R^{n+1}_+} \frac{\gamma(A\cap B(y,\alpha t\wedge m_\beta (y)))}{\gamma(B(y,\alpha t\wedge m_\beta (y)))} H(y,t) d\gamma(y) \frac{dt}{t}.
\end{align*}
We wish to find a suitable subset of ${\mathbb R}^{n+1}_+$ so that the quotient $\frac{\gamma(A\cap B(y,\alpha t\wedge m_\beta(y)))}{\gamma(B(y,\alpha t\wedge m_\beta(y)))}$ be bounded below by a constant on such a subset and  in this way to obtain the desired inequality.

First, notice that for every $(y,t)\in R_{(1-\eta)\alpha, (1-\eta)\beta}\left(A^{[\bar{\eta}]}_{\beta (1+\beta)}\right)$, there exists $x\in A^{[\bar{\eta}]}_{\beta(1+\beta)}$ such that ${|x-y|<(1-\eta)(\alpha t\wedge m_\beta(y))}$. Thus, from Lemma~\ref{comparacion} with $b=\beta$, $ m(y)\leq (1+\beta)\, m(x)$. 

From the above fact, $\alpha t\wedge m_\beta(y) \leq \beta\,  m(y)\leq \beta (1+\beta)\,  m(x)$, so we have $B(x,\alpha t\wedge m_\beta(y))\in \mathscr{B}_{\beta (1+\beta)}$, and since $x\in A^{[\bar{\eta}]}_{\beta (1+\beta)}$, then 
\[\frac{\gamma(A\cap B(x,\alpha t\wedge m_\beta(y)))}{\gamma(B(x,\alpha t\wedge m_\beta(y)))}\geq \bar{\eta}.\]

Therefore,
\begin{align*}
  &\gamma(A\cap B(y,\alpha t\wedge m_\beta(y)))\geq \gamma(A\cap B(x,\alpha t\wedge m_\beta(y)))\\ &\ \ \ \ -\gamma(B(x,\alpha t\wedge m_\beta(y))\cap B^c(y,\alpha t\wedge m_\beta(y)))\\
  &\geq \bar{\eta} \gamma(B(x,\alpha t\wedge m_\beta(y)))-\gamma(B(x,\alpha t\wedge m_\beta(y))\cap B^c(y,\alpha t\wedge m_\beta(y))).
\end{align*}

From the relationship between $x$ and $y$, we get $B(x,\eta (\alpha t\wedge m_\beta(y)))\subseteq B(y,\alpha t\wedge m_\beta(y))$. On the other hand, from the doubling property of the Gaussian measure on admissible balls from $\mathscr{B}_{\beta (1+\beta)}$, we know that there exists a constant $C=C(1/\eta, \beta)>1$ such that
\[\gamma(B(x,\alpha t\wedge m_\beta(y)))\leq C \gamma(B(x,\eta (\alpha  t\wedge m_\beta(y)))).\]
Hence,
\begin{align*}
\gamma(B(x,\alpha t\wedge m_\beta(y))\cap B(y,\alpha t\wedge m_\beta(y)))&\geq \gamma(B(x,\eta (\alpha t\wedge m_\beta(y))))\\ &\geq \frac{1}{C}\gamma(B(x, \alpha t\wedge m_\beta(y))).\end{align*}
Consequently, 
\begin{align}\label{eq: gamma de F intersec la bola}
  \nonumber \gamma(A\cap B(y,\alpha t\wedge m_\beta(y)))&\geq \bar{\eta} \gamma(B(x,\alpha t\wedge m_\beta(y))) \nonumber \\ &\ \ \ \ -\left(1-\frac{1}{C}\right)\gamma(B(x,\alpha t\wedge m_\beta(y)))\nonumber\\
  &=\left(\bar{\eta}-1+\frac{1}{C}\right)\gamma(B(x,\alpha t\wedge m_\beta(y))).
\end{align}

Again, since $|y-x|<\beta\, m(y)$ and $ m(y)\leq (1+\beta) m(x)$,
\begin{align*}
    \left||y|^2-|x|^2\right|&=\left||y|-|x|\right|\left(|x|+|y|\right)\leq \beta\, m(y)(|x|+|y|) \\
    & \leq \beta \, (1+\beta) m(x)|x|+ \beta \, m(y)|y|\\
    &\leq \beta\, (2+\beta).
\end{align*}
Then, there exists a positive constant $K=e^{-3\beta(2+\beta)}$ such that $\gamma(B(x,\alpha t\wedge m_\beta(y)))\geq K \gamma(B(y,\alpha t\wedge m_\beta(y)))$. Using this inequality in \eqref{eq: gamma de F intersec la bola}, we get 
\[\gamma(A\cap B(y,\alpha t\wedge m_\beta(y)))\geq \left(\bar{\eta}-1+\frac{1}{C}\right)K\, \gamma(B(y,\alpha t\wedge m_\beta(y))). \] Let us call $\lambda=\left(\bar{\eta}-1+\frac{1}{C}\right)K,$ then 
by choosing $\bar{\eta}$ such that $1-\frac{1}{C}<\bar{\eta} <1$, we get the desired estimate
\[\frac{\gamma(A\cap B(y,\alpha t\wedge m_\beta(y)))}{\gamma(B(y,\alpha t\wedge m_\beta(y)))}\geq \lambda,\]
for every $(y,t)\in R_{(1-\eta)\alpha, (1-\eta)\beta}\left(A^{[\bar{\eta}]}_{\beta(1+\beta)}\right)$. Therefore, 
\begin{align*} &\iint_{R_{(1-\eta)\alpha, (1-\eta)\beta}\left(A^{[\bar{\eta}]}_{\beta(1+\beta)}\right)} H(y,t) d\gamma(y) \frac{dt}{t}    \\
& \;\;\; \leq \frac{1}{\lambda} \int_A \left(\iint_{\Gamma_\gamma^{\alpha, \beta}(x)} \frac{H(y,t)}{\gamma(B(y,\alpha t\wedge m_\beta(y))} d\gamma(y) \frac{dt}{t}\right) d\gamma(x).\qedhere
\end{align*}
\end{proof}

\subsection{Proof of Theorem \ref{teo: descomposicion atomica} }\label{elteorema}

In the proof of Theorem \ref{teo: descomposicion atomica}, we will use the following definitions:
\begin{defi}
Let $\lambda>0$ be given. A set $W\subseteq \mathbb{R}^n$ is said to be a Whitney set $\lambda$-admissible if $\hbox{ for any } x\in W,$ we have $\text{dist}(x,W^c)\le \lambda\, m(x),$ where $m(x)$ is the function in Definition \ref{cutting}
\end{defi}
\begin{defi}
Let $A$ be a subset of $\mathbb{R}^n$ and $\lambda>0$ we define the set 
$$A+\mathscr{C}_\lambda=\{c_B: B=B(c_B,r_B)\in \mathscr{B}_\lambda\ \text{and}\ B\cap A\ne \emptyset \},$$ which collects all the centers of balls $\lambda$-admissible that intersect the set $A.$ 
\end{defi}
 \begin{remark}
Observe that $A+\mathscr{C}_\lambda$ is always an open set. In fact, if $c_B\in A+\mathscr{C}_\lambda,$ then there exists $a\in B(c_B, r_B)\cap A.$ And since $a\in B(b,r_B)$ for all $b\in B(c_B, r_B-|a-c_B|),$ then $B(c_B, r_B-|a-c_B|)\subset A+\mathscr{C}_\lambda.$ Thus, the claim follows.
 \end{remark}

In the process of getting an atomic decomposition of a function, we use an important tool which is the Whitney covering lemma. This technique consists in finding a covering of every proper open subset of $\mathbb{R}^n$ by a sequence of dyadic cubes whose diameters are proportional to the distance of the cube from the complementary of that open set. The inconvenience we encounter in the Gaussian context is that we use admissible dyadic cubes to cover each open set and these cubes become very small in size when they are far away from the origin, and we lose the property of proportionality of the size of that cube compared with its distance from the complementary of the open set. To overcome this, in \cite{MNP}, they construct open sets where this covering lemma works.  

{For $1\le q<\infty,$ let} $f\in T^{1,q}_{\alpha,\beta}(\gamma)$ be given. In order to obtain an atomic decomposition of $f$ we use the construction from \cite{MNP} where they get a finite number of disjoint sets $W$ covering ${\mathbb R}^n$, and such that $W+\mathscr{C}_{2^p}$ are $\lambda$-admissible Whitney open sets with $\lambda=2^{2p+2}\sqrt{n}$ for every natural number $p\ge 2.$  

Therefore, $f$ can be expressed as the sum of $f$ restricted to each of these sets $W$ and to prove the theorem it will suffice to show that the function $g(y,t)=f(y,t)\mathcal{X}_W (y)$ can be decomposed into $T^{1,q}_{\alpha, \beta}(\gamma)$ $\delta-$atoms for some $\delta>0.$

To construct the atoms, let us start by defining for every $k\in {\mathbb Z}$,
\[O_k:=\{x\in \mathbb R^n: S_{q,\alpha, \beta}(g)(x)>2^k\},\]
and $F_k:=O_k^c$. Fix $\eta\in (0,1)$ and let $\bar{\eta}$ be the constant in $(0,1)$ found in Lemma~\ref{lema: desigualdad integral, relacion entre eta y beta}. With abuse of notation, we let $O_k^{[\bar{\eta}]}:=\left(F_{k, \beta(1+\beta)}^{[\bar{\eta}]}\right)^c$, where $F_{k, \beta(1+\beta)}^{[\bar{\eta}]}$ denotes the set of $\bar{\eta}$-density points of $F_k$ associated with $\beta(1+\beta)$-admissible balls; see (\ref{unonuevo}) for its definition. Note that $O_k\subseteq O_k^{[\bar{\eta}]}$. In fact, we know that $F_k$ is a closed subset of $\mathbb R^n$ since $O_k$ is open according to the Lemma \ref{continuity}, and $F_{k, \beta(1+\beta)}^{[\bar{\eta}]}\subseteq F_k$, according to the Lemma \ref{cerrado}, then $O_k=F_k^c\subseteq \left(F_{k, \beta (1+\beta)}^{[\bar{\eta}]}\right)^c=O_k^{[\bar{\eta}]}$.
 
{We claim that $O_k^{[\bar{\eta}]} 
\subset W+\mathscr{C}_{2^p}$ for some $p$ that depends on $\beta.$}
We first fix $x\in O_k$ and check that $x\in W+\mathscr{C}_{\beta (1+\beta)}$. In fact, for $x\in O_k$, $S_{q,\alpha, \beta} (g)(x)>2^k>0,$ and thus there exists $(y,t)\in \Gamma_\gamma^{\alpha, \beta}(x)$, such that
\[\mathcal{X}_{B(y,\alpha t\wedge m_\beta(y))}(x) g(y,t)=\mathcal{X}_{B(y,\alpha t\wedge m_\beta(y))}(x) f(y,t) \mathcal{X}_{W}(y)\neq 0.\]
Hence, $y\in W$ and $|x-y|<\alpha t\wedge m_\beta(y) \leq\beta\,  m(y)\leq \beta (1+\beta) m(x)$, according to Lemma \ref{comparacion} with $b=\beta.$ Thus $B(x,\alpha t\wedge m_\beta(y))\in \mathscr{B}_{\beta (1+\beta)}$ and $y\in B(x,\alpha t\wedge m_\beta(y))$, i.e., $W\cap B(x,\alpha t\wedge m_\beta(y))\neq \emptyset$. Thus, $x\in W+\mathscr{C}_{\beta (1+\beta)},$  and therefore $O_k\subset   W+\mathscr{C}_{\beta (1+\beta)}$.

Next, let $x\in O_k^{[\bar{\eta}]}$. Then $x$ is not a $\bar{\eta}$-density point of $F_k$, so there is a ball $B\in \mathscr{B}_{\beta(1+\beta)}$ centered at $x$ such that $\gamma(F_k\cap B)<\bar{\eta}\,  \gamma(B)$. This is only possible if $B$ intersects $O_k=F_k^c$. Since $O_k\subseteq W+\mathscr{C}_{\beta(1+\beta)}$, this means that $B$ intersects $W+\mathscr{C}_{\beta (1+\beta)}$.

Fix an arbitrary $x'\in B\cap (W+\mathscr{C}_{\beta(1+\beta)})$ and let $B'\in \mathscr{B}_{\beta(1+\beta)}$ be any admissible ball centered at $x'$ and intersecting $W$. Since $x'\in B$ and $B\in \mathscr{B}_{\beta(1+\beta)}$, it follows that $|x-x'|<r_B\le \beta (1+\beta)\, m(x).$ 
%so, by Lemma~\ref{comparacion}, $m(x')\leq (3-\eta)m(x)$. 

Also, since $B'$ belongs to $\mathscr{B}_{\beta (1+\beta)}$ and intersects $W$ (that is, there exists $x''\in B'\cap W$ with $\text{dist}(x',W)\leq |x'-x''|<r_{B'}\leq \beta (1+\beta)\, m(x')$), we have $\text{dist}(x',W)<\beta (1+\beta)\, m(x')$. It follows that 
\[\text{dist}(x,W)\leq |x-x'|+\text{dist}(x',W)<\beta(1+\beta)\, m(x)+\beta (1+\beta)\, m(x').\]
If we set $b=\beta(1+\beta),$ $y=x$ and $x=x'$ in Lemma \ref{comparacion}, we have $$m(x')\le (1+\beta (1+\beta))\, m(x).$$ Thus
\begin{align*}\text{dist}(x,W)&<[\beta(1+\beta)+\beta (1+\beta)(1+\beta (1+\beta))]\, m(x)\\ &<\beta (1+\beta) (2+\beta(1+\beta))\, m(x).
\end{align*}
Let us take $p$ the least non-negative integer such that $2^p\ge \beta (1+\beta)(2+\beta(1+\beta)).$ Therefore $\text{dist}(x,W)<2^p\, m(x).$
This means that there exists $z\in W$ with $|x-z|<2^p\, m(x)$, that is, $W\cap B(x,2^p m(x))\neq \emptyset$. Hence, $x\in W+\mathscr{C}_{2^p}$ for every $x\in O_k^{[\bar{\eta}]}.$ Hence $O_k^{[\bar{\eta}]}\subseteq W+\mathscr{C}_{2^p}$.

{\bf Now, we will prove that $O_{k+1}^{[\bar{\eta}]}\subset O_k^{[\bar{\eta}]}.$}
Notice that $O_k^{[\bar{\eta}]}=\{x\in \mathbb{R}^n: \mathcal{M}^c_{\beta (1+\beta)}(\mathcal{X}_{O_k})(x)>1-\bar{\eta})\}.$ Now, $\mathcal{M}^c_{\beta (1+\beta)} f(x) $ is weak-type $(1,1)$  with respect to the measure $\gamma$ (Remark \ref{wilfredo}), therefore $\gamma(O_k^{[\bar{\eta}]})\le C_{n,\bar{\eta}, \beta}\, \gamma(O_k)$ and also since $O_{k+1}\subset O_k,$ then $O_{k+1}^{[\bar{\eta}]}\subset O_k^{[\bar{\eta}]}.$

{\bf Next, we show that $\text{supp}(g)\subseteq \bigcup_{k\in {\mathbb Z}}T^{(1-\eta)\alpha, (1-\eta)\beta}_\gamma (O_k^{[\bar{\eta}]}).$}
In fact, 
from Lemma~\ref{lema: desigualdad integral, relacion entre eta y beta} applied to $H(y,t)=|g(y,t)|^q$ we get 
\begin{align}\label{soporte}
    \iint_{R_\gamma^{(1-\eta)\alpha, (1-\eta)\beta}(F_{k,\beta(1+\beta)}^{[\bar{\eta}]})} & |g(y,t)|^q   d\gamma(y) \frac{dt}{t} \nonumber\\
    &\lesssim\int_{F_{k}} \left(\iint_{\Gamma_\gamma^{\alpha, \beta}(x)} \frac{|g(y,t)|^q}{\gamma(B(y,\alpha t\wedge m_\beta(y))))} d\gamma(y) \frac{dt}{t}\right) d\gamma(x)\\
    &=\int_{F_k} (S_{q,\alpha, \beta}\, g (x))^q \, d\gamma(x)\le C_n\, 2^{q k}.\nonumber
\end{align}
Observe that $\mathbb{R}^{n+1}_+\setminus \bigcup_{k\in {\mathbb Z}}T^{(1-\eta)\alpha, (1-\eta)\beta}_\gamma (O_k^{[\bar{\eta}]})=\bigcap_{k\in {\mathbb Z}} R_\gamma^{(1-\eta)\alpha, (1-\eta)\beta} (F_{k,\beta (1+\beta)}^{[\bar{\eta}]})=:\Omega,$ then, taking into account the inequality (\ref{soporte}), $$\|\mathcal{X}_\Omega\, g\|_{L^{q}(\mathbb{R}^{n+1}_+,d\gamma\frac{dt}{t})}\le C_n\, 2^k,\ \hbox{ for any } k\in {\mathbb Z}.$$ Thus, as $k\to -\infty,$ we get $g(y,t)=0$ for almost every $(y,t)\in \Omega.$ So, the support of $g$ is contained in $$\bigcup_{k\in {\mathbb Z}}T^{(1-\eta)\alpha, (1-\eta)\beta}_\gamma (O_k^{[\bar{\eta}]})=
{\bigcup_{k\in {\mathbb Z}}\!\!\!\!\!\!\!{\hbox{\tiny D}}} \; [T^{(1-\eta)\alpha, (1-\eta)\beta}_\gamma(O_k^{[\bar{\eta}]})\setminus T_\gamma^{(1-\eta)\alpha, (1-\eta)\beta}(O_{k+1}^{[\bar{\eta}]})],$$
since $O_{k+1}^{[\bar{\eta}]}\subset O_k^{[\bar{\eta}]}.$

{\bf Proof of the atomic decomposition.}
Since $O_k^{[\bar{\eta}]}\subset  W+\mathscr{C}_{2^p}$ and $W+\mathscr{C}_{2^p}$ is an open $\lambda$- admissible Whitney set, $O_k^{[\bar{\eta}]}$ also turns out to be an open $\lambda$-admissible Whitney set.

Using the classical Whitney lemma (see \cite{Stein-Singular integrals and differentiability properties}) there exists a sequence of disjoint dyadic cubes $\{Q_j^k\}_{j\in {\mathbb N}}$ such that 

\begin{enumerate} 
\item $O_k^{[\bar{\eta}]}=\bigcup_j Q_j^k;$
\item $\text{diam} (Q_j^k)\le \text{dist}(Q_j^k, (O_k^{[\bar{\eta}]})^c)\le 4 \text{diam}(Q_j^k)$ and $d_j^k:=\text{diam}(Q_j^k)\le \lambda m (c_{Q_j^k}),$ being $c_{Q_j^k}$ the center of $Q_j^k.$
\end{enumerate}

 We have $O_k^{[\bar{\eta}]}=\bigcup_{j\in {\mathbb N}}Q_j^k$ and we define $B_j^k$ the ball with center $c_{Q_j^k}$ and radius $C>0$ times $d_j^k$ the diameter of $Q_j^k.$ The constant $C>0$ will be chosen large enough so that 
\begin{equation}\label{tiendainclusion}T^{(1-\eta)\alpha, (1-\eta)\beta}_\gamma(O_k^{[\bar{\eta}]})\subset \bigcup_{j\in {\mathbb N}}T^{\alpha, \beta}_\gamma(B_j^k)\cap (Q_j^k\times (0,\infty)).\end{equation} 

Choice of the constant $C$. Taking $(y,t)\in T^{(1-\eta)\alpha, (1-\eta)\beta}_\gamma(O_k^{[\bar{\eta}]}),$ then $(1-\eta)(\alpha t\wedge m_\beta(y)\le \text{dist}(y, (O_k^{[\bar{\eta}]})^c)$ and $y\in O_k^{[\bar{\eta}]},$ hence, there exists $j\in {\mathbb N}$ such that $y\in Q_j^k.$ So $(y,t)\in Q_j^k\times (0,\infty).$ Check that for $C>0$ large enough, $(y,t)\in T_\gamma^{\alpha, \beta}(B_j^{k}),$ as well. Given $z\notin B_j^k,$ we have $$(1-\eta)(\alpha t\wedge m_\beta (y))\le \text{dist}(y, (O_k^{[\bar{\eta}]})^c)\le \text{dist}(Q_j^k,(O_k^{[\bar{\eta}]})^c)+d_j^k\le 5 d_j^k.$$ On the other hand, $|y-z|\ge |z-c_{Q_j^k}|-|y-c_{Q_j^k}|\ge (C-1)d_j^k.$ Combining the above inequalities we obtain
$$\alpha t\wedge m_\beta (y)\le \frac{5}{(1-\eta)(C-1)}|y-z|,$$ for all $z\in (B_j^k)^c.$ Thus, by taking $C\ge 1+\frac{5}{1-\eta},$ we obtain that $(y,t)\in T_\gamma^{\alpha, \beta}(B_j^k).$ With this, the inclusion is proved.

Taking into account (\ref{tiendainclusion}), we can write $$T^{(1-\eta)\alpha, (1-\eta)\beta}_\gamma(O_k^{[\bar{\eta}]})\setminus T_\gamma^{(1-\eta)\alpha, (1-\eta)\beta}(O_{k+1}^{[\bar{\eta}]}),$$ as a disjoint union $\bigcup_{j\in {\mathbb N}}\Delta_j^k,$
where $$\Delta_j^k=T_\gamma^{\alpha, \beta}(B_j^k)\cap [Q_j^k\times (0,\infty)]\cap [T^{(1-\eta)\alpha, (1-\eta)\beta}_\gamma(O_k^{[\bar{\eta}]})\setminus T_\gamma^{(1-\eta)\alpha, (1-\eta)\beta}(O_{k+1}^{[\bar{\eta}]})].$$

{\bf  Now, we are going to construct the atoms associated with the open set $O_k^{[\bar{\eta}]}.$} Let us define the atoms by writing $a_j^k= \frac{g \mathcal{X}_{\Delta_j^k}}{\gamma(B_j^k)^{1/q'}(\mu_j^k)^{1/q}},$ where $\mu_j^k=\iint_{\Delta_j^k}|g(y,t)|^q\, d\gamma(y)\frac{dt}{t}$ and setting $\lambda_j^k=\gamma(B_j^k)^{1/q'}(\mu_j^k)^{1/q}.$ We then obtain the following. \begin{equation}\label{ladescomposicion}g=\sum_{k\in {\mathbb Z}}\sum_{j\in {\mathbb N}}\lambda_j^k\, a_j^k.\end{equation} Observe that $a_j^k$ is a $T^{1,q}_{\alpha, \beta}(\gamma)$ atom associated with the ball $B_j^k$ that happens to be in $\mathscr{B}_{C\lambda}$ because of its radius $C d_j^k\le C \lambda m(c_{Q_j^k}).$ Therefore, (\ref{ladescomposicion}) will be the required atomic decomposition if we prove that $\sum_{k,j}\lambda_j^k\le K \|g\|_{T^{1,q}_{\alpha, \beta}(\gamma)}.$

 {\bf Proof of $\sum_{k,j}\lambda_j^k\le K \|g\|_{T^{1,q}_{\alpha, \beta}(\gamma)}.$}
Let us first estimate $\mu_j^k,$ $$\mu_j^k=\iint_{\Delta_j^k}|g(y,t)|^q\, d\gamma(y)\frac{dt}{t}\le \iint_{T_\gamma^{\alpha, \beta}(B_j^k)\cap (T_\gamma^{(1-\eta)\alpha, (1-\eta)\beta}(O_{k+1}^{[\bar{\eta}]}))^c}|g(y,t)|^q\, d\gamma(y)\frac{dt}{t}.$$ 
Observe that $(T_\gamma^{(1-\eta)\alpha, (1-\eta)\beta}(O_{k+1}^{[\bar{\eta}]}))^c=R_\gamma^{(1-\eta)\alpha, (1-\eta)\beta}(F_{k+1}^{[\bar{\eta}]}),$  
therefore we apply Lemma \ref{lema: desigualdad integral, relacion entre eta y beta}, with $H(y,t)=\mathcal{X}_{T_\gamma^{\alpha,\beta}(B_j^k)}(y,t)\, |g(y,t)|^q,$ in order to obtain that 
\begin{align*}\iint_{(T_\gamma^{(1-\eta)\alpha, (1-\eta)\beta}(O_{k+1}^{[\bar{\eta}]}))^c} &\mathcal{X}_{T^{\alpha,\beta}_\gamma (B_j^k)}(y,t)\, |g(y,t)|^q\, d\gamma(y)\frac{dt}{t}\\
&\le C \int_{B_j^k\cap F_{k+1}}(S_{q,\alpha, \beta}(g)(x))^q\, d\gamma(x) 
\end{align*}
due to the fact that $\text{supp}(S_{q,\alpha,\beta}(\mathcal{X}_{T^{\alpha, \beta}_\gamma(B_j^k)}\, g))\subseteq B_j^k$. The right side of the above inequality is bounded by $\gamma(B_j^k)\, (2^{k+1})^q.$  
Thus, $\mu_j^k\le C \gamma(B_j^k) 2^{q k}.$ Hence,
\begin{align*}
\sum_{k\in {\mathbb Z}}\sum_{j\in {\mathbb N}}\lambda_j^k & \lesssim \sum_{k\in {\mathbb Z}}\sum_{j\in {\mathbb N}}\gamma(B_j^k)^{1/q'} \gamma(B_j^k)^{1/q} 2^k\\
&\lesssim \sum_{k\in {\mathbb Z}} 2^k \sum_{j\in {\mathbb N}} \gamma(B_j^k)\\ &\lesssim \sum_{k\in {\mathbb Z}}2^k \sum_{j\in {\mathbb N}}\gamma (Q_j^k)\\ & \lesssim\sum_{k\in {\mathbb Z}}2^k \gamma (O_k^{[\bar{\eta}]})\\ &\lesssim \sum_{k\in {\mathbb Z}} 2^k \gamma(O_k)\lesssim \|S_{q,\alpha, \beta} \, g\|_{L^1(\gamma)}. 
\end{align*}
The proof of the theorem for the case $1\le q<\infty$ is concluded.

    {For $q=\infty,$ to get an atomic decomposition of a function $f$ in $T^{1,\infty}_{\alpha, \beta}(\gamma)$ we should pay attention that in this space the functions we take are continuous on $\mathbb{R}^{n+1}_+,$ thus the rough truncation procedure we have done before does not apply here. So we modify the construction done in \cite{MNP} by replacing the disjoint dyadic cubes for open balls with the same centers as the cubes' and radii half the diameter of the cubes multiply by $1+\epsilon.$ With these balls now we apply the procedure done in \cite{MNP} and for $\epsilon > 0$ small we come up with the same type of sets. The advantage now is that we get a finite open cover of $\mathbb{R}^n$ and we can construct a partition of unity associated to it.}
    
    {Let $W$ be one of these open sets from the open cover of $\mathbb{R}^n$ and $\phi$ one of the elements of the partition of unity such that $\text{supp}(\phi)\subset W.$ We call $g(y,t)=f(y,t)\phi(y)\in \mathscr{C}(\mathbb{R}^{n+1}_+).$ Now we can proceed as we have done before. First, we consider the open set $O_k=\{x\in \mathbb{R}^n: S_{\infty, \alpha, \beta}\,  g(x)>2^k\}$ and prove that it is contained in $W+\mathscr{C}_{\beta (\beta+1)}$ which is also an open $\lambda$-admissible Whitney set. In this case, we apply the Whitney lemma with balls instead of dyadic cubes. So, there exists a universal constant $C>1$ such that for the open set $O_k$ there is a sequence of $\lambda$-admissible balls $\{B_j^k=B(c_j^k,r_j^k)\}_j$ with bounded overlap verifying:
   \begin{itemize}
\item $O_k=\bigcup_j B_j^k,$
\item $C B_j^k \cap O_k^c\ne \emptyset,$ for every $j,$
\item the elements of $\{C^{-1}B_j^k\}_j$ are mutually disjoint.
   \end{itemize}
   See \cite[Theoreme 1.3, Chapitre III.1]{CW}, and \cite{artur}.}
 
   Let us prove the following inclusions.
   \begin{align}
B_j^k\times (0,\infty)\cap T_\gamma^{\alpha, \beta}(O_k)&\subset B_j^k\times (0,\infty)\cap \overset{\circ}{T}_\gamma^{\alpha/2,\beta/2}(O_k)\\ &\subset \overset{\circ}{T}_\gamma^{\alpha, \beta}((2C+3)B_j^k).\nonumber
   \end{align}
   Pick $z\in C B_j^k\cap O_k^c.$ Let $(y,t)\in B_j^k\times (0,\infty)\cap T_\gamma^{\alpha, \beta}(O_k)$ be given, then \begin{equation*}
\frac{1}{2}(\alpha t \wedge \beta m(y))<\alpha t \wedge m_\beta (y)\le \text{dist}(y,O_k^c)\le |y-z|\le (1+C)r_j^k. 
   \end{equation*}
  Thus, $(y,t)\in \overset{\circ}{T}^{\alpha/2, \beta/2}(O_k)$ and $\alpha t \wedge m_\beta(y)< 2 (C+1)r_j^k\le \text{dist}(y,((2C+3)B_j^k)^c).$
   Therefore, $(y,t)\in \overset{\circ}{T}_\gamma^{\alpha, \beta}((2C+3)B_j^k).$ 

   Let us call $\widetilde{T}_{\gamma, j}^{\alpha, \beta}(O_k)=B_j^k\times (0,\infty)\cap \overset{\circ}{T}^{\alpha/2, \beta/2}_\gamma (O_k)$ an open set and define the open set $$\Delta_j^k=\widetilde{T}_{\gamma, j}^{\alpha, \beta}(O_k)\setminus T_\gamma^{\alpha, \beta}(O_{k+1}).$$ 
   Associated to the family of open sets $\{\Delta_j^k\}_j$ we have a continuous partition of unity $\{\varphi_j^k\}_j$ with $\text{supp}(\varphi_j^k)\subset \Delta_j^k.$ It is well-known that
   \begin{equation*}
\{(y,t)\in \mathbb{R}^{n+1}_+: g(y,t)\ne 0\}\subset \bigcup_{k}T^{\alpha,\beta}_\gamma (O_k)\setminus T^{\alpha, \beta}_\gamma(O_{k+1})\subset \bigcup_{k,j}\Delta_j^k.
   \end{equation*}
   In fact, if $g(y,t)\ne 0,$ there exists $k\in {\mathbb Z}$ such that $2^k<|g(y,t)|\le 2^{k+1}.$ Then for all $x\in B(y, \alpha t\wedge m_\beta(y)),$ $(y,t)\in \Gamma_\gamma^{\alpha, \beta}(x),$ hence $S_{\infty, \alpha,\beta}\, g(x)\ge |g(y,t)|>2^{k}.$ Thus $B(y,\alpha t \wedge m_\beta(y))\subset O_k.$ Therefore,  $(y,t)\in T^{\alpha, \beta}_\gamma (O_k)$. And now, reasoning by contradiction, taking into account the other inequality, we also get $(y,t)\notin T^{\alpha,\beta}_\gamma(O_{k+1}).$ The second inclusion is immediate.
   
   Finally, it is natural to write \begin{equation*}
g(y,t)=\sum_{k, j}g (y,t)\varphi_j^k(y,t)=\sum_{k,j} 2^{k+1}\gamma((2C+3)B_j^k)a_j^k(y,t)= \sum_{k,j}\mu_j^k a_j^k(y,t),
   \end{equation*}
   being $a_j^k (y,t)=\frac{g (y,t) \varphi_j^k (y,t)} {2^{k+1}\gamma((2C+3)B_j^k)}$ {{and $\mu_j^k=2^{k+1}\gamma((2C+3)B_j^k). $}}}

   Let us see that $a_j^k$ is a $T_{\alpha, \beta}^{1,\infty}(\gamma)$ $\delta$-atom, for some $\delta>0.$
According to the definition of $a_j^k,$ $\text{supp}(a_j^k)\subset T^{\alpha, \beta}_\gamma( (2C+3)B_j^k),$ $\|a_j^k\|_\infty\le \frac{\|g \mathcal{X}_{\Delta_j^k}\|_\infty}{2^{k+1} \gamma((2C+3)B_j^k)}\le \gamma((2C+3)B_j^k)^{-1},$ and it is continuous. 

Let us prove that $\displaystyle \sum_{k,j}|\mu_j^k|\le C \|g\|_{T^{1,\infty}_{\alpha, \beta}(\gamma)}.$ Indeed, \begin{align*}
\sum_{k\in{\mathbb Z}} 2^{k+1}\sum_{j=1}^\infty \gamma(C(2C+3)C^{-1}B_j^k))&\lesssim \sum_{k\in {\mathbb Z}}2^{k+1} \sum_{j=1}^\infty
\gamma(C^{-1}B_j^k)\\&\lesssim \sum_{k\in {\mathbb Z}}2^k \gamma(O_k)\\ &\lesssim \|S_{\infty,\alpha, \beta}\, g\|_{L^1(\gamma)}.\end{align*}

The first inequality is due to the fact that $\gamma$ doubles on admissible balls. For the second inequality, we use that the balls $\{C^{-1}B_j^k\}_j$ are mutually disjoint and its union is contained in $O_k.$ With this, we end the proof of Theorem \ref{teo: descomposicion atomica}.
 
\subsection{Proof of Theorem \ref{coneccion1} }\label{edison1}

Before proving Theorem \ref{coneccion1}, we shall prove the existence of
an 
operator that maps measurable functions on $\mathbb{R}^{n+1}_+$ to measurable functions on $\mathbb{R}^n.$
For that, 
we fix a function $\phi\in \mathscr{C}^1_c(\mathbb{R}^n)$ that satisfies (see \cite{cms}):
\begin{itemize}
\item $\phi$ has compact support in the unit ball,
\item $|\phi(x)|\le M,$ $|\nabla \phi (x)|\le M,$ for some positive constant $M$ and $\hbox{ for any } x\in{\mathbb R}^n,$ 
\item $\int_{{\mathbb R}^n}\phi(x)\, dx=0,$
\item $|\hat{\phi}(\xi)|\le M \left(|\xi|\wedge \frac{1}{|\xi|}\right).$
\end{itemize}
We shall consider then the operator $\pi_\phi$ defined as
\begin{align}\label{conexionH1}
\pi_\phi f(x) &= e^{|x|^2}\int_0^\infty \mathcal{X}_D(\cdot, t) f(\cdot, t)e^{-|\cdot|^2}\ast \phi_{t}(x)\, \frac{dt}{t},
\end{align}
where $D=\{(y,t)\in \mathbb{R}^{n+1}_+: t<m(y)\}$ and $\phi_t (x) = \frac{1}{t^n} \phi(\frac{x}{t}).$
So,\begin{align*} \pi_\phi f (x)
&= e^{|x|^2}\int_0^1 \int_{\{y\in \mathbb{R}^n: t<m(y)\}} f(y,t)\phi_t(x-y) d\gamma (y)\frac{dt}{t}.\nonumber
\end{align*} For instance, this formula makes sense for the space $\mathscr{C}_c({\mathbb R}^{n+1}_+).$ In fact, let $f\in \mathscr{C}_c({\mathbb R}^{n+1}_+)$ be given, then there exist $R\ge 1$ and $0<a<b<\infty,$ such that $\text{supp}(f)\subseteq K:=\bar{B}(0,R)\times [a,b],$ then
\begin{align*}
|\pi_\phi f (x)|&\le e^{|x|^2} \int_a^{1\wedge b}\int_{|y|\le R}|f(y,t)|\left|\phi\left(\frac{x-y}{t}\right)\right|\frac{1}{t^n}\, e^{-|y|^2} dy\, \frac{dt}{t} \\ &\le e^{|x|^2}\|f\|_\infty M \int_a^\infty \int_{|y|\le R}\frac{1}{t^{n+1}}\, dy \, dt<\infty.
\end{align*}

\noindent {\bf Proof of (i)}

\begin{itemize}
\item  Step 1: We shall prove that $\pi_\phi$ maps a $T^{1,2}(\gamma)$ $\delta$-atom to a bounded multiple of an $H^1(\gamma)$ atom. 

To see this, suppose $a(y,t)$ is a $T^{1,2}(\gamma)$ $\delta$-atom associated with a ball $B,$ i.e. 
$B$ is such that $r_B\le \delta m(c_B).$ We will prove that $\pi_\phi (a) $
has compact support, $\gamma$-average 0 and $L^2 (\gamma)$
norm bounded by $\frac{C}{\gamma(B)^{1/2}} $. In fact, 
\begin{itemize}
\item Bounded support

Being $a(y,t)$ an atom in $T^{1,2}(\gamma)$, then it is supported in a Gaussian tent 
$ 
T_\gamma(B)$ 
Thus, $\displaystyle \pi_\phi a (x)=e^{|x|^2}
\iint_{T^D_\gamma(B)} a(y,t) \phi_t(x-y)d\gamma(y)\frac{dt}{t}.$  

Since $\phi$ is supported in $|x|\le 1,$ it follows that $\pi_\phi (a)$ is supported in a ball $B^*,$ having the same center as $B,$ but twice its radius. Indeed, for $|x-c_B|\ge 2r_B$ and $(y,t)\in T_\gamma^D(B)$ then
$$|x-y|\ge |x-c_B|-|y-c_B|\ge 2r_B-r_B+t=r_B+t.$$ Thus $\left|\frac{x-y}{t}\right|\ge 1+\frac{r_B}{t}>1.$ Hence $\phi\left(\frac{x-y}{t}\right)=0$ and so $\pi_\phi a (x)=0.$ Thus, 
$\text{supp}(\pi_\phi a)\subseteq B^*.$

\item $\gamma$-average zero 

$\int_{{\mathbb R}^n}\pi_\phi a (x)\, d\gamma(x)=0$ since $\int_{{\mathbb R}^n} \phi (x)\, dx=0.$ Indeed, by using Fubini's theorem we obtain that
\begin{align*}
\int_{{\mathbb R}^n} \pi_\phi a (x)e^{-|x|^2}\, dx& = \int_{{\mathbb R}^n}\iint_{T^D_\gamma(B)} a(y,t)\phi_{t}(x-y)\, d\gamma(y)\, \frac{dt}{t}\, dx
\\ &= \iint_{T^D_\gamma(B)}a(y,t)\int_{{\mathbb R}^n}\phi_{t}(x-y)\, dx\, d\gamma(y)\, \frac{dt}{t}\\ &=\iint_{T^D_\gamma(B)}a(y,t)\left(\int_{{\mathbb R}^n}\phi(z)\, dz\right)\, d\gamma(y)\, \frac{dt}{t}=0
\end{align*}

\item $L^2(\gamma)$ norm boundedness

Finally, let us prove that 
\begin{align*}
\int_{{\mathbb R}^n} |\pi_\phi a (x)|^2 d\gamma (x)&\le C \iint_{T_\gamma(B)}|a(y,t)|^2 \, d\gamma(y)\, \frac{dt}{t}.
\end{align*}
So, $\|\pi_\phi a\|_{L^2(\gamma)}\le \frac{C}{\gamma(B)^{1/2}}.$

For $g\in C_c^\infty ({\mathbb R}^n)$ and defining $\widetilde{\phi}(z)=\phi(-z),$ we have
\begin{align*}
\left|\int_{{\mathbb R}^n}\pi_\phi a (x)\, g(x)\, d\gamma(x)\right|&= \left|\int_{{\mathbb R}^n}\iint_{T^D_\gamma(B)}  a(y,t) \phi_{t} (x-y)\, d\gamma(y)\frac{dt}{t}g(x)\, dx\right|\\ & =\left| \iint_{T^D_\gamma(B)}a(y,t) \int_{{\mathbb R}^n}\phi_{t}(x-y)g(x)dx\ d\gamma(y)\frac{dt}{t}\right|\\ &= \left|\iint_{T^D_\gamma(B)} a(y,t)\  g\ast \widetilde{\phi}_{t}(y)\, d\gamma(y)\frac{dt}{t}\right|\\ &\le \iint_{{\mathbb R}^{n+1}_+} |F(y,t)|| G(g)(y,t)|\, d\gamma(y)\frac{dt}{t}
\end{align*}
with $F(y,t)=\mathcal{X}_{T_\gamma(B)}(y,t)\,  a(y,t)$ and $G(g)(y,t)=\mathcal{X}_{D}(y,t)\, g\ast \widetilde{\phi}_{t}(y). $ Then
\begin{align*}
& \iint_{{\mathbb R}^{n+1}_+}   |F(y,t)| |G(g)(y,t)|\, d\gamma(y)\frac{dt}{t}
\\ &= \iint_{{\mathbb R}^{n+1}_+}\frac{|F(y,t)|  {|G(g)(y,t)|}  }{\gamma (B(y,t\wedge m(y)))}  \int_{{\mathbb R}^n}\mathcal{X}_{\Gamma_\gamma(x)}(y,t) d\gamma(x)\, d\gamma(y)\frac{dt}{t}\\ 
& = \int_{{\mathbb R}^n}\left( \iint_{{\mathbb R}^{n+1}_+}\mathcal{X}_{\Gamma_\gamma(x)} (y,t)\frac{|F(y,t)||G(g)(y,t)|}{\gamma (B(y,t\wedge m(y)))}\right.  \left.    d\gamma(y)\frac{dt}{t}\right) d\gamma (x).
\end{align*}
Now, by applying Schwartz's inequality twice we get 
\begin{align*}
\left|\int_{{\mathbb R}^n}\pi_\phi a (x)\, g(x)\, d\gamma(x)\right|& \le \int_{{\mathbb R}^n}S_{2}F(x)S_{2}G(g)(x) d\gamma(x)\\ &\le \|S_{2}F\|_{L^2(\gamma)}\|S_{2}G(g)\|_{L^2(\gamma)}.
\end{align*}
It is immediate to see that $$\|S_{2}F\|_{L^2(\gamma)}= \|F\|_{T^{1,2}(\gamma)}=\left(\iint_{T_\gamma (B)}|a(y,t)|^2\, d\gamma(y)\frac{dt}{t}\right)^{1/2}\le \frac{1}{\gamma(B)^{1/2}}.$$

The hardest part to prove is the following inequality
$$\|S_{2}G(g)\|_{L^2(\gamma)}\le A \| g \|_{L^2 (\gamma)}.$$ But this is a consequence of the standard theory of singular integrals in a harmonic analysis context.  Indeed,
let $\{B_j\}$ be a countable family of admissible balls covering $\mathbb{R}^n$ (see \cite{dalsco}), i.e., we set $B_j=B(c_{B_j},r_{B_j})$ the ball of center $c_{B_j}$ and radius $r_{B_j}$ with $r_{B_j}\le m(c_{B_j})$ and call $B^{**}_j=4B_j$ then the family $\{B^{**}_j\}$ has bounded overlap. 
Hence, 
\begin{align*}
\|S_{2}G(g)\|_{L^2(\gamma)}^2&=\int_{\mathbb{R}^n}(S_{2} G(g)(x))^2d\gamma(x)\\ &=\int_{\mathbb{R}^n}\iint_{\Gamma_\gamma(x)}\frac{|G(g) (y,t)|^2}{\gamma(B(y,t\wedge m(y)))}d\gamma(y)\frac{dt}{t}d\gamma(x)\\
&= \int_{\mathbb{R}^n}\iint_{{\Gamma}^D_\gamma(x)}\frac{|g\ast \widetilde{\phi}_t(y)|^2}{\gamma(B(y,t))}d\gamma(y)\frac{dt}{t}d\gamma(x)\\
&=\iint_{D}\frac{|g\ast \widetilde{\phi}_t(y)|^2}{\gamma(B(y,t))}\gamma(B(y,t))d\gamma(y)\frac{dt}{t}\\
&= \int_{\mathbb{R}^n}\int_0^{m(y)}|g\ast \widetilde{\phi}_t(y)|^2\frac{dt}{t}d\gamma(y)\\
&\le \sum_j \int_{B_j}\int_0^{m(y)}|g\ast \widetilde{\phi}_t(y)|^2\frac{dt}{t}d\gamma(y)\\ &\le \sum_j e^{-|c_{B_j}|^2}\int_{B_j}\int_0^{2m(c_{B_j})}|g\ast \widetilde{\phi}_t(y)|^2\frac{dt}{t}dy\\ &= \sum_j e^{-|c_{B_j}|^2}\int_{B_j}\int_0^{2m(c_{B_j})}|\mathcal{X}_{B^{**}_j}g\ast \widetilde{\phi}_t(y)|^2\frac{dt}{t}dy
\\&\le \sum_j e^{-|c_{B_j}|^2}\int_{B_j}\int_0^\infty|\mathcal{X}_{B^{**}_j}g\ast \widetilde{\phi}_t(y)|^2\frac{dt}{t}dy\\ &\le\sum_j e^{-|c_{B_j}|^2}\int_0^\infty \int_{\mathbb{R}^n}|\widehat{\mathcal{X}_{B_j^{**}} g}(\xi)|^2 |\widehat{\widetilde{\phi}}(t\xi)|^2\, d\xi \frac{dt}{t}
\end{align*}
\begin{align*}
&\le \left(\sup_{\xi}\int_0^\infty |\widehat{\widetilde{\phi}}(t\xi)|^2\frac{dt}{t}\right) \sum_j e^{-|c_{B_j}|^2} \int_{\mathbb{R}^n}|\widehat{\mathcal{X}_{B_j^{**}} g}(\xi)|^2 d\xi\\ &=  \left(\sup_{\xi}\int_0^\infty |\widehat{\widetilde{\phi}}(t\xi)|^2\frac{dt}{t}\right) \sum_j e^{-|c_{B_j}|^2} \int_{B_j^{**}}| g(y)|^2 dy\\  & \le  M^2 \widetilde{C} \|g\|_{L^2(\gamma)}^2.
\end{align*}

\end{itemize}

\item Step 2: Let $f\in T^{1,2}(\gamma)$ be given. By Theorem \ref{teo: descomposicion atomica}, there exist a sequence of scalars $\{\lambda_m\}$ and a sequence of $\delta$-atoms $\{a_m\}$ such that $f=\sum_m \lambda_m a_m$ with $\sum_m |\lambda_m|\le A \|f\|_{T^{1,2}(\gamma)}.$ Then the series $\sum_m \lambda_m \pi_\phi (a_m)$ converges in $L^1(\gamma)$ since $\|\pi_\phi(a_m)\|_{L^1(\gamma)}\le C$ for all $m.$ Therefore, by defining $\pi_\phi(f)=\sum_m C\lambda_m  C^{-1}\pi_\phi (a_m)\in H^1(\gamma),$ we have $\pi_\phi:T^{1,2}(\gamma)\to H^1(\gamma)$ a linear bounded operator. In fact, $\|\pi_\phi (f)\|_{H^1(\gamma)}\le \sum_m C|\lambda_m|\le C A \|f\|_{T^{1,2}(\gamma)}.$
Thus, the first assertion is proved.

\end{itemize}

\noindent {\bf Proof of (ii)} We calculate the $L^p(\gamma)$ norm using duality. Let $f\in T^{p,2}(\gamma)$ and $g\in L^{p'}(\gamma)$ with $p'$ the conjugate exponent to $p.$ We proceed as in case $L^2$:
\begin{align*}
&\left|\int_{\mathbb{R}^{n}}  \pi_\phi f(x) g(x)\, d\gamma(x)\right|\\&=\left|\int_{\mathbb{R}^n}\int_0^1 \int_{\{y\in \mathbb{R}^n: t<m(y)\}}f(y,t) \, g\ast \widetilde{\phi}_t(x-y)d\gamma(y)\frac{dt}{t}\,  g(x)\, dx\right|\\ & \le \int_{\mathbb{R}^{n}}S_{2} f (x)\, S_{2}G(g)(x)\, d\gamma(x)\\ &\le \|S_{2}f\|_{L^p(\gamma)}\, \|S_{2}G(g)\|_{L^{p'}(\gamma)}=\|f\|_{T^{p,2}(\gamma)}\, \|S_{2}G(g)\|_{L^{p'}(\gamma)}.
\end{align*}  
In order to prove that $$\|S_{2}G(g)\|_{L^{p'}(\gamma)}\le C \|g\|_{L^{p'}(\gamma)},$$ we proceed  by localizing the $L^{p'}(\gamma)$ norm with the family of admissible balls $\{B_j\}$ and apply the standard theory of vector-valued singular integral in Harmonic Analysis context (see \cite{stein}), to obtain
\begin{align*}
\int_{\mathbb{R}^n} |S_{2} G(g)(x)|^{p'}d\gamma(x)&=\int_{\mathbb{R}^n}\left(\iint_{\Gamma_\gamma^D(x)}\frac{|g\ast \widetilde{\phi}_t(y)|^2}{\gamma(B(y,t))}d\gamma(y)\frac{dt}{t}\right)^{\frac{p'}{2}}d\gamma(x)\\ &\lesssim  \sum_{j}\int_{B_j} \left( \iint_{\Gamma_\gamma^D(x) }|g\ast \widetilde{\phi}_t (y)|^2 dy \frac{dt}{t^{n+1}}\right)^{\frac{p'}{2}}d\gamma(x)
\\ & \lesssim \sum_{j} e^{-|c_{B_j}|^2} \int_{B_j}\left(\iint_{\Gamma(x)}|\mathcal{X}_{B_j^{**}} g\ast \widetilde{\phi}_t (y)|^2 dy \frac{dt}{t^{n+1}}\right)^{\frac{p'}{2}}dx\\ &\lesssim \sum_{j} e^{-|c_{B_j}|^2} \int_{B_j^{**}}|g(x)|^{p'}dx\lesssim 
\|g\|_{L^{p'}(\gamma)}^{p'},
\end{align*}
where $\Gamma(x)=\{(y,t)\in \mathbb{R}^{n+1}_+: |y-x|<t\}$ is the classical cone in the upper half hyperplane and we have used the bounded overlap of the family $\{B_j^{**}\},$ to reconstruct the $L^{p'}(\gamma)$ norm of $g.$

This ends the proof of the theorem's second assertion.

\noindent {\bf Proof of (iii)}.
For the third, we use the fact that the dual of $H^1(\gamma)$ is the Gaussian space $BMO(\gamma),$ see \cite{MM}. Then for $f\in T^{\infty, 2}(\gamma)$ it will suffice to see that $\pi_\phi(f)$ can be paired with $H^1(\gamma)$, that is, it suffices to show that for $g\in H^1(\gamma)$ the following inequality holds.
\begin{equation} 
\left|\int_{{\mathbb R}^n} \pi_\phi (f)(x)\, g(x)\, d\gamma(x)\right|\le
C  \|g\|_{H^1(\gamma)}. 
\end{equation}

Thus, let $g\in H^1(\gamma)$. Let us estimate $\int_{{\mathbb R}^{n}} \pi_\phi(f)(x)\, g(x)\, d\gamma(x)$ arguing as we have done above.
\begin{align*}\label{bmo}
\int_{{\mathbb R}^n} \pi_\phi (f)(x)\, g(x)\, d\gamma(x)&=\iint_{D} f(y,t)\ g\ast \widetilde{\phi}_t(y)\, d\gamma(y)\frac{dt}{t}\nonumber \\ &= \iint_{\mathbb{R}^{n+1}_+} f(y,t)\, G(g)(y,t)\, d\gamma(y)\frac{dt}{t},
\end{align*}
being $G(g)(y,t)=\mathcal{X}_{D}(y,t)\, \widetilde{\phi}_t\ast g (y).$ From the inequality (\ref{dual1inftygral}) we obtain
\begin{align*}
\left|\int_{\mathbb{R}^n}\pi_\phi f(x)\, g(x)\, d\gamma(x)\right|&\le C \int_{\mathbb{R}^n} C_2 f (x)\, S_2 G(g) (x)\, d\gamma(x)\\ &\le C \|C_2 f\|_{L^\infty(\gamma)}\, \|S_2 G(g)\|_{L^1(\gamma)} \\ &= C \|f\|_{T^{\infty,2}(\gamma)}\, \|S_2 G(g)\|_{L^{1}(\gamma)}.
\end{align*}
It remains to prove that
\begin{equation}\label{continuidadH1}\|S_2 G(g)\|_{L^{1}(\gamma)}\le K \, \|g\|_{H^1(\gamma)}.\end{equation} This boundedness follows once it is shown that 
\begin{enumerate}
\item There exists a constant $K>0,$ such that $\|S_2G(a)\|_{L^1(\gamma)}\le K,$ for every $(1,\infty)$-atom $a$ in $H^1(\gamma).$ \label{boulder1}

\item 
$S_2G$ satisfies a weak-type $(1,1)$ inequality with respect to $\gamma$.\label{boulder2}
\end{enumerate}

Assume that \ref{boulder1} and \ref{boulder2} have been proven. Take
$g\in H^1(\gamma)$, then $g=\sum_{j=1}^\infty\lambda_j\, a_j$ with $a_j$ a $(1,\infty)$-atom and $\sum_{j=1}^\infty|\lambda_j|<\infty,$ see \cite{MMS}. It is known that the convergence of the series to $g$ is in $L^1(\gamma),$ then the sequence $\{S_2G (\sum_{j=1}^k \lambda_j a_j)\}_k$ converges to $S_2G(g)$ in weak-$L^1(\gamma).$ Hence, there exists an increasing function $l:{\mathbb N}\to {\mathbb N}$ such that $$S_2G(g)(x)=\lim_{k\to \infty} S_2G\left(\sum_{j=1}^{l(k)}\lambda_j \, a_j\right)(x), \ \ \text{a.e.}\ x\in \mathbb{R}^n.$$ 
Due to the sublinearity of $S_2G,$ we have $S_2G(g)(x)\le \sum_{j=1}^\infty |\lambda_j|\, S_2G(a_j)(x)$ a.e. $x\in \mathbb{R}^n.$ Hence, $$\|S_2G(g)\|_{L^1(\gamma)}\le \sum_{j=1}^\infty |\lambda_j|\, \|S_2G(a_j)\|_{L^1(\gamma)}\le K \sum_{j=1}^\infty |\lambda_j| .$$ 
For every atomic decomposition in which $g$ is written, so inequality (\ref{continuidadH1}) follows.

\

\noindent{\bf Proof of \ref{boulder1}:}
First a remark
\begin{remark}\label{soporteatomico}
Let $a$ be a $(1,\infty)$-atom, then we have $\text{supp} (S_2G(a))\subseteq \bar{B}(c_B, 9 m(c_B)),$ being $B=B(c_B,r_B)$ the admissible ball containing $\text{supp}(a).$
\end{remark}
In fact, for $x\notin \bar{B}(c_B, 9 m(c_B))$ such that $S_2G(a)(x)<\infty,$ we will see that for every $(y,t)\in D$ such that $\widetilde{\phi}_t\ast a (y)\ne 0,$ 
then $(y,t)\notin \Gamma_\gamma (x),$ that is, $|x-y|\ge t.$ Indeed, from the assumption on $y,$ the ball $B(y,t)$ must intersect $B.$ Then, take $z\in B$ such that $|y-z|<t.$ From this fact and $(y,t)\in D,$ we obtain, taking into account lemma \ref{comparacion}, $m(z)\le 2 m(c_B),$ $m(y)\le 2 m(z)\le 4 m(c_B),$ and $t<m(y)\le 4 m(c_B).$ Thus,
$$|x-y|\ge |x-c_B|-|y-c_B|> 9 m(c_B)-(r_B+|y-z|)>4 m (c_B)> t.$$ Therefore, $S_2 G(a)(x)=0,$ a.e. $x\notin \bar{B}(c_B, 9 m(c_B)).$  Hence, $\text{supp}(S_2G(a))\subseteq \bar{B}(c_B,9 m(c_B)),$ and the remark \ref{soporteatomico} follows.   

Now, let us prove that there exists a constant $C>0,$ such that $\|S_2 G(a)\|_{L^1(\gamma)}\le C.$ In fact,
\begin{align*}\int_{\mathbb{R}^n}S_2G(a)(x)\, d\gamma(x)&=\int_{2B} S_2G(a)(x)\, d\gamma(x)+ \int_{B(c_B, 9 m(c_B))\setminus 2B} S_2G(a)(x)\, d\gamma(x)\\
&=(I)+(II).\end{align*}
For estimating $(I)$ we use Schwarz's inequality followed by the boundedness of the operator $S_2G$ on $L^{2}(\gamma)$ (see item (i) of this theorem) and the fact that $\|a\|_{L^2(\gamma)}\le \gamma(B)^{-1/2},$ and $\gamma(2B)\le C \gamma(B).$ Let us estimate the term $(II).$  

Let us consider the area function 
 $$S_2G(a)(x)=\left(\iint_{\Gamma_\gamma^D (x)}|a\ast \widetilde{\phi}_t(y)|^2\, \frac{d\gamma(y)}{\gamma(B(y,t))}\frac{dt}{t}\right)^{1/2}.$$ Then, $S_2G(a)(x)\lesssim \left(\iint_{\Gamma(x)\cap D}|a\ast\widetilde{\phi}_t(y)|^2\frac{dy}{t^n}\frac{dt}{t}\right)^{1/2},$ being $\Gamma (x)=\{(y,t)\in \mathbb{R}^{n+1}_+: |y-x|<t\}.$

 By defining $\Gamma:=\Gamma (0),$ we can write the last area function as a vector-valued convolution operator, i.e.,
$$S_2G(a)(x)\lesssim\left|a\ast \widetilde{\phi}_t (x-y) \right|_{L^2(\Gamma^D, \frac{dy}{t^n}\frac{dt}{t})},$$ with $\Gamma^D=\Gamma\cap D.$ Let us estimate $a\ast \widetilde{\phi}_t(x-y).$ We have
$$a\ast \widetilde{\phi}_t(x-y)=\int (\widetilde{\phi}_t(x-y-z)-\widetilde{\phi}_t(x-y-c_B))\, a(z)\, dz++ \widetilde{\phi}_t(x-y-c_B)\int a(z)\, dz.$$ 
We may assume $t>\text{dist}(x-y, B)$ so that $t>\frac{|x-y-c_B|}{2},$ since otherwise $a\ast \widetilde{\phi}_t(x-y)$ will be zero. So, the set $\{(y,t)\in \Gamma^D: t>\frac{|x-y-c_B|}{2}\}\subseteq \{(y,t)\in \Gamma^D: t>\frac{|x-c_B|}{3}\}=: W.$

Thus,
\begin{align*}
&|a\ast \widetilde{\phi}_t(x-y)|_{L^2(\Gamma^D, \frac{dy}{t^n}\frac{dt}{t})} \le |a\ast \widetilde{\phi}_t(x-y)|_{L^2(W, \frac{dy}{t^n}\frac{dt}{t})}\\ & \le \int |\widetilde{\phi}_t(x-y-c_B-(z-c_B))-\widetilde{\phi}_t(x-y-c_B)|_{L^2(W, \frac{dy}{t^n}\frac{dt}{t})}\ |a(z)|\, dz\\ &\ \ \ + |\widetilde{\phi}_t(x-y-c_B)|_{L^2(W, \frac{dy}{t^n}\frac{dt}{t})}\ \left|\int a(z)\, dz\right|.
\end{align*}
We will prove that $$|\widetilde{\phi}_t(x-y-c_B-(z-c_B))-\widetilde{\phi}_t(x-y-c_B)|_{L^2(W, \frac{dy}{t^n}\frac{dt}{t})}\lesssim \frac{|z-c_B|}{|x-c_B|^{n+1}},$$ and $$|\widetilde{\phi}_t(x-y-c_B)|_{L^2(W, \frac{dy}{t^n}\frac{dt}{t})}\lesssim \frac{1}{|x-c_B|^n}.$$
Indeed,
\begin{align*}
|\widetilde{\phi}_t&(x-y-c_B-(z-c_B))-\widetilde{\phi}_t(x-y-c_B)|^2_{L^2(W, \frac{dy}{t^n}\frac{dt}{t})}\\ &=\int_{t>\frac{|x-c_B|}{3}}\int_{|y|<t}|\widetilde{\phi}_t(x-c_B-(z-c_B)-y)-\widetilde{\phi}_t(x-c_B-y)|^2\, \frac{dy}{t^n}\frac{dt}{t}\\ &=\int_{|w|<1}\int_{t>\frac{|x-c_B|}{3}} \left|\frac{1}{t^n}\left(\widetilde{\phi}\left(\frac{x-c_B-(z-c_B)}{t}-w\right)-\widetilde{\phi}\left(\frac{x-c_B}{t}-w\right)\right)\right|^2\, \frac{dt}{t}dw
\end{align*}
Now, we apply the mean value theorem to the function $\widetilde{\phi}$ and use the assumption $|\nabla \phi(x)|\le M$ for all $x\in \mathbb{R}^n$ to obtain that the last term of the above inequality is bounded by
$$C \int_{t>\frac{|x-c_B|}{3}}\frac{dt}{t^{2(n+1)+1}}|z-c_B|^2\lesssim \left(\frac{|z-c_B|}{|x-c_B|^{n+1}}\right)^2.$$
On the other hand,
\begin{align*}|\widetilde{\phi}_t(x-y-c_B)|^2_{L^2(W, \frac{dy}{t^n}\frac{dt}{t})}&=\int_{t>\frac{|x-c_B|}{3}}\int_{|y|<t}\left|\frac{1}{t^n}\widetilde{\phi}\left(\frac{x-c_B}{t}-\frac{y}{t}\right)\right|^2\, \frac{dy}{t^n}\frac{dt}{t}\\&=\int_{t>\frac{|x-c_B|}{3}}\int_{|w|<1}\frac{1}{t^{2n}}\left|\widetilde{\phi}\left(\frac{x-c_B}{t}-w\right)\right|^2\, dw\frac{dt}{t}\\ &\lesssim \int_{t>\frac{|x-c_B|}{3}}\frac{dt}{t^{2n+1}}\lesssim \frac{1}{|x-c_B|^{2n}}. \end{align*}
Also, taking into account that $\int a(z)\, e^{-|z|^2}\, dz=0,$\begin{align*}
\int a(z)\, dz&= e^{|c_B|^2}\int a(z)(e^{-|c_B|^2}-e^{-|z|^2})\, dz\\ & = e^{|c_B|^2}\int a(z) (e^{(z-c_B)\cdot(z+c_B)}-1)d\gamma(z),
\end{align*}
so, $$\left|\int a(z)\, dz\right|\lesssim e^{|c_B|^2}\int_B |z-c_B|(1+|c_B|)|a(z)|d\gamma(z)\lesssim r_B(1+|c_B|) e^{|c_B|^2}.$$
Bringing all things together, we have
\begin{align*}|a\ast \widetilde{\phi}_t(x-y)|_{L^2(\Gamma^D, \frac{dy}{t^n}\frac{dt}{t})}&\lesssim \frac{1}{|x-c_B|^{n+1}}\int_B |z-c_B|\, |a(z)|\, dz\\
&\;\;\;+ \frac{1}{|x-c_B|^n}r_B(1+|c_B|)e^{|c_B|^2}\\ &\lesssim \frac{r_B\, e^{|c_B|^2}}{|x-c_B|^{n+1}}+\frac{r_B(1+|c_B|)\, e^{|c_B|^2}}{|x-c_B|^n}.\end{align*}
Hence, 
$$(II)=\int_{B(c_B, 9 m(c_B))\setminus 2B}S_2G(a)(x)\, d\gamma(x)\lesssim 1+r_B(1+|c_B|)\log\left(\frac{9m(c_B)}{r_B}\right)\le C.$$

\

\noindent{\bf Proof of \ref{boulder2}:}
Now, we are going to prove that $S_2G$ is weak-type $(1,1)$ continuous with respect to $\gamma,$ i.e.,  there exists $C>0$ such that for every $f\in L^1(\gamma)$ and $\lambda>0,$ $\gamma\{x\in \mathbb{R}^n: S_2G(f)(x)>\lambda\}\le C\|f\|_{L^1(\gamma)}/\lambda.$ We proceed as we did before in the calculation of the $L^{p'}(\gamma)$ norm of $S_2G.$ Take $f\in L^1(\gamma),$ then we have for all $x\in B_j,$ $S_2G(f)(x)=S_2G(\mathcal{X}_{B_j^{**}}f)(x)$ and $S_2G(f)(x)\lesssim |f\ast\widetilde{\phi}_t(x-y)|_{L^2(\Gamma, \frac{dy}{t^n}\frac{dt}{t})}.$ Thus
\begin{align*}
\gamma\{x\in\mathbb{R}^n: & S_2G(f)(x)>\lambda\} \le \sum_j \gamma\{x\in B_j: S_2G(\mathcal{X}_{B^{**}_j}f)(x)>\lambda\}
\\ & \le \sum_j \gamma\{x\in B_j: |\mathcal{X}_{B_j^{**}}f\ast \widetilde{\phi}_t (x-y)|_{L^2(\Gamma, \frac{dy}{t^n}\frac{dt}{t})}>\lambda/C\}
\\ & \lesssim \sum_j e^{-|c_{B_j}|^2} |\{x\in \mathbb{R}^n: |\mathcal{X}_{B_j^{**}}f\ast \widetilde{\phi}_t (x-y)|_{L^2(\Gamma, \frac{dy}{t^n}\frac{dt}{t})}>\lambda/C\}|.
\end{align*}
According to \cite{stein}, the Area function $|f\ast \widetilde{\phi}_t(x-y)|_{L^2(\Gamma, \frac{dy}{t^n}\frac{dt}{t})}$ is weak-type $(1,1)$ continuous with respect to Lebesgue measure. So,
\begin{align*}
\gamma\{x\in\mathbb{R}^n: S_2G(f)(x)>\lambda\}&\lesssim \sum_j \frac{e^{-|c_{B_j}|^2}}{\lambda}\, \int_{B^{**}_j}|f(y)|\, dy\\ & \lesssim \frac{1}{\lambda}\int_{\mathbb{R}^n}|f(y)|\, d\gamma(y).
\end{align*}
This ends the proof of the theorem's third assertion.
\section{Proofs of Section \ref{carleson}}
\subsection{Proof of Theorem \ref{car1}}\label{car1proof}
{{ Let $f\in T^{1,\infty}_{\alpha, \beta}(\gamma)$ be given. Then \begin{equation}\label{acotacionmu}
    \iint_{\mathbb{R}^{n+1}_+}|f(y,t)|\, d|\mu| (y,t)\lesssim_{n,\delta} \|\mu\|_C\ \|f\|_{T_{\alpha,\beta}^{1,\infty}(\gamma)}
\end{equation} 
Indeed, according to the proof of Theorem \ref{teo: descomposicion atomica} for the case $q=\infty,$ there exists a finite open covering of $\mathbb{R}^n$, $\{W_j\}_{j=1}^N$ and a continuous partition of unity $\{\phi_j\}_{j=1}^N$ with $\phi_j\in \mathscr{C}_c(\mathbb{R}^n)$ and $\text{supp}(\phi_j)\subset W_j$ such that $$f(y,t)=\sum_{j=1}^N f(y,t)\, \phi_j(y)=:\sum_{j=1}^N f_j(y,t)$$ with $f_j\in \mathscr{C}(\mathbb{R}^{n+1}_+).$ Thus,
$$\iint_{\mathbb{R}^{n+1}_+}|f(y,t)|\, d|\mu|(y,t)\le \sum_{j=1}^N \iint_{\mathbb{R}^{n+1}_+}|f_j(y,t)|\, d|\mu|(y,t).$$
Now, let us compare the open sets $\{(y,t)\in \mathbb{R}^{n+1}_+: |f_j(y,t)|>\kappa \}$ and $\Omega_\kappa^j:=\{x\in \mathbb{R}^n: S_{\infty, \alpha, \beta}(f_j)(x)>\kappa\}.$ For every $(y,t)$ such that $|f_j(y,t)|>\kappa,$ then $B(y,\alpha t\wedge m_\beta(y))\subset \Omega_\kappa^j,$ hence $\text{dist}(y,(\Omega_\kappa^j)^c)\ge \alpha t\wedge m_\beta (y),$ i.e., $(y,t)\in T^{\alpha, \beta}_\gamma(\Omega_\kappa^j).$ This implies $\{(y,t)\in \mathbb{R}^{n+1}_+: |f_j(y,t)|>\kappa\}\subseteq T_\gamma^{\alpha, \beta}(\Omega_\kappa^j).$ It was proved in Theorem \ref{teo: descomposicion atomica}  that $\Omega_\kappa^j$ is contained in $W_j+\mathscr{C}_{\beta(1+\beta)}$ which is an open Whitney $\lambda$-set. Then, from the Whitney lemma applied to $\Omega_\kappa^j$, see \cite{Stein-Singular integrals and differentiability properties}, we obtain a disjoint sequence of dyadic cubes $\{Q_i^j\}_{i\in {\mathbb N}}$ such that $\Omega_\kappa^j=\bigcup_{i\in {\mathbb N}}Q_i^j$ and $\text{diam}(Q_i^j)=:d_i^j\le \text{dist}(Q_i^j,(\Omega_\kappa^j)^c)\le 4 d_i^j.$ Let us call $B_i^j$ an open ball centered at $c_{Q_i^j},$ the center of $Q_i^j,$ and radius $C>0$ times $d_i^j,$ where $C$ will be chosen large enough so that $T^{\alpha, \beta}_\gamma(\Omega_\kappa^j)\subseteq \bigcup_{i\in {\mathbb N}}T_\gamma^{\alpha, \beta}(B_i^j).$ In fact, take $(y,t)\in T^{\alpha, \beta}_\gamma(\Omega_\kappa^j),$ then 
$y\in \Omega_\kappa^j$ and there exists $i\in {\mathbb N}$ such that $y\in Q_i^j.$ Thus $\alpha t\wedge m_\beta(y)\le \text{dist}(y,(\Omega_\kappa^j)^c)\le \text{dist}(Q_i^j, (\Omega_\kappa^j)^c)+d_i^j\le 5 d_i^j.$ On the other hand,  $\text{dist}(y, (B_i^j)^c)\ge (C-1)d_i^j.$ So, taking $C\ge 6,$ we obtain $\text{dist}(y, (B_i^j)^c)\ge \alpha t\wedge m_\beta (y),$ i.e., $(y,t)\in T_\gamma^{\alpha, \beta}(B_i^j).$ Observe that $B_i^j\in \mathscr{B}_{C 2^{2p+2}\sqrt{n}}: =\mathscr{B}_{\delta},$ for all $i,j.$  
Now, if $\mu$ is a Carleson measure, then 
\begin{align*}|\mu|&\{(y,t)\in \mathbb{R}^{n+1}_+: |f_j(y,t)|> \kappa\} \le |\mu|\left(\bigcup_{i \in {\mathbb N}}T_\gamma^{\alpha, \beta}(B_i^j)\right)\\ & \le \sum_{i\in {\mathbb N}}|\mu|(T_\gamma^{\alpha,\beta}(B_i^j)) \le \|\mu\|_C \sum_{i\in {\mathbb N}}\gamma(B_i^j)\\ &\lesssim_{n,\delta} \|\mu\|_C
\sum_{i\in {\mathbb N}}\gamma(Q_i^j) = \|\mu\|_C  \gamma(\Omega_\kappa^j). 
\end{align*}
So, 
\begin{align*}\iint_{\mathbb{R}^{n+1}_+}&|f(y,t)|\, d|\mu|(y,t)\le \sum_{j=1}^N\int_0^\infty |\mu|\{(y,t)\in \mathbb{R}^{n+1}_+: |f_j(y,t)|>\kappa\}\, d\kappa\\ &\lesssim_{n, \delta} \|\mu\|_C \sum_{j=1}^N \int_0^\infty 
\gamma\{x\in \mathbb{R}^n: S_{\infty,\alpha,\beta}(f_j)(x)>\kappa\}\, d\kappa\\ &\lesssim_{n,\delta, N} \|\mu\|_C\ \|f\|_{T_{\alpha,\beta}^{1,\infty}(\gamma)}.
\end{align*}}}
Thus, each Carleson measure induces a continuous linear functional on $T^{1,\infty}_{\alpha, \beta}(\gamma).$

Now, let us prove the converse. Given $l$ a continuous linear functional on $T^{1,\infty}_{\alpha, \beta}(\gamma)$. Then $l$ restricted to $\mathscr{C}_0(\mathbb{R}^{n+1}_+)$ is a continuous linear functional on $\mathscr{C}_0(\mathbb{R}^{n+1}_+)$ with the sup norm in $\mathbb{R}^{n+1}_+,$ since for $f\in \mathscr{C}_0(\mathbb{R}^{n+1}_+),$ $\|f\|_{T^{\alpha, \beta}_{1,\infty}(\gamma)}\le C_n \|f\|_{L^\infty(\mathbb{R}^{n+1}_+)}.$
From the Riesz representation theorem \cite[Theorem 6.19]{ru}, \cite{ar} there exists a unique finite signed Borel measure $\mu$ such that
\begin{equation}\label{representacionRiesz}
l(f)=\iint_{\mathbb{R}^{n+1}_+} f(y,t)\, d\mu(y,t), \ \ \ \ \hbox{ for any } f\in \mathscr{C}_0(\mathbb{R}^{n+1}_+),
\end{equation}
with $\|l\|=|\mu|(\mathbb{R}^{n+1}_+).$ 
Let us prove that $\mu$ is a Carleson measure. From the Riesz representation theorem, it is known that $\mu=\mu_+-\mu_-$ where $\mu_+$ is the Radon measure associated with the positive linear functional $l_1:=l\vee 0$, and $\mu_-$ is the one associated with the positive functional $l_2:=(l\vee 0)-l$ and such that $l$ is the difference between these two positive functionals.

Now, given $\{K_m\}_{m\in {\mathbb N}}$ an increasing sequence of compact sets that exhausts $\mathbb{R}^{n+1}_+,$ take $B\in \mathscr{B}_\delta,$ 
and $\varphi\in \mathscr{C}_0(\mathbb{R}^{n+1}_+)$ with $0\le \varphi\le 1,$ $\varphi=1$ in $T^{\alpha,\beta}_\gamma(B)\cap K_m$ and supported in a small neighborhood of $T^{\alpha,\beta}_\gamma(B)\cap K_m,$ contained in $T^{\alpha,\beta}_{\gamma}(2B),$ then we have 
\begin{align}\label{acotaciontienda}
\iint_{T^{\alpha, \beta}_\gamma(B)\cap K_m}\, d|\mu|(y,t)&\le \iint_{\mathbb{R}^{n+1}_+} \varphi (y,t)\, d|\mu|(y,t)\\ &=l_1(\varphi)+l_2(\varphi)\nonumber\\ &= |l(\varphi)| \le \|l\|\ \|\varphi\|_{T^{1,\infty}_{\alpha, \beta}(\gamma)}.\nonumber
\end{align}
It remains to prove that $\|\varphi\|_{T^{1,\infty}_{\alpha, \beta}(\gamma)}\lesssim \gamma(B).$ According with the definition of $\varphi,$ $\text{supp}(S_{\infty, \alpha, \beta}\ \varphi)\subset 2 B.$ Hence, $$\|\varphi\|_{T^{1,\infty}_{\alpha,\beta}(\gamma)} =\int_{\mathbb{R}^{n}}S_{\infty, \alpha, \beta}(\varphi)(x)\, d\gamma(x) \le C_n \|\varphi\|_\infty \gamma(2 B)\lesssim_{n,\delta} \gamma(B).$$ 
Thus, letting $m\to \infty$ in (\ref{acotaciontienda}), we obtain that $|\mu|(T^{\alpha, \beta}_{\gamma}(B))\lesssim_{n,\delta} \|l\|\ \gamma(B),$ for every $B\in \mathscr{B}_\delta.$ 
Notice that the right side of (\ref{representacionRiesz}) is also continuous on $T^{1,\infty}_{\alpha,\beta}(\gamma)$ taking into account inequality (\ref{acotacionmu}), and both sides of (\ref{representacionRiesz}) coincide on $\mathscr{C}_0(\mathbb{R}^{n+1}_+),$ a dense subspace of $T_{\alpha, \beta}^{1,\infty}(\gamma),$ so that formula in  (\ref{representacionRiesz}) is also valid for every $f\in T^{1,\infty}_{\alpha, \beta}(\gamma).$
 
\subsection{Proof of Theorem \ref{car2}}\label{car2proof}

\noindent{\bf Case $p=1,$ $1<q<\infty.$ }
%\textcolor{red}
For $f\in T^{1,q}_{\alpha, \beta}(\gamma)$ and $g\in T^{\infty, q'}_{\alpha,\beta} (\gamma),$ the following inequality holds.
\begin{equation}\label{dual1inftygral}
\iint_{\mathbb{R}^{n+1}_+}|f(y,t)|\, |g(y,t)|\ d\gamma(y)\frac{dt}{t}\le C \int_{\mathbb{R}^n}S_{q,\alpha,\beta} (f)(x)\, C_{q', \alpha,\beta}(g)(x)\, d\gamma(x).
\end{equation}
where $q'$ is the dual exponent to $q,$ and $$C_{q',\alpha,\beta} (g)(x)=\sup_{B: x\in B(c_B,  \alpha r_B\wedge m_\beta(c_B))}\left(\frac{1}{\gamma(B)}\iint_{T^{\alpha,\beta}_\gamma(B)}|g(y,t)|^{q'}\, d\gamma(y)\frac{dt}{t}\right)^{1/q'}.$$}

The idea of the proof is an adaptation to the Gaussian context of the proof in \cite[p. 313-314]{cms}.
{We define the truncated Gaussian cone $$\Gamma_\gamma^{\alpha, \beta, h}(x)=\{(y,t)\in \Gamma^{\alpha,\beta}_\gamma(x): t<h\}$$ and set
$$S_{q,\alpha,\beta,h} (f)(x)=\left(\iint_{\Gamma^{\alpha, \beta, h}_\gamma(x)}\frac{|f(y,t)|^q}{\gamma(B(y,\alpha t\wedge m_\beta(y)))}\, d\gamma (y)\frac{dt}{t}\right)^{1/q}.$$
It is immediate to see that $S_{q,\alpha, \beta,h}(f)(x)$ increases monotonically to   $S_{q,\alpha, \beta }\, f(x).$ Now, for every measurable function $g$ in $\mathbb{R}^{n+1}_+$, we define the {\bf stopping-time} $h(x)$ as
$$h(x)=\sup\{h>0: S_{q',\alpha, \beta,h}\, g(x)\le M C_{q',\alpha, \beta}\, g (x)\},$$
where the constant $M$ will be chosen later and will depend only on dimension $n$. In order to prove the inequality (\ref{dual1inftygral}) there is a key observation that involves the stopping time $h(x)$ that says
\begin{remark}\label{porcion}
There exists a constant $\lambda=\lambda_M\in (0,1)$ such that for every open ball $B=B(c_B,r_B)$ in $\mathbb{R}^n,$ we have \begin{equation}\label{puntosdensidad}
\gamma(\{x\in B(c_B, \alpha r_B\wedge m_\beta(c_B)): h(x)\ge r_B\})\ge \lambda\ \gamma (B(c_B, \alpha r_B\wedge m_\beta(c_B))).
\end{equation}
\end{remark}
Using this observation, Fubini's theorem followed by H\"older's inequality, we have
\begin{align*}
\iint_{\mathbb{R}^{n+1}_+}|f(y,t)|\, &|g(y,t)|\ d\gamma(y)\frac{dt}{t} \le \frac{1}{\lambda}\int_{\mathbb{R}^n}\iint_{\mathbb{R}^{n+1}_+}\frac{|f(y,t)|\, |g(y,t)|}{\gamma(B(y, \alpha t\wedge m_\beta (y)))}\\ & \qquad \int_{\mathbb{R}^n}\mathcal{X}_{\Gamma_\gamma^{\alpha, \beta, h(x)}(x)}(y,t)\, d\gamma(x) \ d\gamma(y)\frac{dt}{t}\\ &=\frac{1}{\lambda}\int_{\mathbb{R}^n}\iint_{\Gamma_\gamma^{\alpha, \beta, h(x)}(x)}\frac{|f(y,t)|\, |g(y,t)|}{\gamma(B(y,\alpha t\wedge m_\beta (y)))}\, d\gamma(y)\frac{dt}{t}\,  d\gamma(x)\\ &\le \frac{1}{\lambda}\int_{\mathbb{R}^n} S_{q,\alpha, \beta, h(x)} f (x)\, S_{q',\alpha, \beta,  h(x)} g (x)\, d\gamma(x)\\ & \le \frac{M}{\lambda}\int_{\mathbb{R}^n} S_{q,\alpha,\beta}\, f(x)\, C_{q',\alpha,\beta}\, g (x)\, d\gamma(x).
\end{align*}
This ends the proof of inequality (\ref{dual1inftygral}).
Now, we turn to the proof of inequality (\ref{puntosdensidad}).
Let us call $\widetilde{B}(c_B, \alpha r_B\wedge m_\beta(c_B)):=\{x\in B(c_B,\alpha r_B\wedge m_\beta (c_B)): h(x)\ge r_B\}.$ Note that $$x\in \widetilde{B}(c_B,\alpha r_B\wedge m_\beta(c_B)) \ \ \text{if\ and\ only\ if}\ \ S_{q', \alpha, \beta,r_B}\, g(x)\le M\, C_{q',\alpha, \beta}\, g (x).$$ Set $\widetilde{B}(c_B, \alpha r_B\wedge m_\beta(c_B))^c:=B(c_B,\alpha r_B\wedge m\beta (c_B))\setminus \widetilde{B}(c_B, \alpha r_B\wedge m\beta (c_B)).$ Take $g\in T^{\infty, q'}_{\alpha,\beta}(\gamma),$ with $g\not \equiv 0,$ then 
\begin{align*}
&\inf_{x\in B(c_B, \alpha r_B\wedge m_\beta(c_B))}  (M C_{q',\alpha,\beta}\, g (x))^{q'}  \gamma (\widetilde{B}(c_B,\alpha r_B\wedge m_\beta (c_B))^c) \\ 
& \le \int_{B(c_B, \alpha r_B\wedge m_\beta (c_B))} (S_{q',\alpha,\beta, r_B} g (x))^{q'}, d\gamma (x)\\ & =\int_{B(c_B, \alpha r_B\wedge m_\beta (c_B))} \iint_{\Gamma_\gamma^{\alpha, \beta, r_B}(x)}  \frac{|g(y,t)|^{q'}}{\gamma (B(y,\alpha t\wedge m_\beta (y)))}\, d\gamma(y)\frac{dt}{t}\, d\gamma(x)\\ & \le \iint_{R^{\alpha, \beta, r_B}_{\gamma}(B(c_B, \alpha r_B\wedge m_{\beta} (c_B)))} |g(y,t)|^{q'} \, d\gamma (y)\frac{dt}{t}\\ &\le \iint_{T_\gamma((2(\beta+1)^2+1) B(c_B, \alpha r_B\wedge m_\beta (c_B)))} |g(y,t)|^{q'}\, d\gamma(y)\frac{dt}{t}\\ & \le \gamma ((2(\beta+1)^2+1) B(c_B, \alpha r_B\wedge m_\beta(c_B)))\inf_{x\in B(c_B, \alpha r_B\wedge m_\beta (c_B))} (C_{q',\alpha, \beta}\, g(x))^{q'},
\end{align*}
namely $R_\gamma^{\alpha, \beta, r_B}(B(c_B, \alpha r_B\wedge m_{\beta} (c_B)))=\bigcup_{x\in B(c_B, \alpha r_B\wedge m_\beta (c_B))}\Gamma_\gamma^{\alpha, \beta, r_B}(x).$

Since $g$ is a nonzero $T^{\infty,q'}_{\alpha,\beta} (\gamma)$-function, then $\inf_{x\in B(c_B, \alpha r_B\wedge m_\beta (c_B))} C_{q',\alpha,\beta}\, g(x)\in (0,\infty),$ and we get that 
\begin{align*}
M^{q'} \gamma(\widetilde{B}(c_B, \alpha r_B\wedge m_\beta (c_B))^c)&\le \gamma((2(\beta+1)^2+1)B(c_B, \alpha r_B\wedge m_\beta (c_B))) \\ & \le K_\beta \gamma (B(c_B, \alpha r_B \wedge m_\beta (c_B))),
\end{align*}
thus $\gamma (\widetilde{B}(c_B, \alpha r_B\wedge m_\beta (c_B))^c)\le \frac{K_\beta}{M^{q'}}\gamma (B(c_B, \alpha r_B\wedge m_\beta (c_B))).$ Hence we pick $M$ large enough so that $K_\beta /M^{q'}\in (0,1),$ and take $\lambda= 1-\frac{K}{M^{q'}}.$ 
It remains to prove that $$R_{\gamma}^{\alpha, \beta, r_B}(B(c_B, \alpha r_B \wedge m_\beta (c_B)))\subseteq T_\gamma ((2(\beta+1)^2+1)B(C_B, \alpha r_B\wedge m_\beta (c_B))).$$ Let $(y,t)\in R_{\gamma}^{\alpha, \beta, r_B}(B(c_B, \alpha r_B\wedge m_\beta (c_B)))$ be given. Then there exists $x\in B(c_B, \alpha r_B \wedge m_\beta (c_B))$ such that $|y-x|<\alpha t\wedge m_\beta (y)$ and $t<r_B.$ Therefore \begin{align*}|y-c_B|\le |y-x|+|x-c_B|& <\alpha t\wedge m_\beta (y)+ \alpha r_B\wedge m_\beta (c_B)\\ &=2(\alpha t\wedge m_\beta(y))+\alpha r_B\wedge m_\beta(c_B)-(\alpha t\wedge m_\beta (y))\\ &\le 2(\alpha r_B\wedge m_\beta (y))+ \alpha r_B\wedge m_\beta (c_B)- (\alpha t\wedge m_\beta (y)).
\end{align*}
However, taking into account the relationship between $x$ and $y,$ and $x$ and $c_B,$ together with Lemma \ref{comparacion} we find that $m_\beta (y)\le (\beta+1)^2 m_\beta (c_B).$ Thus $|y-c_B|< (2(\beta+1)^2+1) (\alpha r_B\wedge m_\beta (c_B))-(\alpha t\wedge m_\beta (y))$, that is, $(y,t)\in T_\gamma^{\alpha, \beta} ((2(\beta+1)^2+1)B(c_B, \alpha r_B\wedge m_\beta (c_B))).$ }

Now, we will prove that the dual of $T^{1,q}_{\alpha,\beta}(\gamma)$ is isomorphic to $T^{\infty, q'}_{\alpha, \beta}(\gamma)$ with equivalent norms. An immediate consequence of the inequality (\ref{dual1inftygral}) is that every $g\in T^{\infty, q'}_{\alpha, \beta}(\gamma)$ induces a bounded linear functional on $T^{1,q}_{\alpha, \beta}(\gamma)$ by $f\to \iint_{\mathbb{R}^{n+1}_+}f(y,t)\, g(y,t)\ d\gamma(y)\frac{dt}{t}$. Let us prove the converse. Let $l$ be a bounded linear functional on $T^{1,q}_{\alpha, \beta}(\gamma).$ Let $K$ be a compact subset of $\mathbb{R}^{n+1}_+$, and $f$  a measurable function supported in $K,$ with $f\in L^q(K, d\gamma\frac{dt}{t}),$ then according to the proof of Lemma \ref{density}, $f\in T^{1,q}_{\alpha, \beta}(\gamma),$ with $\|f\|_{T^{1,q}_{\alpha, \beta}(\gamma)}\le C_K \|f\|_{L^q(K, d\gamma\frac{dt}{t})}.$ Thus $l$ induces a bounded linear functional on $L^q(K,d\gamma\frac{dt}{t}),$ and is thus representable by a function $g_K\in L^{q'}(K, d\gamma\frac{dt}{t}).$ Taking $\{K_m\}_{m\in {\mathbb N}}$ an increasing sequence of compact sets which exhausts $\mathbb{R}^{n+1}_+,$ we can construct a measurable function $g$ on $\mathbb{R}^{n+1}_+$ such that $g/K_m = g_{K_m},$ and so that $l(f)=\iint_{\mathbb{R}^{n+1}_+}f(y,t)\, g(y,t)\ d\gamma(y)\frac{dt}{t},$ whenever $f\in T^{1,q}_{\alpha, \beta}(\gamma),$ and $f$ has compact support $K.$ From the proof of Lemma \ref{density}, 
$f\in L^{q}(K,d\gamma\frac{dt}{t})$. Let us prove that $g\in T^{\infty, q'}_{\alpha, \beta}(\gamma).$ In fact, for any open ball $B,$ we calculate $\|g\mathcal{X}_{T^{\alpha, \beta}_\gamma (B)}\|_{L^{q'}(\mathbb{R}^{n+1}_+,d\gamma\frac{dt}{t})}$ through duality, that is, for $\phi\in L^{q}(\mathbb{R}^{n+1}_+,d\gamma\frac{dt}{t})$ with $\|\phi\|_{L^q(\mathbb{R}^{n+1}_+, d\gamma \frac{dt}{t})}\le 1,$ then  \begin{align*}
\left| \iint_{K_m} \mathcal{X}_{T^{\alpha, \beta}_\gamma(B)}(y,t)\phi(y,t)\, g(y,t)\ d\gamma(y)\frac{dt}{t}\right|&= |l(\mathcal{X}_{T^{\alpha,\beta}_\gamma (B)\cap K_m} \phi)|\\ &\le \|l\|\ \|\mathcal{X}_{T^{\alpha, \beta}_\gamma (B)} \phi\|_{T^{1,q}_{\alpha, \beta}(\gamma)}. 
\end{align*}
Now, 
\begin{align*}\|\mathcal{X}_{T^{\alpha, \beta}_\gamma(B)}\phi\|_{T^{1,q}_{\alpha, \beta}(\gamma)}&=\int_{B}S_{q,\alpha,\beta}(\mathcal{X}_{T^{\alpha, \beta}_\gamma (B)}\phi)(x)\, d\gamma(x) \\ & \le \gamma(B)^{1/q'}\left(\int_{\mathbb{R}^n} (S_{q,\alpha, \beta}(\mathcal{X}_{T^{\alpha, \beta}_\gamma(B)} \phi))(x))^q\, d\gamma(x)\right)^{1/q}\\ & = \gamma(B)^{1/q'} \|\phi\|_{L^{q}(\mathbb{R}^{n+1}_+, d\gamma\frac{dt}{t})}\le \gamma(B)^{1/q'}.
\end{align*}
Thus, \begin{align*}
\left|\iint_{T^{\alpha, \beta}_\gamma(B)}g \phi\ d\gamma(y)\frac{dt}{t}\right|&=\limsup_{m\to \infty}\left|\iint_{K_m} g \mathcal{X}_{T^{\alpha, \beta}_\gamma(B)} \phi\ d\gamma(y)\frac{dt}{t}\right|\\ & \le \|l\|\gamma(B)^{1/q'}.
\end{align*}
Therefore, $$C_{q', \alpha, \beta}\ g(x)=\sup_{B:\ x\in B(c_B, \alpha r_B\wedge m_\beta (c_B))}\frac{1}{\gamma(B)^{1/q'}}\|g\mathcal{X}_{T^{\alpha,\beta}_\gamma(B)}\|_{L^{q'}(\mathbb{R}^{n+1}_+, d\gamma \frac{dt}{t})}\le \|l\|,$$ and $\|g\|_{T^{\infty,q'}_{\alpha, \beta}(\gamma)}\le \|l\|.$ This representation of $l$ is then extendable to the whole space $T^{1,q}_{\alpha, \beta}(\gamma),$ since the subspace $ L^q_{c}(\mathbb{R}^{n+1}_+, d\gamma\frac{dt}{t})$ is dense in $T^{1,q}_{\alpha, \beta}(\gamma)$ according to the Lemma \ref{density}.
On the other hand, we get $\|l\|\lesssim \|g\|_{T^{\infty,q'}_{\alpha, \beta}(\gamma)}$ from (\ref{dual1inftygral}).

\noindent {\bf Case $1<p,q<\infty.$} Let us analyze the simple case $p=q.$ In this situation, it can be seen immediately that $T^{p,p}_{\alpha, \beta}(\gamma)=L^{p}(\mathbb{R}^n. d\gamma\frac{dt}{t})$ and therefore its dual is isometrically isomorphic to $L^{p'}(\mathbb{R}^{n+1}_+, d\gamma\frac{dt}{t})=T^{p',p'}_{\alpha,\beta}(\gamma).$

For $p\ne q.$ Take $f\in T^{p,q}_{\alpha, \beta}(\gamma)$ and $g\in T^{p',q'}_{\alpha, \beta}(\gamma),$ then
\begin{equation}\label{dualidadpq}
\iint_{\mathbb{R}^{n+1}_+}|f(y,t)|\, |g(y,t)|\, d\gamma(y)\frac{dt}{t}\le \int_{\mathbb{R}^n}S_{q,\alpha, \beta}f(x)\, S_{q',\alpha, \beta}g(x)\, d\gamma(x).
\end{equation}
In fact, applying an average followed by a H\"older's inequality with $q$ and $q'$, we obtain the following.
\begin{align*}
\iint_{\mathbb{R}^{n+1}_+}|f(y,t)|\, |g(y,t)|\,& d\gamma(y)\frac{dt}{t}= \iint_{\mathbb{R}^{n+1}_+}|f(y,t)|\, |g(y,t)|\, \\ &\frac{1}{\gamma(B(y, \alpha t\wedge m_\beta(y))}\int_{\mathbb{R}^n}\mathcal{X}_{B(y, \alpha t\wedge m_\beta (y))}(x)d\gamma(x)\, d\gamma(y)\frac{dt}{t}\\ & =\int_{\mathbb{R}^n}\iint_{\Gamma^{\alpha, \beta}_\gamma(x)}\frac{|f(y,t)|\, |g(y,t)|}{\gamma(B(y, \alpha t \wedge m_\beta (y)))}\, d\gamma(y)\frac{dt}{t}\, d\gamma(x)\\ &\le \int_{\mathbb{R}^n}S_{q, \alpha, \beta}f(x)\, S_{q',\alpha, \beta}g(x)\, d\gamma(x).
\end{align*}
So (\ref{dualidadpq}) shows, again using H\"older's inequality with $p$ and $p'$ that every element $g\in T^{p',q'}_{\alpha, \beta}(\gamma)$ induces a bounded linear functional on $T^{p,q}_{\alpha, \beta}(\gamma).$

Let us prove the converse.  Take $l$ a continuous linear functional on $T^{p,q}_{\alpha,\beta}(\gamma).$ Now, we argue as we have done for the case $p=1,$ $1<q<\infty.$ There exists a measurable function $g,$ that is locally in $L^{q'}(\mathbb{R}^{n+1}_+,d\gamma \frac{dt}{t}),$ so that \begin{equation}\label{formularepresentacion}
l(f)=\iint_{\mathbb{R}^{n+1}_+}f(y,t)\, g(y,t)\ d\gamma(y)\frac{dt}{t},
\end{equation}
whenever $f\in T^{p,q}_{\alpha, \beta}(\gamma)$ and $f$ have compact support in $\mathbb{R}^{n+1}_+.$ 

First, we assume that $1<p<q,$ then $q'<p'.$
Let $K$ be an arbitrary compact subset of $\mathbb{R}^{n+1}_+,$ and set $g_K=g\mathcal{X}_K.$ 
To prove that $g\in T^{p',q'}_{\alpha, \beta}(\gamma),$ it suffices to obtain the following boundedness with constant independent of $K$: \begin{equation}\label{acotacionTdual}
\|S_{q',\alpha,\beta}(g_K)\|_{L^{p'}(\gamma)}\le C \|l\|.
\end{equation}
 Let $s$ be the dual exponent to $\frac{p'}{q'}>1.$ %and set $g_K=g\mathcal{X}_K.$ 
 Observe that $p'=s' q'$ and
 \begin{align*}
\|g_K\|_{T^{p',q'}_{\alpha,\beta}(\gamma)}^{q'}&=\left[\left(\int_{\mathbb{R}^n}(S_{q',\alpha,\beta}(g_K)(x))^{p'} \, d\gamma(x)\right)^{1/{p'}}\right]^{q'}\\ & = \left(\int_{\mathbb{R}^n}(S_{q',\alpha, \beta}(g_K)(x))^{q's'}\, d\gamma(x)\right)^{1/{s'}}\\ &=\|S_{q',\alpha,\beta}(g_K)^{q'}\|_{L^{s'}(\gamma)}.
 \end{align*}
 Now, let us calculate the last norm on $L^{s'}(\gamma)$ through its dual, that is, 
 \begin{align*}
&\|S_{q',\alpha,\beta}(g_K)^{q'}\|_{L^{s'}(\gamma)}   = \underset{\|\phi\|_{L^{s}(\gamma)}\le 1}{\sup}\int_{\mathbb{R}^n}(S_{q',\alpha, \beta}(g_K)(x))^{q'}\, \phi(x)\, d\gamma(x)\\ &= \underset{\|\phi\|_{L^{s}(\gamma)}\le 1}{\sup}\iint_{\mathbb{R}^{n+1}_+}|g_k(y,t)|^{q'}\, \\ & \qquad \qquad \ \ \ \frac{1}{\gamma(B(y,\alpha t\wedge m_\beta(y)))}\int_{B(y, \alpha t\wedge m_\beta(y))} \phi(x)\, d\gamma(x)\, d\gamma(y)\frac{dt}{t}\\ &=: \underset{\|\phi\|_{L^{s}(\gamma)}\le 1}{\sup}\iint_{\mathbb{R}^{n+1}}|g_K(y,t)|^{q'}\, M_{\alpha, \beta}(\phi)(y,t)\, d\gamma(y)\frac{dt}{t}\\ &= \underset{\|\phi\|_{L^{s}(\gamma)}\le 1}{\sup} \iint_{\mathbb{R}^{n+1}_+} \mathrm{sgn} (g(y,t)) |g_K(y,t)|^{q'-1} M_{\alpha,\beta} (\phi)(y,t)\, g(y,t)\, d\gamma(y)\frac{dt}{t}\\ &= \underset{\|\phi\|_{L^{s}(\gamma)}\le 1}{\sup} \iint_{\mathbb{R}^{n+1}_+} f_\phi(y,t)\, g(y,t)\, d\gamma(y)\frac{dt}{t},
 \end{align*}
 where $f_\phi(y,t)=\mathrm{sgn}(g(y,t)) |g_K(y,t)|^{q'-1}M_{\alpha,\beta}\phi(y,t),$ with $\text{supp}(f_\phi)\subseteq K.$ Let us prove that $f_\phi\in T^{p,q}_{\alpha, \beta}(\gamma)$ showing that there exists a constant $C=C_{n,\beta}>0$ independent of $K$ and $\phi$ such that $\|f_\phi\|_{T^{p,q}_{\alpha, \beta}(\gamma)}\le C\, \|\phi\|_{L^s(\gamma)}\, \|g_K\|_{T^{p',q'}_{\alpha, \beta}(\gamma)}^{q'-1},$ and this last norm is finite, taking into account that $g_K\in L^{q'}_c(\mathbb{R}^{n+1}_+,d\gamma\frac{dt}{t})$ and the proof of the Lemma \ref{density}. 
In fact,
\begin{align*}
\|f_\phi\|_{T^{p,q}_{\alpha, \beta}(\gamma)} &=\left(\int_{\mathbb{R}^n}(S_{q,\alpha,\beta}(f_\phi)(x))^p\, d\gamma(x)\right)^{1/p}\\ &=\left(\int_{\mathbb{R}^n}\left(\iint_{\Gamma^{\alpha, \beta}_\gamma(x)}\frac{|g_K(y,t)|^{q(q'-1)}}{\gamma(B(y, \alpha t\wedge m_\beta(y)))}\, \right. \right.\\ &\left. \qquad \left. (\mathcal{X}_{B(y,\alpha t\wedge m_\beta(y))}(x) |M_{\alpha, \beta}(\phi)(y,t)|)^q\, d\gamma(y)\frac{dt}{t}\right)^{p/q}\, d\gamma(x)\right)^{1/p},
\end{align*}
 and $\mathcal{X}_{B(y, \alpha t\wedge m_\beta(y))}(x)|M_{\alpha, \beta}(\phi)(y,t)|\le \mathcal{M}_\beta(\phi)(x)$. Since $\mathcal{M}_\beta$ is bounded on $L^s (\gamma),$
  \begin{align*}
\|f_\phi\|_{T^{p,q}_{\alpha, \beta}(\gamma)}&\le \left(\int_{\mathbb{R}^n}\mathcal({M}_\beta(\phi)(x))^p (S_{q',\alpha, \beta}(g_K)(x))^{(q'-1)p}\, d\gamma(x)\right)^{1/p}\\ &\le \|\mathcal{M}_\beta(\phi)\|_{L^s(\gamma)}\, \|S_{q',\alpha,\beta}(g_K)^{(q'-1)}\|_{L^r(\gamma)}\\
&\lesssim_{n,\beta}\, \|\phi\|_{L^s(\gamma)}\, \|S_{q',\alpha, \beta}(g_K)\|_{L^{p'}(\gamma)}^{q'-1},
 \end{align*}
 with $\frac{1}{s}+\frac{1}{r}=\frac{1}{p},$ $s'=\frac{p'}{q'}>1,$ and $r=\frac{p'}{q'-1}.$

 Bringing all things together, we conclude that
 \begin{align*}\|g_K\|^{q'}_{T^{p',q'}_{\alpha, \beta}(\gamma)} &\le \underset{\|\phi\|_{L^s(\gamma)}\le 1}{\sup}l(f_\phi)\le \|l\|
 \underset{\|\phi\|_{L^s(\gamma)}\le 1}{\sup}\|f_\phi\|_{T^{p',q'}_{\alpha, \beta}(\gamma)} \\
 &\lesssim_{n,\beta}\, \|l\|\, \|g_K\|_{T^{p',q'}_{\alpha, \beta}(\gamma)}^{q'-1}.\end{align*}
 Therefore, inequality (\ref{acotacionTdual}) holds for every compact subset $K$ of $\mathbb{R}^{n+1}_+,$ and by the Beppo-Levy theorem $g\in T^{p',q'}_{\alpha, \beta}(\gamma)$. On the other hand, the formula (\ref{formularepresentacion}) valid for every $f\in T^{p,q}_{\alpha, \beta}(\gamma)$ with $\text{supp}(f)$ compact in $\mathbb{R}^{n+1}_+$ can be extended to the whole $T^{p,q}_{\alpha, \beta}(\gamma)$ since the space of the former functions is dense in the latter one. So, the conclusion of this theorem for $1<p<q$ is fully proved.

 For the case $q<p<\infty,$ then $p'<q'$ and we are in the previous case where $p, q$ are replaced by $p', q'$ respectively. So, we know that the dual of $T^{p',q'}_{\alpha, \beta}(\gamma)$ is identified with the Banach space $T^{(p')', (q')'}_{\alpha, \beta}(\gamma)=T^{p,q}_{\alpha, \beta}(\gamma).$ Hence the dual of $T^{p,q}_{\alpha, \beta}$
 is identified with the double dual of $T^{p',q'}_{\alpha, \beta}(\gamma).$ Thus, it suffices to show that the space $T^{p',q'}_{\alpha, \beta}(\gamma)$ is reflexive. Instead of using $p', q'$, we shall use $p, q$ such that $1<p<q<\infty$ and we will prove that the space $T^{p,q}_{\alpha, \beta}(\gamma)$ is a reflexive space. Taking into account the Eberlein-\v{S}mulyan theorem \cite{yo},  
 it suffices to show that the unit ball of $T^{p,q}_{\alpha, \beta}(\gamma)$ is sequentially weakly-compact, that is, given a sequence $\{f_j\}_{j\in {\mathbb N}}$ with $\|f_j\|_{T^{p,q}_{\alpha,\beta}(\gamma)}\le 1,\ \ \hbox{ for any } j\in {\mathbb N},$ we are to find a subsequence $\{f_{j_k}\}_{k\in {\mathbb N}}$ such that $\{l(f_{j_k})\}_{k\in {\mathbb N}}$ is a Cauchy sequence in $\mathbb{R}$ for every $l$ continuous linear functional on $T^{p,q}_{\alpha, \beta}(\gamma).$ Let $l$ be such a functional, then there exists $g\in T^{p',q'}_{\alpha, \beta}(\gamma)$ such that 
 $$l(f)=\langle f, g\rangle:=\iint_{\mathbb{R}^{n+1}_+}f(y,t)\, g(y,t)\, d\gamma(y)\frac{dt}{t},\ \ \ \hbox{ for any } f\in T^{p,q}_{\alpha,\beta}(\gamma).$$ Fix $\{K_m\}_{m\in {\mathbb N}}$ an increasing sequence of compact subsets of $\mathbb{R}^{n+1}_+$ which exhausts $\mathbb{R}^{n+1}_+.$ Set $f_j^m:=f_j\mathcal{X}_{K_m}$, then, according to (\ref{mm}), 
 $\{f_j^m\}_{j\in {\mathbb N}}$ is a bounded sequence on $L^{q}(K_m,d\gamma\frac{dt}{t})$ a reflexive space, so there exists a subsequence of $\{f_j^m\}_{j\in {\mathbb N}}$ which is weakly convergent on $L^{q}(K_m, d\gamma\frac{dt}{t}).$ With standard arguments, we can select a subsequence $\{f_{j_k}\}_{k\in {\mathbb N}}$ of $\{f_j\}_{j\in {\mathbb N}}$ so that it is weakly convergent on $L^q(K,d\gamma\frac{dt}{t})$, for each compact set $K\subset \mathbb{R}^{n+1}_+.$ Now, 
 define $g_m:=g\mathcal{X}_{K_m}.$ Given $\epsilon>0,$ and taking into account the proof of Lemma \ref{density}, there exists $m\in {\mathbb N}$ such that $\|g-g_m\|_{T^{p',q'}_{\alpha, \beta}(\gamma)}<\epsilon$ with $g_m\in L^{q}(K_m,d\gamma\frac{dt}{t}).$ Thus,
 \begin{align*}
|l(f_{j_i})-l(f_{j_k})|&\le |\langle f_{j_i}-f_{j_k}, g-g_m\rangle |+\langle f_{j_i}-f_{j_k}, g_m\rangle|\\ &\le \|f_{j_i}-f_{j_k}\|_{T^{p,q}_{\alpha, \beta}(\gamma)}\, \|g-g_m\|_{T^{p',q'}_{\alpha, \beta}(\gamma)}+ |\langle f_{j_i}, g_m\rangle - \langle f_{j_k}, g_m\rangle |\\ &< 2\epsilon + 
|\langle f_{j_i}, g_m\rangle - \langle f_{j_k}, g_m\rangle |.
 \end{align*}
Since $\{f_{j_k}\}_{k\in {\mathbb N}}$ is weakly convergent on $L^{q}(K_m,d\gamma\frac{dt}{t}),$ $\langle f_{j_i}-f_{j_k},g_m\rangle \to 0,$ as $i,k\to \infty,$ this proves the weak convergence and concludes the proof of the theorem.

 \section{Proof of Section \ref{new}}\label{independence}

 \subsection{Proof of Theorem \ref{independencia}}\label{car3proof}
 First, a remark that will be used in the proof of Theorem \ref{independencia}.
 \begin{remark}
For every $B\in\mathscr{B}_\beta,$ \begin{equation}\label{calculobola}
e^{-(2+\beta)\beta} \, e^{-|c_B|^2} \, |B|\le \gamma(B)\le e^{(2+\beta)\beta}\, e^{-|c_B|^2}\, |B|,
\end{equation}
where $|B|$ means the Lebesgue measure of $B,$ i.e., $|B|=\omega_n\, r_B^n$ being $\omega_n$ the Lebesgue measure of the unit ball. So, inequality (\ref{calculobola}) will be denoted by $\gamma (B)\sim_{n,\beta}\, e^{-|c_B|^2}\, r_B^n.$
\end{remark}
 \begin{proof}
 It is immediate for $p=q$, since $$\|f\|_{T^{p,p}_{\alpha, \beta}(\gamma)}=\left(\iint_{\mathbb{R}^{n+1}_+}|f(y,t)|^p\, d\gamma(y)\frac{dt}{t}\right)^{1/p}=\|f\|_{T^{p,p}_{\delta,\lambda}(\gamma)}.$$ So, we assume $p\ne q.$
 
First we fix $\beta>0$, it suffices to prove the following one-way inequality. \begin{equation}\label{unsentido}\|f\|_{T^{p,q}_{\alpha, \beta}(\gamma)}\lesssim_{n, \alpha, \beta, \delta}\, \|f\|_{T^{p,q}_{\delta, \beta}(\gamma)},\ \ \ \ \hbox{ for any }\, \alpha, \delta >0.\end{equation}
{\bf Case $1\le p<\infty.$} 

Let us take $\alpha \le \delta$, then $\Gamma_\gamma^{\alpha, \beta}(x)\subseteq \Gamma_\gamma^{\delta, \beta}(x),$ and $\gamma(B(y,\alpha t\wedge m_\beta(y)))\sim_{n, \alpha, \beta, \delta} \gamma(B(y, \delta t\wedge m_\beta(y))).$ Observe that the inclusion is immediate and with respect to the equivalence, according to (\ref{calculobola}), it suffices to prove that their radii are equivalent. In fact, $\alpha t \wedge m_\beta(y)\le \delta t\wedge m_\beta(y)\le \frac{\delta}{\alpha}(\alpha t\wedge m_\beta(y)).$ From the above remarks we conclude that $S_{q,\alpha, \beta} f(x)\lesssim_{n,\alpha, \beta, \delta}\, S_{q,\delta, \beta} f(x)$ and so the inequality (\ref{unsentido}) follows.

Now, for $\delta < \alpha,$ we analyze two cases $1\le q<p$ and $1\le p<q.$
For $1\le q<p,$ then \begin{align*}
\|f\|_{T^{p,q}_{\alpha, \beta}(\gamma)}&= \|(S_{q,\alpha, \beta}f)^q\|_{L^{\frac{p}{q}}(\gamma)}^{\frac{1}{q}}\\ &=\sup_{
g\ge 0,\, g\in L^{\left(p/q\right)'}(\gamma),\, \|g\|_{\left(p/q\right)', \gamma}\le 1}\left(\int_{\mathbb{R}^n}\, (S_{q,\alpha, \beta}\, f(x))^q\, g(x)\, d\gamma(x)\right)^{1/q}.
\end{align*}
In what follows, the well-known functions $$M_{\alpha, \beta}\, g(y,t)=\frac{1}{\gamma(B(y, \alpha t\wedge m_\beta (y)))}\int_{B(y,\alpha t \wedge m_\beta(y))}\, g(x)\, d\gamma$$ and $$\displaystyle \mathcal{M}_\beta\, g (x)=\sup_{B\in \mathscr{B}_\beta, x\in B}\frac{1}{\gamma(B)}\int_B\, |g(y)|\, d\gamma(y)$$ will be used. On the other hand, observe also that $B(y,\delta t \wedge m_\beta (y))\subset B(y, \alpha t\wedge m_\beta (y))$ and $B(y, \alpha t\wedge m_\beta (y))\in \mathscr{B}_\beta.$
\begin{align*}
&\int_{\mathbb{R}^n} (S_{q,\alpha,\beta}\,  f(x))^q\, g(x)\, d\gamma(x) = \iint_{\mathbb{R}^{n+1}_+}|f(y,t)|^q\, M_{\alpha, \beta}\, g(y,t)\, d\gamma(y)\frac{dt}{t}\\  &= \int_{\mathbb{R}^n}\iint_{\Gamma^{\delta, \beta}_\gamma(x)}\frac{|f(y,t)|^q}{\gamma(B(y, \delta t\wedge m_\beta(y)))}  \mathcal{X}_{B(y, \delta t\wedge m_\beta (y))}(x)\\ & \qquad \qquad M_{\alpha, \beta}\, g(y,t)\, d\gamma(y)\frac{dt}{t}\, d\gamma(x)\\ &\le \int_{\mathbb{R}^n}\, (S_{q,\delta, \beta}\, f(x))^q\, \mathcal{M}_\beta\, g(x)\, d\gamma(x)\\ &\le \|(S_{q,\delta, \beta}\, f)^q\|_{L^{p/q}(\gamma)} \, \|\mathcal{M}_\beta\, g\|_{L^{(p/q)'}(\gamma)}\lesssim_\beta \|f\|^q_{T^{p,q}_{\delta,\beta}(\gamma)}\, \|g\|_{L^{(p/q)'}(\gamma)}.
\end{align*}
Hence, $\|f\|_{T^{p,q}_{\alpha, \beta}(\gamma)}\lesssim_\beta \|f\|_{T^{p,q}_{\delta, \beta}(\gamma)}.$

For $1\le p<q,$ we will use a modified version of Lemma \ref{lema: desigualdad integral, relacion entre eta y beta}.

\begin{lemma}\label{reversefubini1}
There exists $\eta\in (0,1)$ such that for every measurable subset $F$ of $\mathbb R^n$ and every non-negative measurable function $H$ on $\mathbb R^{n+1}_+$, we have
\begin{align}\label{reversefubini}
    \iint_{R^{\alpha,\beta}_\gamma\left(\widetilde{F}_{\beta}^{[\eta]}\right)} & H(y,t) d\gamma(y) \frac{dt}{t}\nonumber \\
    & \lesssim_{n,\alpha, \beta, \delta, \eta} \int_F \left(\iint_{\Gamma_\gamma^{\delta, \beta}(x)} \frac{H(y,t)}{\gamma(B(y,\delta t\wedge m_\beta(y))} d\gamma(y) \frac{dt}{t}\right) d\gamma(x),
\end{align}
being $\widetilde{F}^{[\eta]}_\beta=\{x\in \mathbb{R}^n: \gamma(B\cap F)\ge \eta\, \gamma(B),\ \ \hbox{ for any } B\in \mathscr{B}_\beta\ \, \text{and}\ \, x\in B\}\subset F_\beta^{[\eta]}.$
\end{lemma}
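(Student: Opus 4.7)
\textit{Proof proposal.} My plan is to follow the blueprint of Lemma~\ref{lema: desigualdad integral, relacion entre eta y beta}. First I would apply Fubini's theorem to the right-hand side of \eqref{reversefubini}, rewriting it as
\[
\iint_{\mathbb{R}^{n+1}_+}\frac{\gamma(F\cap B(y,\delta t\wedge m_\beta(y)))}{\gamma(B(y,\delta t\wedge m_\beta(y)))}\,H(y,t)\,d\gamma(y)\,\frac{dt}{t}.
\]
Since $H\ge 0$ and the left-hand side integrates $H$ only over $R^{\alpha,\beta}_\gamma(\widetilde{F}^{[\eta]}_\beta)$, the task reduces to producing a uniform pointwise lower bound
\[
\frac{\gamma(F\cap B(y,\delta t\wedge m_\beta(y)))}{\gamma(B(y,\delta t\wedge m_\beta(y)))}\ \gtrsim_{n,\alpha,\beta,\delta,\eta}\ 1 \qquad \text{for every } (y,t)\in R^{\alpha,\beta}_\gamma(\widetilde{F}^{[\eta]}_\beta).
\]

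Write $\rho:=\alpha t\wedge m_\beta(y)$ and $r:=\delta t\wedge m_\beta(y)$. If $(y,t)\in R^{\alpha,\beta}_\gamma(\widetilde{F}^{[\eta]}_\beta)$, there exists $x\in\widetilde{F}^{[\eta]}_\beta$ with $|x-y|<\rho$. The key observation is that the ball $B(y,\rho)$ (which is centered at $y$, not at $x$) belongs to $\mathscr{B}_\beta$ because its radius is at most $m_\beta(y)$, and it contains $x$; therefore the non-centered density property defining $\widetilde{F}^{[\eta]}_\beta$ immediately yields
\[
\gamma(F\cap B(y,\rho))\ \ge\ \eta\,\gamma(B(y,\rho)).
\]
This is the step where the strengthened set $\widetilde{F}^{[\eta]}_\beta$, in contrast with the centered density set $F^{[\eta]}_\beta$, is indispensable.

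Next I would pass from $B(y,\rho)$ to the target ball $B(y,r)$ by means of Gaussian doubling on admissible balls. Both balls lie in $\mathscr{B}_\beta$ with the same center, and the ratio of their radii is bounded by $\max(\alpha/\delta,1)$, so \eqref{calculobola} gives $\gamma(B(y,\rho\vee r))\le D\,\gamma(B(y,\rho\wedge r))$ for some constant $D=D_{n,\alpha,\beta,\delta}$. When $\rho\le r$, $B(y,\rho)\subseteq B(y,r)$ and the lower bound $\eta/D$ on the density ratio is immediate. When $\rho>r$, writing $A:=B(y,\rho)\setminus B(y,r)$ so that $\gamma(A)\le(D-1)\gamma(B(y,r))$, the decomposition
\begin{align*}
\gamma(F\cap B(y,r)) &= \gamma(F\cap B(y,\rho))-\gamma(F\cap A)\\
&\ge \eta\,\gamma(B(y,\rho))-\gamma(A)\\
&\ge \bigl[\eta-(1-\eta)(D-1)\bigr]\,\gamma(B(y,r))
\end{align*}
delivers the bound provided one fixes $\eta\in(1-1/D,\,1)$, which makes the constant $c:=\eta-(1-\eta)(D-1)$ strictly positive.

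The main obstacle will be precisely the regime $\rho>r$: the target ball $B(y,r)$ need not contain $x$, so the density information supplied by $\widetilde{F}^{[\eta]}_\beta$ cannot be invoked directly on $B(y,r)$. The remedy sketched above enlarges to $B(y,\rho)$, which is still admissible, still contains $x$, and whose measure is comparable to that of $B(y,r)$, and then discounts the contribution coming from the annulus $A$. This discount is only affordable when $\eta$ is pushed sufficiently close to $1$ relative to the doubling constant $D$, which forces $\eta$ to be chosen in a manner depending on $n,\alpha,\beta,\delta$, exactly as reflected in the statement.
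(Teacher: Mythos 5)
Your proposal is correct and follows essentially the same route as the paper: Fubini to reduce to a uniform lower bound on the density quotient, the observation that $B(y,\alpha t\wedge m_\beta(y))$ is an admissible ball containing the point $x\in\widetilde{F}^{[\eta]}_\beta$ (so the non-centered density hypothesis applies to it directly), and then a subtraction of the annulus between the two concentric balls, controlled by the comparability $\gamma(B(y,\alpha t\wedge m_\beta(y)))\le D\,\gamma(B(y,\delta t\wedge m_\beta(y)))$ from \eqref{calculobola}, with $\eta$ pushed above $1-1/D$. Your explicit treatment of the case $\alpha t\wedge m_\beta(y)\le \delta t\wedge m_\beta(y)$ is a minor addition the paper leaves implicit (it works in the regime $\delta<\alpha$), but the argument is otherwise the same.
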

\begin{proof}
Applying Fubini's theorem, we obtain
\begin{align*}
\int_F & \left(\iint_{\Gamma_\gamma^{\delta, \beta}(x)}\, \frac{H(y,t)}{\gamma(B(y,\delta t \wedge m_\beta (y)))}\right) d\gamma(x) \\& =\iint_{\mathbb{R}^{n+1}_+}\, \frac{\gamma(F\cap B(y, \delta t \wedge m_\beta (y)))}{\gamma(B(y, \delta t \wedge m_\beta (y)))}\, H(y,t)\, d\gamma(y)\frac{dt}{t}.
\end{align*}
Similarly to that we have done in the proof of Lemma \ref{lema: desigualdad integral, relacion entre eta y beta}, let $(y,t)\in R_\gamma^{\alpha, \beta}\left(\widetilde{F}^{[\eta]}_\beta\right)$, then there exists $x\in \widetilde{F}_\beta^{[\eta]}$ such that $|y-x|<\alpha t\wedge m_\beta(y),$ i.e., $x\in B(y,\alpha t\wedge m_\beta (y)),$ and $\gamma(F\cap B(y, \alpha t\wedge m_\beta (y)))\ge \eta \, \gamma(B(y, \alpha t\wedge m_\beta(y))).$  Taking into account (\ref{calculobola}),
\begin{align*}
\gamma(B(y, \delta t\wedge m_\beta(y))) &\ge e^{-(2+\beta)\beta} \omega_n (\delta t\wedge m_\beta(y))^n e^{-|y|^2}\\ & \ge \left(\frac{\delta}{\alpha}\right)^n e^{-(2+\beta)\beta} \, \gamma(B(y, \alpha t\wedge m_\beta (y)))\\
& := c_{n, \alpha, \beta, \delta}\, \gamma(B(y, \alpha t\wedge m_\beta(y))).
\end{align*}
Notice $c_{n, \alpha, \beta, \delta}<1.$
Thus, \begin{align*}\gamma(B(y, \alpha t\wedge m_\beta (y))\setminus B(y,\delta t\wedge m_\beta (y)))&\le (1-c_{n, \alpha, \beta, \delta})\gamma (B(y, \alpha t\wedge m_\beta (y)))\\ &  :=\widetilde{c}_{n, \alpha,\beta, \delta}\  \gamma(B(y,\alpha t\wedge m_\beta (y))).
\end{align*}
However, \begin{align*}
\gamma (F\cap B(y, \delta t\wedge m_\beta (y)))&\ge \gamma(F\cap B(y, \alpha t\wedge m_\beta (y))) \\
& \;\;\; - \gamma(B(y, \alpha t\wedge m_\beta (y))\setminus B(y,\delta t\wedge m_\beta (y)))\\ & \ge (\eta- \widetilde{c}_{\alpha, \beta, \delta })\, \gamma(B(y,\alpha t\wedge m_\beta(y)))\\ &\ge (\eta- \widetilde{c}_{\alpha, \beta, \delta })\, \gamma(B(y,\delta t\wedge m_\beta(y))).
\end{align*}
Hence, if $\eta$ is chosen sufficiently close to $1,$ then on $R_\gamma^{\alpha,\beta}\left(\widetilde{F}_\beta^{[\eta]}\right),$ the quotient $\frac{\gamma(F\cap B(y, \delta\wedge m_\beta (y)))}{\gamma(B(y,\delta t\wedge m_\beta (y)))}\ge \lambda$ for $\lambda=\eta-\widetilde{c}_{n,\alpha,\beta, \delta}$ in $(0,1),$ and the inequality (\ref{reversefubini}) holds.
\end{proof}
We now turn to the proof of (\ref{unsentido}). Fix $\lambda >0,$ and let $F_\lambda=\{x\in \mathbb{R}^n: S_{q, \delta, \beta}f(x)\le \lambda\},$ $O_\lambda=\mathbb{R}^n\setminus F_\lambda= \{x\in \mathbb{R}^n: S_{q, \delta, \beta}f(x)> \lambda\}.$ Now, we take $\eta$ from Lemma \ref{reversefubini1} and consider the closed set $(\widetilde{F}_\lambda)_\beta^{[\eta]}\subset F_\lambda,$ and define $O_{\lambda, \beta}^{[\eta]}:=\mathbb{R}^n\setminus (\widetilde{F}_\lambda)_\beta^{[\eta]}.$ We apply Fubini's theorem to obtain the following inequality.
$$\int_{(\widetilde{F}_\lambda)_\beta^{[\eta]}}(S_{q,\alpha, \beta}f(x))^q\, d\gamma(x)\le \iint_{R^{\alpha, \beta}_\gamma((\widetilde{F}_\lambda)_\beta^{[\eta]})}|f(y,t)|^q\, d\gamma(y)\frac{dt}{t}.$$
Next, we apply Lemma \ref{reversefubini1} with $H(y,t)=|f(y,t)|^q,$ and obtain,
$$\int_{(\widetilde{F}_\lambda)_\beta^{[\eta]}}(S_{q,\alpha, \beta}f(x))^q\, d\gamma(x)\lesssim_{n,\alpha,\beta,\delta,\eta}\int_{F_\lambda} (S_{q,\delta, \beta}f(x))^q\, d\gamma(x).$$
It follows from this that
$$\gamma\{x\in \mathbb{R}^n: S_{q,\alpha, \beta}f(x)>\lambda\}\le \gamma(O_{\lambda, \beta}^{[\eta]})+\frac{C_{n,\alpha,\beta,\delta, \eta}}{\lambda^q}\int_{F_\lambda} (S_{q,\delta, \beta}f(x))^q\, d\gamma(x).$$
However, since $O_{\lambda, \beta}^{[\eta]}=\{x\in \mathbb{R}^n : \mathcal{M}_\beta (\mathcal{X}_{O_\lambda})(x)>\frac{1}{1-\eta}\}$, then  $\gamma(O_\beta^{[\eta]})\le C_{\beta, \eta}\, \gamma(O),$ for $\mathcal{M}_\beta$ is of weak-type $(1,1)$ with respect to $\gamma,$ see \cite{ur}. Together, this implies that
\begin{align}\label{desigualdadfunciondensidad}
\gamma(\{x\in \mathbb{R}^n: S_{q,\alpha, \beta}f(x)>\lambda\})&\lesssim \gamma(\{x\in \mathbb{R}^n: S_{q,\delta, \beta}f(x)>\lambda\})+ \nonumber\\&\qquad \frac{1}{\lambda^q}\int_{\{x\in \mathbb{R}^n: S_{q,\delta, \beta}f(x)\le \lambda\}}(S_{q,\delta,\beta}f(x))^q\, d\gamma(x).
\end{align}
If we multiply both sides of (\ref{desigualdadfunciondensidad}) by $\lambda^{p-1}$ and integrate we then get the inequality (\ref{unsentido}) for $1\le p<q.$

{\bf Case $q=\infty,$ $1< p<\infty.$}

For $\alpha\le \delta, $ we have $\Gamma_\gamma^{\alpha, \beta}(x)\subseteq \Gamma_{\gamma}^{\delta, \beta}(x)$, hence $S_{\infty,\alpha,\beta}\, f(x)\le S_{\infty,\delta, \beta}\, f(x)$ for all $x\in \mathbb{R}^n$, so inequality (\ref{unsentido}) holds.

For $\delta <\alpha$, we shall prove that \begin{equation}\label{desigualdadunsentido}S_{\infty,\alpha, \beta}\, f(x)\lesssim_{\alpha, \beta, \delta}\mathcal{M}_\beta (S_{\infty, \delta, \beta}\, f)(x)\end{equation}for all $x\in \mathbb{R}^n$, and so inequality (\ref{unsentido}) will follow also since the non-centered Gaussian Hardy-Littlewood maximal function on $\mathscr{B}_\beta$ is of $\gamma$-strong type $(p,p)$ for $1<p<\infty.$

Let $(y,t)\in \Gamma_\gamma^{\alpha,\beta}(x) $ be given. For all $z\in B(y,\delta t\wedge m_\eta (y))$, $(y,t)\in \Gamma_\gamma^{\delta, \beta}(z)$ and $|f(y,t)|\le S_{\infty, \delta, \beta}\, f(z).$ Thus,
\begin{align*}
|f(y,t)|&\le \inf_{z\in B(y,\delta t\wedge m_\beta (y))}\, S_{\infty, \delta, \beta}\, f(z)\\ &\le \frac{1}{\gamma(B(y,\delta t\wedge m_\beta (y)))}\int_{B(y, \delta t\wedge m_\beta (y))}\, S_{\infty, \delta, \beta}\, f(z)\, d\gamma(z).
\end{align*}
Taking into account that $\gamma(B(y,\delta t\wedge m_\beta (y)))\ge e^{-(\beta+2)\beta}\left(\frac{\delta}{\alpha}\right)^n\, \gamma(B(y, \alpha t\wedge m_\beta(y))),$ $\delta< \alpha$ and $x\in B(y,\alpha t\wedge m_\beta (y,))$ we get $|f(y,t)|\lesssim_{\alpha, \beta, \delta}\mathcal{M}_\beta (S_{\infty, \delta,\beta}\, f)(x),$ for all $(y,t)\in \Gamma_\gamma^{\alpha, \beta}(x),$ and hence inequality (\ref{desigualdadunsentido}) follows.

So far, we have proved that for $\beta>0$ fixed, we have $T_{\alpha, \beta}^{p,q}(\gamma)=T_{\delta, \beta}^{p,q}(\gamma)$ for all $\alpha, \delta>0.$ Now we fix $\alpha>0,$ and let $\beta$ vary, we shall obtain the equality $T^{p,q}_{\alpha, \beta}(\gamma)=T_{\alpha,\lambda}^{p,q}(\gamma)\ \hbox{ for any } \beta,\lambda>0$ if we apply the same procedure as before when the aperture of the Gaussian cones varies.
Therefore, $T^{p,q}_{\alpha, \beta}(\gamma)=T^{p,q}_{\delta, \lambda}(\gamma)$ for $1\le p,q<\infty$ and for $q=\infty$ and $1<p<\infty,$ for all $\alpha,\beta, \delta, \lambda>0.$ 

{\bf Case $p=\infty$ and $1<q<\infty$.} 

From Theorem \ref{car2}, $T^{\infty, q}_{\alpha,\beta}(\gamma)$ can be identified with the dual of $T^{1,q'}_{\alpha, \beta}(\gamma)$ and the same is true for $T^{\infty, q}_{\delta, \lambda}(\gamma)$ with the dual of $T^{1,q'}_{\delta, \lambda}(\gamma).$ But we have just proved that $T^{1,q'}_{\alpha,\beta}(\gamma)=T_{\delta, \lambda}^{1, q'}(\gamma),$ then $T^{\infty, q}_{\alpha,\beta}(\gamma)=T^{\infty, q}_{\delta, \lambda}(\gamma).$

That is, for each $p$ and $q$ there is only one Gaussian tent space that is symbolized by $T^{p,q}(\gamma).$
 \end{proof}
\addcontentsline{toc}{section}{Appendix}

\end{document}